\documentclass{article}
\usepackage[utf8]{inputenc}
\usepackage{mathrsfs}
\usepackage{mathtools}
\usepackage[normalem]{ulem}
\usepackage[colorinlistoftodos]{todonotes}
\usepackage{biblatex}
\usepackage{tikz}
\usepackage{graphicx}
\usepackage{tikz-cd}
\usepackage{tikz}
\usepackage{bbm}

\usepackage{dsfont}
\usepackage{url}
\usepackage{hyperref}
\usepackage{amsmath}
\usepackage{amssymb}
\usepackage{mathrsfs}
\usepackage{braket}
\usepackage{mathtools}
\usepackage{booktabs}
\usepackage{caption}
\usepackage{cancel}
\usepackage{epstopdf}
\usepackage{comment}
\usepackage{appendix}

\DeclarePairedDelimiter{\norm}{\lVert}{\rVert}
\newcommand{\numberset}{\mathbb}
\newcommand{\N}{\numberset{N}}
\newcommand{\R}{\numberset{R}}

\newcommand{\Z}{\numberset{Z}}

\newcommand{\sphere}{\mathbb{S}}

\newcommand{\supp}{\text{supp}}
\usepackage{amsthm}
\newtheorem{thm}{Theorem}[section]

\newtheorem{lem}[thm]{Lemma}
\newtheorem{prp}[thm]{Proposition}

\newtheorem{defn}[thm]{Definition}

\newtheorem{rem}[thm]{Remark}
\newtheorem{es}{Example}
\newtheorem{exo}{Exercise}

\makeatletter
\newcommand*{\@old@slash}{}\let\@old@slash\slash
\def\slash{\relax\ifmmode\delimiter"502F30E\mathopen{}\else\@old@slash\fi}
\makeatother

\makeatletter
\def\bign#1{\mathclose{\hbox{$\left#1\vbox to8.5\p@{}\right.\n@space$}}\mathopen{}}
\def\Bign#1{\mathclose{\hbox{$\left#1\vbox to11.5\p@{}\right.\n@space$}}\mathopen{}}
\def\biggn#1{\mathclose{\hbox{$\left#1\vbox to14.5\p@{}\right.\n@space$}}\mathopen{}}
\def\Biggn#1{\mathclose{\hbox{$\left#1\vbox to17.5\p@{}\right.\n@space$}}\mathopen{}}
\makeatother

\usepackage{todonotes}
\usepackage{float}
\theoremstyle{remark}

\renewcommand{\phi}{\varphi}

\title{The completion of the set of Lagrangians \\
and applications to dynamics \\ \Large  Based on lectures by C. Viterbo}
\author{Olga Bernardi\footnote{Dipartimento di Matematica Tullio Levi--Civita, Università di Padova, via Trieste 63, 35121 Padova, Italy. Email: obern@math.unipd.it,}, Francesco Morabito\footnote{D-MATH, ETH, R\"amistrasse 101, 8092, Zurich, Switzerland. Email: francesco.morabito@math. ethz.ch}}

\begin{document}

\maketitle
\section{Introduction}
The goal of these lectures is to introduce the completion of the set of Lagrangian submanifolds of a symplectic manifold with respect to the spectral metric first introduced by V. Humilière in \cite{hum08} and recently revisited in \cite{vit22}.  We establish a number of basic properties of this completion, in particular through the notion of $\gamma$-support, which we develop as a refinement of Humilière's original concept. We then present an application of these notions to conformally symplectic dynamics, generalizing the notion of Birkhoff attractor as defined and studied by G.D. Birkhoff, M. Charpentier, and more recently P. LeCalvez (see \cite{bir32, cha34, lec88}). Finally, we briefly mention several other applications of the Humilière completion and highlight many open questions.

These are notes elaborated from the lectures with the same title given at the CIME School ``Symplectic Dynamics and Topology'' held in Cetraro (CS) Italy from 16 to 20 June 2025. The authors thank Claude Viterbo and Marco Mazzucchelli for clarifying discussions while writing down these notes. It has been a pleasure to take part in this project.

C. Viterbo  warmly thanks the organizers of the CIME conference, Olga Bernardi, Anna Florio, Marco Mazzucchelli and Alfonso Sorrentino, for the perfect organization in a paradisiac location as well as Olga Bernardi and Francesco Morabito who undertook the daunting task of turning a confusing expositions into readable lecture notes. 
The reader should be aware that any shortcomings in these notes are the sole responsibility of the lecturer.

\subsection{Notations}
\begin{itemize}
\item $\mathfrak{L}(M, -d\lambda) = \mathfrak{L}(M)$ is the set of closed, connected, exact Lagrangians in $(M, -d\lambda)$. 
\item $\mathscr{L}(M, -d\lambda) = \mathscr{L}(M) := \Set{(L, f_L): L \in \mathfrak{L}(M) \text{ and } \lambda\vert_{L} = df_L }$ is the set of Lagrangian branes or simply branes. 
\item $\mathfrak{L}_0(T^*N)$ is the set of Lagrangians in $\mathfrak{L}(T^*N)$ which are Hamiltonian isotopic to the zero section of $T^*N$. 
\item ${\mathscr L}_0(T^*N)$ is the set of branes over an element of $\mathfrak L_0(T^*N)$.
\item $\mathrm{DHam}(M)$ is the set of time-1 flows of Hamiltonian vector fields on $(M, -d\lambda)$.
\item $\mathrm{DHam}_c(M)$ the set of time-1 flows of Hamiltonian vector fields for compactly supported Hamiltonians on $(M, -d\lambda)$. 
\item $\widehat{\mathfrak{L}_0}(T^*N)$ is the completion of the metric space $(\mathfrak{L}_0(T^*N),\gamma)$. 
\item $\widehat{\mathscr{L}_0}(T^*N)$ is the completion of the metric space $(\mathscr{L}_0(T^*N),c)$.
\item $\widehat{\mathfrak{L}_c}(T^*N)$ is the set of elements in $\widehat{\mathfrak{L}_0}(T^*N)$ with compact $\gamma$-support.
\item $\widehat{\mathscr{L}_c}(T^*N)$ is the set of elements in $\widehat{\mathscr{L}_0}(T^*N)$ with compact $c$-support.
\item $B_0 = B_0(\psi)$ is the attractor of Definition \ref{Conley}.
\item $B = B(\psi)$ is the Birkhoff attractor in $T^*\mathbb{S}$.
\item $B_{\infty} = B_{\infty}(\psi)$ is the generalized Birkhoff attractor in $T^*N$.
\end{itemize}

\section{Preliminaries} \label{PRE}
\begin{center}
\begin{minipage}{10 cm}
\textit{Contents of Section \ref{PRE}.} We first define Liouville exact symplectic manifolds and conformally symplectic vector fields and diffeomorphisms. We then introduce the main objects of our study: Lagrangian submanifolds, Lagrangian branes and their global description by generating functions. We finally recall the main results on existence and uniqueness of generating functions quadratic at infinity for Lagrangian submanifolds.
\end{minipage}
\end{center}
\subsection{Liouville exact symplectic manifolds}
A smooth manifold $M$ of dimension $2n$ is \textit{symplectic} if it is equipped with a closed and non degenerate $2$-form $\omega \in \Omega^2(M)$. A symplectic manifold $(M,\omega)$ is called \textit{exact symplectic} if $\omega = -d\lambda$ where $\lambda$ is called Liouville form. \\
Throughout the whole paper, we will assume that $(M,-d\lambda)$ is a \textit{Liouville manifold}, that is $(M,-d\lambda)$ falls into one of the following two cases : 

\begin{itemize}
\item[$(a)$] $M$ is a compact manifold with boundary $\partial M$ of contact type, that is:
\begin{itemize}
\item[$(a_1)$] $\lambda \in \Omega^1(M)$ is a \textit{contact 1-form} on $\partial M$, which means that $\lambda \wedge (d\lambda)^{n-1}$ does not vanish on $\partial M$.
\end{itemize}
or, equivalently:
\begin{itemize}
\item[$(a_2)$] the \textit{Liouville vector field} $X_\lambda \in X(M)$, defined by $i_{X_\lambda}\omega = \lambda$, is transverse to the boundary $\partial M$. We notice that $X_{\lambda}$ points outwards $\partial M$.
\end{itemize}
\item[$(b)$] $M$ is an open manifold and there exists a sequence of compact submanifolds of codimension 0 with smooth boundary $M_1 \subset M_2 \subset \ldots$ such that
$$M = \bigcup_{i = 1}^{+\infty} M_i$$ 
and every $\partial M_i$ is of contact type, as defined in case $(a)$. Moreover, we require the Liouville vector field $X_\lambda$ to be complete both in backward and forward time. 
\end{itemize}
\begin{rem}
\textnormal{Every compact Liouville manifold $(M,-d\lambda)$ as in $(a)$ provides an open Liouville manifold as in $(b)$ by taking
$$W = M \cup_{\partial M \times \{0\}} (\partial M \times \mathbb{R}_{\geq 0})$$
equipped with the exact symplectic form coinciding with $-d\lambda$ on $M$ and with $-d(e^t\lambda)$ on $\partial M \times \mathbb{R}_{\geq 0}$. In such a case, the sequence of compact subsets introduced in $(b)$ is given by
$$M_i = M \cup_{\partial M \times \{0\}}  (\partial M \times [0,i])\, ,$$
see Figure \ref{W}. We underline that for the surface with boundary $W$ depicted in Figure \ref{W} (a torus without a disc), since $H^2(W; \R)=0$, the area form on $W$ is exact symplectic.}
\end{rem}

\begin{figure}[ht]
  \centering
    %% Creator: Inkscape 1.2.2 (b0a8486541, 2022-12-01), www.inkscape.org
%% PDF/EPS/PS + LaTeX output extension by Johan Engelen, 2010
%% Accompanies image file 'M e W.pdf' (pdf, eps, ps)
%%
%% To include the image in your LaTeX document, write
%%   \input{<filename>.pdf_tex}
%%  instead of
%%   \includegraphics{<filename>.pdf}
%% To scale the image, write
%%   \def\svgwidth{<desired width>}
%%   \input{<filename>.pdf_tex}
%%  instead of
%%   \includegraphics[width=<desired width>]{<filename>.pdf}
%%
%% Images with a different path to the parent latex file can
%% be accessed with the `import' package (which may need to be
%% installed) using
%%   \usepackage{import}
%% in the preamble, and then including the image with
%%   \import{<path to file>}{<filename>.pdf_tex}
%% Alternatively, one can specify
%%   \graphicspath{{<path to file>/}}
%% 
%% For more information, please see info/svg-inkscape on CTAN:
%%   http://tug.ctan.org/tex-archive/info/svg-inkscape
%%
\begingroup%
  \makeatletter%
  \providecommand\color[2][]{%
    \errmessage{(Inkscape) Color is used for the text in Inkscape, but the package 'color.sty' is not loaded}%
    \renewcommand\color[2][]{}%
  }%
  \providecommand\transparent[1]{%
    \errmessage{(Inkscape) Transparency is used (non-zero) for the text in Inkscape, but the package 'transparent.sty' is not loaded}%
    \renewcommand\transparent[1]{}%
  }%
  \providecommand\rotatebox[2]{#2}%
  \newcommand*\fsize{\dimexpr\f@size pt\relax}%
  \newcommand*\lineheight[1]{\fontsize{\fsize}{#1\fsize}\selectfont}%
  \ifx\svgwidth\undefined%
    \setlength{\unitlength}{255.19940474bp}%
    \ifx\svgscale\undefined%
      \relax%
    \else%
      \setlength{\unitlength}{\unitlength * \real{\svgscale}}%
    \fi%
  \else%
    \setlength{\unitlength}{\svgwidth}%
  \fi%
  \global\let\svgwidth\undefined%
  \global\let\svgscale\undefined%
  \makeatother%
  \begin{picture}(1,0.57666124)%
    \lineheight{1}%
    \setlength\tabcolsep{0pt}%
    \put(0,0){\includegraphics[width=\unitlength,page=1]{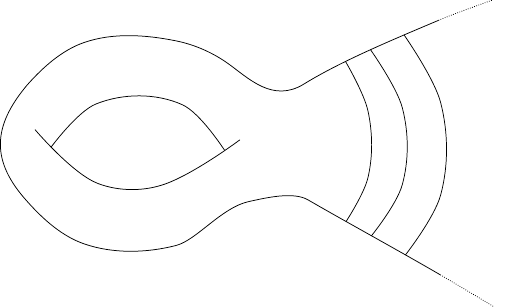}}%
    \put(0.16130692,0.15315305){\color[rgb]{0,0,0}\makebox(0,0)[lt]{\lineheight{1.25}\smash{\begin{tabular}[t]{l}$M$\end{tabular}}}}%
    \put(0.88271254,0.28424109){\color[rgb]{0,0,0}\makebox(0,0)[lt]{\lineheight{1.25}\smash{\begin{tabular}[t]{l}$W$\end{tabular}}}}%
    \put(0.45770446,0.46007609){\color[rgb]{0,0,0}\makebox(0,0)[lt]{\lineheight{1.25}\smash{\begin{tabular}[t]{l}$\partial M \times \{0\}$\end{tabular}}}}%
    \put(0.52214275,0.08785971){\color[rgb]{0,0,0}\makebox(0,0)[lt]{\lineheight{1.25}\smash{\begin{tabular}[t]{l}$\partial M \times \{1\}$\end{tabular}}}}%
    \put(0.60100307,0.52491861){\color[rgb]{0,0,0}\makebox(0,0)[lt]{\lineheight{1.25}\smash{\begin{tabular}[t]{l}$\partial M \times \{2\}$\end{tabular}}}}%
  \end{picture}%
\endgroup%

  \caption{An open Liouville manifold $W = M \cup_{\partial M \times \{0\}} (\partial M \times \mathbb{R}_{\geq 0})$.}
  \label{W}
\end{figure}

\begin{rem} \label{cot bun}
\textnormal{The main example of case $(b)$ is the cotangent bundle $(T^*N,-d\lambda)$ of any closed smooth manifold $N$ with the standard Liouville form $\lambda = pdq$. We observe that, fixed $r > 0$, we obtain a compact Liouville manifold as in $(a)$ by taking
$$D_rT^*N := \Set{(q, p)\in T^*N: \vert p \vert_{g^*} \leq r}\, ,$$
where $g^*$ is the dual metric induced by a Riemannian metric $g$ on $N$.}
\end{rem}

\subsection{CS vector fields and diffeomorphisms}
\begin{defn}
A vector field $Y \in X(M)$ on a symplectic manifold $(M,\omega)$ is conformally symplectic (CS) if 
\begin{equation} \label{CS}
L_Y\omega = \alpha \omega\, ,
\end{equation}
for some $\alpha \in \mathbb{R}$. When $\alpha = 0$, $Y \in X(M)$ is symplectic.
\end{defn}
\noindent The constant $\alpha \in \mathbb{R}$ is called the \textit{conformal rate}. Clearly, the flow $\varphi_Y^t$ of a conformally symplectic vector field $Y$ satisfies
\begin{equation} \label{CSflow}
(\varphi_Y^t)^* \omega = e^{\alpha t} \omega\, .
\end{equation}
We observe that, a more general notion of conformally symplectic vector field would seem to be
$$L_Y \omega = f \omega\, ,$$
for $f \in C^{\infty}(M;\mathbb{R})$. However, such a definition will not be more general unless $n = 1$ because for $n \geq 2$ the function $f$ must necessarily be a constant. Indeed, since $L_Y\omega$ is closed, we have that
$$df \wedge \omega = 0\, .$$
It is then sufficient to observe that, for $n \ge 2$, the wedge product with $\omega$ is (pointwise) injective and therefore $f$ must be a constant. 
\begin{exo}
\textnormal{What is the set of possible $f$ when $n=1$?}
\end{exo}
\noindent Let $(M,-d\lambda)$ be an exact symplectic manifold. Then the Liouville vector field $X_\lambda \in X(M)$, defined by $i_{X_\lambda}\omega = \lambda$, is conformally symplectic with conformal rate $\alpha = -1$ since by Cartan's formula: 
$$L_{X_\lambda} \omega = d \iota_{X_\lambda} \omega + \underbrace{\iota_{X_\lambda} d \omega}_{=0}  = d\lambda = -\omega\, .$$
More generally, $Y \in X(M)$ is a conformally symplectic vector field on an exact symplectic manifold $(M,-d\lambda)$, with conformal rate $\alpha = -1$, if and only if there exists a closed form $\beta \in \Omega^1(M)$ such that
$$Y = X_\lambda + Z\, ,$$
where $Z \in X(M)$ is defined by $\iota_{Z}\omega = \beta$. Again, since $L_{Z}\omega = L_{Y - X_{\lambda}} \omega = 0$, by Cartan's formula:
$$d\iota_Z \omega = L_Z \omega - \underbrace{\iota_{Z} d \omega}_{=0} = 0\, .$$
Setting $\beta := \iota_Z \omega$, the above statement immediately follows.

In particular, if $\beta$ is exact, that is $\beta =dH$, we denote $X_H := X_\beta$ the corresponding Hamiltonian vector field. 
\begin{defn}
A diffeomorphism $\psi \in \mathrm{Diff}(M)$ on a symplectic manifold $(M,\omega)$ is conformally symplectic (CS) if 
\begin{equation} \label{CSdiffeo}
\psi^* \omega = a \omega\, ,
\end{equation}
for some $a > 0$. When $a = 1$, $\psi$ is symplectic.
\end{defn}
\noindent The constant $a \in \mathbb{R}_{> 0}$ is called the \textit{conformal ratio}. Clearly, by (\ref{CSflow}) the flow $\varphi_Y^t$ of every CS vector field of conformal rate $\alpha$ is a CS diffeomorphism of conformal ratio $e^{\alpha t}$.
\begin{defn}
A diffeomorphism $\psi \in \mathrm{Diff}(M)$ on an exact symplectic manifold $(M,-d\lambda)$ is conformally exact symplectic (CES) with respect to $\lambda$ if 
$$\psi^* \lambda - a \lambda = df\, ,$$
that is $\psi^* \lambda - a \lambda$ is (closed and) exact. When $a = 1$, $\psi $ is exact symplectic.
\end{defn}
\begin{rem} $(i)$ \textnormal{Note that  $\omega$ defines $\lambda$ up to a closed $1$-form, and exactness only depends on the choice of $\lambda$ up to the addition of an exact $1$-form: if $\psi \in \mathrm{Diff}(M)$ is CES with respect to $\lambda$, then it is CES with respect to $\lambda+dg$.} \\
$(ii)$ \textnormal{Given an exact symplectic manifold $(M,-d\lambda)$, let $\psi \in \mathrm{Diff}(M)$ be a CS diffeomorphism (for $a \ne 1$), homotopic to the identity. Then there exists a primitive $\lambda_1$ of $-d\lambda$ such that $\psi \in \mathrm{Diff}(M)$ is CES with respect to $\lambda_1$. We refer to \cite[Appendix B Proposition 9]{arnFéj} for the detailed proof of this fact.}
\end{rem}
\begin{es}
\textnormal{Let us consider a conservative system given by a potential $V$, so the Hamiltonian  is $H(q,p)=\frac{1}{2} \vert p\vert^2+V(q)$ to which we add a friction term proportional to the speed. We then get the equations
    \begin{equation}
       \left\{ \begin{array}{cc}
            \dot q =&p  \\
             \dot p =& -\nabla V(q)-\alpha p 
        \end{array}
\right.
    \end{equation}
    The reader can check that the vector field is conformally symplectically exact, with conformal rate $\alpha > 0$. Indeed, this is the sum of a Hamiltonian vector field and the vector field $X_\alpha(q,p)=(0,-\alpha p)$ and $L_{X_\alpha}(-dp\wedge dq)=-di_{X_\alpha}(dp\wedge dq)=-\alpha dp\wedge dq$.}
\end{es}

\subsection{Lagrangian submanifolds and Lagrangian branes} \label{sezione 23}

\noindent Let $(M,\omega)$ be a symplectic manifold of dimension $2n$ and let $V \subset M$ be an embedded submanifold of dimension $k$. For every $z \in V$ we consider the symplectic orthogonal subspace of $T_zV$ at $z$, given by
$$(T_zV)^{\bot} := \set{ v \in T_zM: \omega(u,v) = 0 \text{ for every } u \in T_zV}\, .$$
Since $\omega$ is nondegenerate, $\textnormal{dim}(T_zV)^{\bot} = 2n -k$. We say that
\begin{itemize}
\item[$(i)$] $V$ is isotropic if $T_zV \subset (T_zV)^{\bot}$ for every $z \in V$;
\item[$(ii)$] $L$ is coisotropic if $(T_zV)^{\bot} \subset T_zV$ for every $z \in V$;
\item[$(iii)$] $V$ is Lagrangian if $(T_zV)^{\bot} = T_zV$ for every $z \in V$.
\end{itemize}
Hence \textit{Lagrangians} are maximal isotropic or minimal coisotropic (embedded) submanifolds. Briefly, $L \subset M$ is Lagrangian if 
$$(i)\, \dim(L) = n \qquad \text{and} \qquad (ii)\, \omega\vert_{TL} = 0 \, .$$
In particular, if $(M,-d\lambda)$ is exact symplectic, $L \subset M$ is \textit{exact Lagrangian} if 
$$(i)\, L \text{ is Lagrangian} \qquad \text{and} \qquad (ii)\, \lambda\vert_{L} \text{ is exact}\, .$$
In this case, there exists a primitive function $f_L \in C^{\infty}(L;\mathbb{R})$ of $\lambda$ on $L$, that is $\lambda\vert_{L} = df_L$.  If $L$ is connected, the primitive $f_L$ is unique up to addition of a constant. \\ \\
\noindent In the sequel, the set of closed, connected, exact (embedded) Lagrangians in $(M, -d\lambda)$ will be denoted by $\mathfrak{L}(M, -d\lambda)$.  Moreover we set
\begin{equation}
    \mathscr{L}(M, -d\lambda) := \Set{(L, f_L): \lambda\vert_{L} = df_L }.
\end{equation}

Of course,  there is a ``forgetful'' map
$$\mathscr{L}(M, -d\lambda) \rightarrow \mathfrak{L}(M, -d\lambda)$$
whose fiber over $L \in \mathfrak{L}(M, -d\lambda)$ is the set of all
$$T_c(L,f_L) := (L,f_L + c)$$
for $c \in \mathbb{R}$. We call an element $(L,f_L) \in \mathscr{L}(M, -d\lambda)$ Lagrangian brane or simply brane. 
\begin{es} \textnormal{Consider the case of a cotangent bundle $(T^*N,-d\lambda)$, as in Remark \ref{cot bun}. The graph of a $1$-form $\alpha \in \Omega^1(N)$:
$$G_\alpha = \set{ (q,\alpha(q)): q \in N} \subset T^*N$$
is Lagrangian if and only if $\alpha$ is closed since $\Lambda_{\mid G_{\alpha}}=\alpha$ (see e.g. \cite[Proposition 3.4.2]{mcDSal16}). Moreover $G_\alpha$ is exact Lagrangian if and only if $\alpha$ is exact. In such a case, given a primitive $f$ of $\alpha$, we write $\Gamma_f := G_\alpha$.}
\end{es}
\begin{es} \textnormal{Let $\mathbb{S}=\mathbb{R}/\mathbb{Z}$. We remind that an essential curve in $T^*\mathbb{S}$ is a topological embedding of $\mathbb{S}$ which is not homotopic to a point. Then an essential curve on $(T^*\mathbb{S}, -pdq)$ is exact Lagrangian if and only if the signed area of the portion of cylinder contained between the curve and the zero section $\mathbb{S} \times \{0\}$ is $0$, see Figure \ref{Lag Cil}. This may be justified remarking that the conclusion is obvious if the Lagrangian in $C^1$--close to the zero section, and then extend it to all Lagrangians following the same path as in Exercise \ref{ex:nlc}.} \newline
\end{es}
\begin{figure}[H]
\centering
 \resizebox{0.7\linewidth}{!}{
%% Creator: Inkscape 1.2.2 (b0a8486541, 2022-12-01), www.inkscape.org
%% PDF/EPS/PS + LaTeX output extension by Johan Engelen, 2010
%% Accompanies image file 'cylinderLagrangians.pdf' (pdf, eps, ps)
%%
%% To include the image in your LaTeX document, write
%%   \input{<filename>.pdf_tex}
%%  instead of
%%   \includegraphics{<filename>.pdf}
%% To scale the image, write
%%   \def\svgwidth{<desired width>}
%%   \input{<filename>.pdf_tex}
%%  instead of
%%   \includegraphics[width=<desired width>]{<filename>.pdf}
%%
%% Images with a different path to the parent latex file can
%% be accessed with the `import' package (which may need to be
%% installed) using
%%   \usepackage{import}
%% in the preamble, and then including the image with
%%   \import{<path to file>}{<filename>.pdf_tex}
%% Alternatively, one can specify
%%   \graphicspath{{<path to file>/}}
%% 
%% For more information, please see info/svg-inkscape on CTAN:
%%   http://tug.ctan.org/tex-archive/info/svg-inkscape
%%
\begingroup%
  \makeatletter%
  \providecommand\color[2][]{%
    \errmessage{(Inkscape) Color is used for the text in Inkscape, but the package 'color.sty' is not loaded}%
    \renewcommand\color[2][]{}%
  }%
  \providecommand\transparent[1]{%
    \errmessage{(Inkscape) Transparency is used (non-zero) for the text in Inkscape, but the package 'transparent.sty' is not loaded}%
    \renewcommand\transparent[1]{}%
  }%
  \providecommand\rotatebox[2]{#2}%
  \newcommand*\fsize{\dimexpr\f@size pt\relax}%
  \newcommand*\lineheight[1]{\fontsize{\fsize}{#1\fsize}\selectfont}%
  \ifx\svgwidth\undefined%
    \setlength{\unitlength}{212.5984252bp}%
    \ifx\svgscale\undefined%
      \relax%
    \else%
      \setlength{\unitlength}{\unitlength * \real{\svgscale}}%
    \fi%
  \else%
    \setlength{\unitlength}{\svgwidth}%
  \fi%
  \global\let\svgwidth\undefined%
  \global\let\svgscale\undefined%
  \makeatother%
  \begin{picture}(1,0.73333333)%
    \lineheight{1}%
    \setlength\tabcolsep{0pt}%
    \put(0,0){\includegraphics[width=\unitlength,page=1]{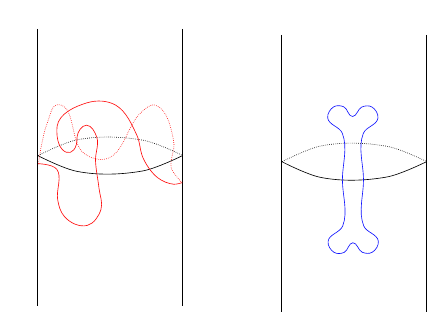}}%
    \put(0.25067862,0.29648718){\color[rgb]{0,0,0}\makebox(0,0)[lt]{\lineheight{1.25}\smash{\begin{tabular}[t]{l}$\mathcal{O}_{\sphere}$\end{tabular}}}}%
    \put(0.66328486,0.29320711){\color[rgb]{0,0,0}\makebox(0,0)[lt]{\lineheight{1.25}\smash{\begin{tabular}[t]{l}$\mathcal{O}_{\sphere}$\end{tabular}}}}%
    \put(0,0){\includegraphics[width=\unitlength,page=2]{cylinderLagrangians.pdf}}%
  \end{picture}%
\endgroup%
}
\caption{On the left, an exact Lagrangian in $(T^*\mathbb{S}, -pdq)$: blue and red areas sum to zero. On the right, a non-essential curve surrounding a non-zero area.}
\label{Lag Cil}
\end{figure}
\noindent Given a Hamiltonian $H \in C^\infty(\mathbb{S} \times M; \mathbb{R})$, its Hamiltonian vector field is defined by 
 $$i_{X_{H_t}} \omega = dH_t\, ,$$
where $H_t(\cdot) := H(t,\cdot)$. \\
\\
\noindent In the sequel, we indicate by $\mathrm{DHam}(M)$ the set of time-1 flows of Hamiltonian vector fields on $(M, -d\lambda)$ and by $\mathrm{DHam}_c(M)$ the set of time-1 flows of Hamiltonian vector fields generated by compactly supported Hamiltonians on $(M, -d\lambda)$. We call the elements in $\mathrm{DHam}(M)$ {\textit{Hamiltonian diffeomorphisms} and the elements in $\mathrm{DHam}_c(M)$  {\textit{compactly supported Hamiltonian diffeomorphisms}. 
It is easy to show that the exactness of a Lagrangian is preserved under Hamiltonian diffeomorphisms, see \cite[Corollary 9.3.6]{mcDSal16}. This means that $\mathrm{DHam}(M)$ sends $\mathfrak{L}(M, -d\lambda)$ into itself. This action lifts to $\mathscr{L}(M, -d\lambda)$ as explained here below. \\
Let $(\varphi^t_H)_{t \in \mathbb{R}}$ be the Hamiltonian flow for $H$. Let $\varphi_H \in \mathrm{DHam}(M)$ be the corresponding time-1 flow. Then
$$(L,f_L) \mapsto(\varphi_H(L), f_{\varphi_H(L)})$$
where
$$f_{\varphi_H(L)}(z) = f_L(\varphi_H^{-1}(z)) + \int^1_0[\gamma^*\lambda + H(t,\gamma(t))]dt\, ,$$
where $\gamma(t) := \varphi^t_H(\varphi_H^{-1}(z))$.
\begin{rem} \label{nota supp com} {\textnormal{In the argument above, as well as in other cases later on, $\varphi_H$ can be assumed to be a compactly supported Hamiltonian diffeomorphism. In fact, since $L$ is compact, given $\varphi_H \in \mathrm{DHam}(M)$, there exists $\psi_K \in \mathrm{DHam}_c(M)$ such that $\varphi_H(L) = \psi_K(L)$, with Hamiltonian $K$ itself compactly supported. It is worth noting that, when $M$ is an open manifold (as in the case where $M = T^*N$) the Hamiltonian $K$ is then uniquely defined by $\psi_K \in \mathrm{DHam}_c(M)$. On the other hand, when $M$ is compact, the Hamiltonian $K$ is defined up to an additive constant, which could be time dependent.}}
\end{rem}}

\subsection{Generating functions}
The aim of the present section is to motivate and then introduce generating functions for exact Lagrangians in cotangent bundles. \\ \\
\noindent We recall that every exact Lagrangian $L \subset T^*N$ which is $C^1$-close to the zero section $\mathcal{O}_N$ of $T^*N$ is the graph of the differential of a function $f$ on $N$, that is
$$L = \Gamma_f = \set{ (q,df(q)): q \in N}\, .$$
In such a case, there is a $1:1$ correspondence between points in $\Gamma_f \cap \mathcal{O}_N$ and critical points of $f$:
$$\textnormal{Crit}(f) := \set{q \in N: df(q) = 0 }\, .$$
\noindent In order to discuss an extension of the above correspondence:
$$\Gamma_f \cap \mathcal{O}_N \leftrightarrow \textnormal{Crit}(f)$$ 
to Hamiltonian diffeomorphisms, we need the following general result. In what follows, $(M,-d \lambda)$ is a Liouville manifold and $\Delta = \set{(z,z): z \in M}$ is the diagonal of $M\times M$. Moreover, we denote by $\lambda_\Delta$ the canonical $1$-form on $T^*\Delta$.

{\begin{prp}
Let $\varphi\in\mathrm{DHam}_c(M)$ be $C^1$--close to the identity. Then
$$\Gamma_\varphi = \set{(z,\varphi(z)): z \in M}$$
is the graph of an exact $1$-form on $\Delta$.
\end{prp}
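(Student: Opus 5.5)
The plan is to work in the product $(M\times M, \omega\ominus\omega)$, i.e.\ with the symplectic form $-d\lambda\oplus d\lambda = -d(\lambda\ominus\lambda)$ where $\lambda\ominus\lambda := pr_1^*\lambda - pr_2^*\lambda$. In this form both the diagonal $\Delta$ and the graph $\Gamma_\varphi$ are Lagrangian. The key ingredient is a Weinstein neighbourhood of $\Delta$: a symplectomorphism $\Psi$ from a neighbourhood $U$ of $\Delta$ in $M\times M$ onto a neighbourhood $V$ of the zero section in $T^*\Delta$. It satisfies $\Psi(z,z)=(z,0)$, and we will arrange the primitives to match: $\Psi^*\lambda_\Delta - (\lambda\ominus\lambda)$ should be exact on $U$. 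This is possible because $U$ retracts onto $\Delta$, and $\lambda\ominus\lambda$ restricts to zero on $\Delta$, as does $\lambda_\Delta$ on the zero section. Hence the closed form $\Psi^*\lambda_\Delta-(\lambda\ominus\lambda)$ vanishes on $\Delta$, so its cohomology class on $U\simeq\Delta$ is zero.

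\textbf{Step 1 (Lagrangian graph over $\Delta$).} Since $\varphi$ is compactly supported, $\Gamma_\varphi$ coincides with $\Delta$ outside a compact set $K\times K$. If $\varphi$ is $C^1$-close to the identity, then $\Gamma_\varphi\subset U$ and $\Psi(\Gamma_\varphi)$ is $C^1$-close to the zero section. It is therefore the graph of a $1$-form $\alpha$ on $\Delta$, which vanishes outside a compact set. Here we use that a submanifold $C^1$-close to the zero section is transverse to the fibres and projects diffeomorphically onto the base; the compact support gives uniformity on the noncompact $M$. Since $\Psi(\Gamma_\varphi)$ is Lagrangian, $\alpha$ is closed, by the example on graphs $G_\alpha$ in the paper.

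\textbf{Step 2 (exactness).} Exactness of $\alpha$ is equivalent to $\lambda_\Delta$ being exact on its graph, since $\lambda_\Delta$ restricted to $G_\alpha$ pulls back to $\alpha$. By the normalization of $\Psi$, this is equivalent to $\lambda\ominus\lambda$ being exact on $\Gamma_\varphi$. Parametrizing $\Gamma_\varphi$ by $z\mapsto(z,\varphi(z))$, this form pulls back to $\lambda-\varphi^*\lambda$. For $\varphi=\varphi_H$ with $H$ compactly supported, the flow formula from Section \ref{sezione 23} gives
$$\varphi^*\lambda-\lambda=dF, \qquad F(z)=\int_0^1\bigl[\lambda(\dot\gamma)+H(t,\gamma(t))\bigr]dt, \quad \gamma(t)=\varphi^t_H(z).$$
This follows by Cartan's formula: $\frac{d}{dt}(\varphi^t)^*\lambda=(\varphi^t)^*(d\iota_{X_H}\lambda+\iota_{X_H}d\lambda)$ and $\iota_{X_H}d\lambda=-dH_t$, with signs handled by the paper's convention. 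Hence $\alpha=dS$, and $S$ can be taken compactly supported, because $F$ vanishes outside $\mathrm{supp}\,H$ and the correction relating $\Psi^*\lambda_\Delta$ to $\lambda\ominus\lambda$ is exact with compactly supported primitive after normalization.

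\textbf{Main obstacle.} I expect the delicate point to be the construction of $\Psi$ on the noncompact $M$ with control at infinity, together with the precise sense of ``$C^1$-close''. The simplest remedy is to use the compact support. We restrict to a compact Liouville domain $M_i$ containing $\mathrm{supp}\,H$, apply the Weinstein theorem near $\Delta_{M_i}$, and observe that outside it $\Gamma_\varphi=\Delta$ corresponds to $\alpha=0$.
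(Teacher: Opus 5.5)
Your proposal is correct and follows the paper's argument. Both use a Weinstein neighbourhood of $\Delta$ in which $\lambda_\Delta$ and $\lambda\ominus\lambda$ differ by an exact form, get closedness of the 1-form from the Lagrangian condition, and get exactness from the vanishing of $[\lambda-\varphi^*\lambda]$. The only difference is that you verify this vanishing directly by the Cartan computation instead of citing the Flux criterion; note that with $\iota_{X_H}d\lambda=-dH_t$ the integrand of $F$ is $\lambda(\dot\gamma)-H(t,\gamma(t))$, a harmless sign slip.
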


\begin{proof} Let $\textnormal{Symp}_0(M)$ be the identity component of the group of symplectomorphisms $\textnormal{Symp}(M)$ and $\widetilde{\textnormal{Symp}_0}(M)$ its universal covering. Let $\varphi_t : M \to M$ be a compactly supported symplectic isotopy. We first recall --see \cite[Theorem 10.2.5]{mcDSal16}-- that $\varphi_1$ is the time-1 flow of a Hamiltonian vector field if and only if $\{\varphi_1\}$ is in the kernel of the Flux homomorphism: $\widetilde{\textnormal{Symp}_0}(M) \to H^1_c(M;\mathbb R)$. 

When the symplectic manifold $(M,-d\lambda)$ is exact, the Flux is defined by $\textnormal{Flux}(\{\varphi_t\}) =  [\lambda-\varphi^*_1\lambda]$ (see \cite[Lemma 10.2.7]{mcDSal16}. Moreover, Weinstein's theorem \cite[Theorem 3.4.13]{mcDSal16} asserts that a neighborhood of the diagonal in $(M \times M, -d(\lambda\ominus \lambda))$ is symplectomorphic to a neighborhood of the zero section in $(T^*\Delta, -d\lambda_\Delta)$. 

Moreover  by Poincaré's lemma, we see that the pull-back by the symplectomorphism  of  $\lambda_\Delta$ differs from $\lambda\ominus \lambda$ by an exact form.
Since $\varphi$ is $C^1$-close to the identity, $\Gamma_\varphi= (\textnormal{Id}\times\varphi)(\Delta)$ is identified by the symplectomorphism to the graph of a $1$-form $\theta$ on $\Delta$, and $\varphi$ being symplectic, $\theta$ is closed. Finally, since $\varphi \in \textnormal{DHam}_c(M)$, the Flux of $\phi$ is then $$0 = [\lambda - \varphi^*\lambda] = [ (\lambda \ominus \lambda)|_{\Gamma_\varphi}] = [{\lambda_{\Delta}}_{\mid \Gamma_{\varphi}}]=[\theta]$$ which means that $\theta$ is exact. 
\end{proof}

\begin{rem}
\textnormal{The same result can be formulated in a more general --not necessarily Liouville-- context. However, it is worth noting that, out of the Liouville case, one has to be careful that on top of the crucial $C^1$-closeness to the identity condition of $\varphi=\varphi_1$, we need to suppose that the whole isotopy $\{\varphi_t\}$ remains in such $C^1$-small neighborhood of the identity. In fact, one can find a Lagrangian $L$ and a $C^0$-small Hamiltonian isotopy $\{\varphi_t\}$ such that $\varphi_t(L)$ is in a Weinstein neighbourhood of $L$ but is not exact as a submanifold in $T^*L$. In particular, this is possible for $L=\mathbb{T}^n$ and in any symplectic manifold of dimension $\geq 6$. For this cases and the connection with the $C^0$ and $C^1$ Flux conjecture we refer to \cite{ACLS}.}
\end{rem}
\noindent Under the hypotheses of the above proposition, let $f_{\varphi}:\Delta\rightarrow \R$ such that 
$\Gamma_\varphi = \Gamma_{f_\varphi}$. It then follows that $\Gamma_\varphi \cap \Delta \leftrightarrow \text{Crit}(f_{\varphi})$ or, equivalently:
$$\text{Fix}(\varphi) \leftrightarrow \text{Crit}(f_{\varphi}) $$
\noindent It is clear that --even if we restrict to exact Lagrangians in cotangent bundles $T^*N$-- it is far from being true that they may be represented as graphs of differentials of smooth functions. \\
\indent In order to go beyond the perturbative setting explained above, the idea is to ``trade dimension for complexity'', by introducing auxiliary parameters. This strategy justifies the next fundamental definition.
\begin{defn}
$S \in C^\infty(N\times \R^k;\R)$ is a generating function (GF) if
\begin{itemize}
\item[$(i)$] $N\times \R ^k \ni (q;\xi) \mapsto \partial_\xi S(q;\xi)\in \R^k$ has $0$ as regular value;
\item[$(ii)$] $i_S: (q;\xi) \mapsto (q,\partial_q S(q,\xi))$, defined on 
$$\Sigma_S := \set{ (q;\xi): \partial_\xi S(q;\xi) = 0}\, ,$$ 
is an embedding.
\end{itemize}
\end{defn}
\noindent From now on, we indicate $\mathscr{L}(T^*N,-d(pdq))$ [resp. $\mathfrak{L}(T^*N,-d(pdq))$] simply by $\mathscr{L}(T^*N)$ [resp. $\mathfrak{L}(T^*N)$]. Moreover, for the sake of readability, we often omit the choice of the primitive for
a given brane.
\begin{prp}\label{prp:GFQILagEmb}
$i_S(\Sigma_S) \in \mathscr{L}(T^*N)$.
\end{prp}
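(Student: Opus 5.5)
By condition $(i)$ of the definition and the regular value theorem, $\Sigma_S = (\partial_\xi S)^{-1}(0)$ is a smooth submanifold of $N\times\R^k$ of codimension $k$, hence of dimension $n=\dim N$. By $(ii)$, $L:=i_S(\Sigma_S)$ is then an embedded $n$-dimensional submanifold of $T^*N$. The plan is to check three things in turn: that $L$ is Lagrangian, that it is exact with an explicit primitive, and the remaining requirements of $\mathscr{L}(T^*N)$.

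\emph{Lagrangian and exact.} Both properties will follow from a single identity. On $\Sigma_S$ the derivative $\partial_\xi S$ vanishes, so
\[
d\bigl(S\vert_{\Sigma_S}\bigr) = \bigl(\partial_q S\, dq + \partial_\xi S\, d\xi\bigr)\big\vert_{\Sigma_S} = \bigl(\partial_q S\, dq\bigr)\big\vert_{\Sigma_S}.
\]
On the other hand, writing $i_S(q;\xi)=(q,\partial_qS(q;\xi))$, we have $i_S^*(p\,dq) = (\partial_q S\, dq)\vert_{\Sigma_S}$. Hence $i_S^*\lambda = d(S\vert_{\Sigma_S})$. This immediately gives $i_S^*\omega = -d\,i_S^*\lambda = 0$, so $L$ is Lagrangian. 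Moreover, since $i_S$ is an embedding onto $L$, the function
\[
f_L := S\circ i_S^{-1}
\]
satisfies $\lambda\vert_L = df_L$. Therefore $(L,f_L)$ is a brane, with primitive $f_L$.

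\emph{Remaining conditions and main obstacle.} Membership in $\mathscr{L}(T^*N)$ as defined in the paper also requires $L$ to be closed and connected. The main obstacle is closedness: $\Sigma_S$ need not be compact for an arbitrary $S$. So I would invoke the standing convention that $S$ is quadratic at infinity (the subject of the section, cf.\ the label GFQI), i.e.\ $S(q;\xi)=Q(\xi)$ for $|\xi|$ large, with $Q$ a nondegenerate quadratic form. Then $\partial_\xi S = \nabla Q(\xi)\neq 0$ outside a compact set, so $\Sigma_S$ lies in $N\times B_R$ and is compact. Since $i_S$ is an embedding, $L$ is then a closed submanifold of $T^*N$. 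Connectedness I would either take as part of the hypotheses, or apply the argument above to each component. Should the intended class of $S$ differ, the same step would instead be handled by whatever properness assumption guarantees that $\Sigma_S$ is compact.
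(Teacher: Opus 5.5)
Your proof is correct and follows the paper's route: the paper's entire argument is the identity $p\,dq\vert_{i_S(\Sigma_S)} = \partial_q S\,dq = \partial_q S\,dq + \partial_\xi S\,d\xi = dS\vert_{\Sigma_S}$, using $\partial_\xi S = 0$ on $\Sigma_S$, which is exactly your key computation. Your extra checks are not in the paper, which tacitly reads the statement as the existence of the primitive $S\circ i_S^{-1}$; these are deducing the Lagrangian condition from exactness, compactness of $\Sigma_S$ under the quadratic-at-infinity assumption (needed since the proposition precedes the GFQI definition), and the caveat on connectedness.
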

\begin{proof}
It is enough to provide a primitive of the restriction to $i_S(\Sigma_S)$ of the Liouville form $pdq$:
    \begin{equation*}
        pdq\vert_{i_S(\Sigma_S)} = \partial_qS dq = \partial_qS dq + \partial_\xi Sd\xi = dS\vert_{\Sigma_S}.
    \end{equation*}
\end{proof}
\noindent In the sequel, we use the notation $L_S$ for $i_S(\Sigma_S)$ and we say that $L_S$ is generated by $S$. From the previous proposition, we obtain that the correspondence 
$$L_S \cap \mathcal{O}_N \leftrightarrow \text{Crit}(S)$$
still holds. Clearly, if $k = 0$, $L_S = \Gamma_{S}$.
\begin{rem}
{\textnormal{We remind that, general (that is, not necessarily exact) Lagrangians in a cotangent bundle, can always be described at least locally by a generating function with auxiliary parameters. This is the content of a theorem proved by V. Maslov in \cite{Masl} in 1965 and refined by L. H\"ormander in \cite{Hor} in 1971, we refer to \cite[Theorem 2.1]{Car} for a detailed proof.}}     
\end{rem}
\indent {In order to apply the Calculus of Variations to generating functions, one needs a condition assuring the existence of critical points. The next class of generating functions has traditionally been used\footnote{More recently, \cite{Abouzaid-Courte-Guillermou-Kragh} used a different kind of generating functions with milder conditions at infinity. }: 
\begin{defn} \label{QI}
A generating function $S \in C^\infty(N\times \R^k;\R)$ is quadratic at infinity (GFQI) if for $|\xi| \ge R$
$$S(q;\xi) = \xi^T Q \xi\, ,$$
where $\xi^T Q \xi$ is a non degenerate quadratic form, i.e. $Q$ is a non-degenerate symmetric matrix. 
\end{defn}
{\indent Sometimes \textit{quadratic at infinity} is replaced by \textit{asymptotically quadratic}, we refer to \cite[Definition 2.3]{Theret}. 
\begin{defn} \label{AQI}
A generating function $S \in C^\infty(N\times \R^k;\R)$ is asymptotically quadratic if for every $q \in N$
$$\norm{S(q, \cdot) - Q(q,\cdot) }_{C^1} < +\infty\, , $$
where, fixed $q \in N$, $Q(q,\xi) = \xi^T Q(q) \xi$ is a nondegenerate quadratic.
\end{defn}
\noindent The advantage of the above definition is that the class of exact Lagrangians generated by asymptotically quadratic functions is stable under product, that is
$$L_{S_1} \times L_{S_2} := \set{ (q_1,q_2,p_1,p_2): (q_1,p_1) \in L_{S_1} \text{ and } (q_2,p_2) \in L_{S_2}} \subset T^*(N \times N)$$
has asymptotically quadratic generating function given by 
$$S(q_1,q_2; \xi_1,\xi_2) := S_1(q_1;\xi_1) + S(q_2;\xi_2)\, .$$
The next lemma --see \cite[Proposition 2.12]{the99}-- assures that Definitions \ref{QI} and \ref{AQI} are in fact equivalent.
\begin{lem} \label{serve dopo 1}
Let $S \in C^\infty(N\times \R^k;\R)$ be an asymptotically quadratic generating function. Then there is a GFQI $S' \in C^\infty(N\times \R^k;\R)$ such that $L_S = L_{S'}$.
\end{lem}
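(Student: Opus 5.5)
I plan to deform $S$ to a function that is exactly quadratic outside a compact set in $\xi$, and to check that the deformation changes neither the critical set $\Sigma_S$ nor the map $i_S$ on it. The first step is to reduce to a $q$-independent quadratic form. Since $N$ is compact and $Q(q)$ is a continuous family of nondegenerate symmetric matrices, their signatures are locally constant. By compactness I will first assume (possibly after adding trivial quadratic variables, which does not change $L_S$) that there is a fibrewise linear change of variables $\xi=A(q)\eta$ with $A(q)^TQ(q)A(q)=Q_0$ fixed. Globally this needs the negative eigenbundle of $Q(q)$ to be trivial. I will handle this by stabilisation: add variables $\eta'$ with a quadratic form $\pm|\eta'|^2$, so that the negative bundle $E^-$ becomes $E^-\oplus F$ with $F$ a complement making the sum trivial. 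The map $(q,\xi)\mapsto(q,A(q)\xi)$ is a fibred diffeomorphism, so it preserves $\Sigma_S$ and $i_S$: indeed $\partial_q$ of the composed function on $\Sigma$ equals $\partial_q S$, because $\partial_\xi S=0$ there. The $C^1$-boundedness of $S-Q$ survives this change since $A$ is bounded on compact $N$.

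The second step is the cut-off. I write $S(q,\xi)=Q_0(\xi)+R(q,\xi)$ with $\|R\|_{C^1}\le C$, uniformly in $q$ by compactness (I will assume, or arrange, uniformity in $q$). I pick a cutoff $\rho(|\xi|/R_0)$ equal to $1$ on $|\xi|\le R_0$ and $0$ for $|\xi|\ge 2R_0$, with $|\rho'|\le 2$, and set $S'=Q_0+\rho R$. Then
$$\partial_\xi S'=2Q_0\xi+\rho\,\partial_\xi R+R\,\partial_\xi\rho .$$
For $|\xi|\ge R_0$ the first term has norm at least $c|\xi|$, while the other two are bounded by $C+2C/R_0$. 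Taking $R_0$ large, $\partial_\xi S'$ has no zeros for $|\xi|\ge R_0$. The same estimate with $\rho\equiv1$ shows that $\Sigma_S\subset\{|\xi|<R_0\}$. On $\{|\xi|<R_0\}$ we have $S'=S$, so $\Sigma_{S'}=\Sigma_S$, the regular value condition holds, and $i_{S'}=i_S$. Hence $L_{S'}=L_S$ and $S'$ is quadratic at infinity in the sense of Definition~\ref{QI}.

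The main obstacle is the first step: the global normalisation of $Q(q)$ to a constant form over $N$ when the negative eigenbundle is nontrivial. If it turns out awkward, a fallback is to avoid normalisation altogether and interpolate $S'=Q(q,\xi)+\rho R$ as above. This yields a GF equal to $Q(q,\xi)$ at infinity, and the remaining step is to remove the $q$-dependence. For that I would use a fibrewise isotopy of quadratic forms, after stabilisation, through nondegenerate forms $Q_t(q)$. Applying the same cut-off estimate to $Q_t$ shows that no critical points enter from infinity, so in the region $|\xi|\ge 2R_0$ the family can be straightened by a fibred diffeomorphism, which does not alter $L_S$.
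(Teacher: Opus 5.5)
Your cutoff step is correct and carries the real content of the lemma. The paper gives no argument of its own and only cites Th\'eret. The cutoff uses only $\sup|R|$ and $\sup|\partial_\xi R|$. When the form at infinity is already $q$-independent, it proves the statement on the same $N\times\R^k$; this covers $S_1\oplus S_2$, $S_1\ominus S_2$ and $S+a$, which is where the paper uses the lemma. The uniformity in $q$ of these bounds has to be part of the hypothesis. It does not follow from finiteness for each fixed $q$, so it cannot be "arranged". Without it, the estimate placing $\Sigma_S$ inside $\{|\xi|<R_0\}$ breaks down.

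The genuine gap is Step 1, which you rightly call the main obstacle but do not resolve. Stabilizing by a constant form $\pm|\eta'|^2$ only adds a trivial summand to $E^-$ (or to $E^+$). But $E^-\oplus\R^l$ is never trivial when, for instance, $w_1(E^-)\neq 0$. Take $N=\mathbb{S}$, $k=2$, and $Q(q)$ equal to $-\xi_1^2+\xi_2^2$ rotated by the angle $\pi q$; both its eigenbundles are M\"obius bands. A complement $F$ as you describe is then a nontrivial bundle, so the form on the new variables must depend on $q$. Moreover, $A(q)^TQ(q)A(q)=Q_0$ requires $E^+$ to be trivial as well, which you do not address. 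Your fallback fails for the same reason: along an isotopy of nondegenerate forms, the negative bundle does not change up to isomorphism.

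The missing idea is to stabilize with the opposite form, $S_1(q;\xi,\eta)=S(q;\xi)-\eta^TQ(q)\eta$ with $\eta\in\R^k$. Since $\partial_\eta S_1=-2Q(q)\eta$ vanishes only at $\eta=0$, where the $q$-derivative of the added term also vanishes, you get:
\begin{itemize}
\item $\Sigma_{S_1}=\Sigma_S\times\{0\}$;
\item the regular value condition still holds;
\item the map $i_S$ and the primitive are unchanged.
\end{itemize}
The form $Q(q)\oplus(-Q(q))$ has negative bundle $E^-\oplus E^+$ and positive bundle $E^+\oplus E^-$. Both are trivialized by the frames $(P^-(q)e_j,P^+(q)e_j)$ and $(P^+(q)e_j,P^-(q)e_j)$, where $P^\pm(q)$ are the spectral projections of $Q(q)$. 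Gram--Schmidt then gives a smooth $A(q)$ with $A(q)^T\bigl(Q(q)\oplus(-Q(q))\bigr)A(q)=\mathrm{diag}(-I_k,I_k)$, after which your Step 2 applies.

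Two smaller corrections. First, the full $C^1$ bound is not preserved by $\xi=A(q)\eta$: the $q$-derivative picks up $\partial_\xi R\cdot(\partial_qA)\eta$, which grows linearly in $\eta$. The fibrewise bounds, which are all the cutoff uses, are preserved. Second, the result lives on $N\times\R^{2k}$ rather than $N\times\R^k$. This is harmless for the applications, but you should state that for genuinely $q$-dependent $Q$ you obtain the lemma only up to such a stabilization.
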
}}
\indent There are (see e.g. \cite{Hor} and \cite{Wei}) two main operations on GFQIs which preserve the corresponding Lagrangian brane in $\mathscr{L}(T^*N,-d\lambda)$. 
\begin{itemize}
\item[$(i)$] \textit{Stabilization.} Let $S \in C^{\infty}(N\times \mathbb{R}^k,\mathbb{R})$ be a GFQI. Then
$$S'(q;\xi,\eta) := S(q;\xi) + \eta^T R \eta\, ,$$
where $\eta \in \mathbb{R}^l$ and $\eta^T R \eta$ is a nondegenerate quadratic form, is such that $L_S = L_{S'}$.
\item[$(ii)$] \textit{Fibered diffeomorphism.}  Let $S \in C^{\infty}(N\times \mathbb{R}^k,\mathbb{R})$ be a GFQI and $N \times \mathbb{R}^k \ni (q,\xi) \mapsto (q,\phi(q,\xi)) \in N \times \mathbb{R}^k$ be a map such that, for every $q \in N$,
$$\mathbb{R}^k \ni \xi \mapsto \phi(q,\xi) \in \mathbb{R}^k$$
is a diffeomorphism. Then 
$$S'(q;\xi) := S(q;\phi(q,\xi))$$
is such that $L_S = L_{S'}$.
\end{itemize}
\indent Some of the central problems in the global theory of Lagrangian submanifolds are $(a)$ the existence of a GFQI for a Lagrangian submanifold $L \subset T^*N$, $(b)$ the uniqueness of it (up to the operations $(i)$ -- $(ii)$ described above). \\
The next theorem, see \cite{Sik}, partially answers the first question.
\begin{thm}[Laudenbach, Sikorav] \label{thm:lauSik}
If $L \in \mathscr{L}(T^*N)$ admits a GFQI and $\varphi_H \in \mathrm{DHam}(T^*N)$, then $\varphi_H(L)$ admits a GFQI.
\end{thm}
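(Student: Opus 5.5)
We will follow the classical broken-geodesic / composition approach of Chaperon and Sikorav, reducing the statement to the case of Hamiltonian diffeomorphisms $C^1$-close to the identity.

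\emph{Reduction.} By Remark \ref{nota supp com}, we may replace $\varphi_H$ by a compactly supported $\psi_K\in\mathrm{DHam}_c(T^*N)$ with $\psi_K(L)=\varphi_H(L)$. Writing the isotopy $\psi^t_K$ and subdividing $[0,1]$ into $m$ small intervals, we get
$$\psi_K=\varphi_m\circ\cdots\circ\varphi_1,$$
where each $\varphi_j\in\mathrm{DHam}_c(T^*N)$ is $C^1$-close to the identity. This uses compact support, which gives uniform $C^1$ control. By induction on $m$, it suffices to show the following: if $L=L_S$ with $S$ a GFQI and $\varphi$ is $C^1$-close to the identity, compactly supported and Hamiltonian, then $\varphi(L)$ admits a GFQI.

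\emph{Generating function for a small $\varphi$.} In $T^*N$ we avoid the diagonal/Weinstein picture of the earlier proposition and use the more concrete one. When $\varphi$ is $C^1$-close to the identity, the map $(q,p)\mapsto(Q,p)$, with $(Q,P)=\varphi(q,p)$, is a diffeomorphism. Then there is a function $W(Q,p)$ such that $\varphi(q,p)=(Q,P)$ if and only if
$$q-Q=\partial_p W(Q,p),\qquad P-p=\partial_Q W(Q,p).$$
This is the exactness argument: $(P-p)dQ+(q-Q)dp$ is closed, and its class vanishes because the flux is zero. Since $\varphi$ is compactly supported, $W$ has compact support in $p$. To make this precise on $N$ rather than $\mathbb{R}^n$, I would either embed $N$ in some $\mathbb{R}^d$ or work locally with $q-Q$ interpreted via the exponential map of a metric. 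The simplest route is to treat first $N=\mathbb{R}^n$ (or $T^n$) and then reduce the general case by embedding $N\hookrightarrow\mathbb{R}^d$ and lifting $L$ to $T^*\mathbb{R}^d$ via the conormal construction.

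\emph{Composition formula.} We set
$$S'(Q;\xi,q,p):=S(q;\xi)+\langle p,Q-q\rangle+W(Q,p),$$
with auxiliary variables $(\xi,q,p)$. The critical equations in the auxiliary variables are:
\begin{itemize}
\item[] $\partial_\xi S=0$, so $(q,\partial_q S)\in L$;
\item[] $\partial_q S=p$;
\item[] $Q-q+\partial_p W=0$.
\end{itemize}
Then $\partial_Q S'=p+\partial_Q W=P$. Hence $L_{S'}=\varphi(L)$, and one checks the transversality and embedding conditions directly from the fact that $\varphi$ is a diffeomorphism. The term $\langle p,Q-q\rangle$ is, after the change of variables $u=q-Q$, a nondegenerate quadratic form in $(u,p)$ plus a bounded perturbation. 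Since $W$ is compactly supported and $S$ is quadratic at infinity in $\xi$, the function $S'$ is asymptotically quadratic in the sense of Definition \ref{AQI}. By Lemma \ref{serve dopo 1} we then obtain a genuine GFQI, which closes the induction.

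\emph{Main obstacle.} The delicate step is verifying asymptotic quadraticity globally: on $N$ compact, the variable $q$ is not a fiber variable, so the product $\langle p,Q-q\rangle$ needs the embedding trick. I expect the cleanest fix to be working on $T^*\mathbb{R}^d\supset$ a neighborhood of $N$, extending $\varphi$ suitably, and checking that the $C^1$ norm of $S'$ minus the quadratic part stays bounded. This checking is where most of the technical work lies.
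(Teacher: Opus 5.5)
Your route is the one the paper has in mind. The paper does not prove this theorem itself: it writes down the formal generating function $\mathscr{A}_{H,S}$ on paths and leaves the reduction to finitely many parameters (Amann--Conley--Zehnder, or broken geodesics) to the literature. Your one-step formula $S(q;\xi)+\langle p,Q-q\rangle+W(Q,p)$, iterated over $\varphi_m\circ\cdots\circ\varphi_1$, is exactly the broken-geodesic discretization of $\mathscr{A}_{H,S}$. Your one-step verification (critical equations, regularity of $0$, embedding, asymptotic quadraticity, then the lemma turning asymptotically quadratic generating functions into GFQIs) is correct whenever $Q-q$ makes sense. For example, for $N=T^n$ it works with the lifted variable $u=q-Q\in\mathbb{R}^n$, a $\mathbb{Z}^n$-periodic $S$ equal to $\xi^TA\xi$ for $|\xi|\ge R$, and $W$ periodic in $Q$ and compactly supported in $p$.

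The gap is, as you suspect, the case of a general closed $N$. That is where the content of the Laudenbach--Sikorav theorem lies, and the fix you sketch does not work as stated. Write $C=T^*\mathbb{R}^d|_N$ and $\rho\colon C\to T^*N$ for restriction of covectors.

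\emph{The conormal lift cannot be compactly supported.} The lift $\rho^{-1}(L)$ is an $\mathbb{R}^{d-n}$-bundle over $L$, so no compactly supported $\tilde\varphi$ can map it onto $\rho^{-1}(\varphi(L))$: for $(x,\eta)\in L\setminus\varphi(L)$ it would fix the points of $\rho^{-1}(x,\eta)$ with large normal momentum.

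\emph{The natural lift is not uniformly $C^1$-small.} For $\tilde H(x,y)=H(\pi(x),y|_{T_{\pi(x)}N})$, the derivative $\partial_x\tilde H$ grows linearly in the normal part of $y$ through the second fundamental form of $N$. So the steps are not uniformly $C^1$-close to the identity, and neither the existence of $W$ nor your bounded-perturbation estimate survives.

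\emph{You never come back to $N$.} Since $\rho^{-1}(\varphi(L))\subset C$, restricting a generating function of it over $\mathbb{R}^d$ to $N\times\mathbb{R}^m$ gives a $d$-dimensional fiber-critical set. Hence $0$ is not a regular value, and the result is not a generating function of $\varphi(L)$.

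What works is a lift \emph{transverse} to $C$:
\begin{itemize}
\item Take $\pi\colon U\to N$ a tubular retraction and $\chi$ a cut-off equal to $1$ near $N$, and set $\hat S(x;\xi)=\chi(x)S(\pi(x);\xi)+(1-\chi(x))\,\xi^TA\xi$. Near $N$ this generates $\pi^*L=\{(x,(d\pi_x)^*\eta):(\pi(x),\eta)\in L\}$, which is transverse to $C$ with reduction $L$.
\item Lift the isotopy to a compactly supported one, cutting $\tilde H$ off in the normal momentum, whose flow preserves $C$ near the orbit of $\pi^*L\cap C$ and induces $\varphi$ on the reduction $T^*N$.
\item Use that restricting to $N$ a generating function of a Lagrangian transverse to $C$ generates its reduction.
\end{itemize}
Then $G(X;\xi,u,y)=\hat S(X+u;\xi)-\langle y,u\rangle+W(X,y)$, for $X\in N$, has the required properties. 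It differs from $\xi^TA\xi-\langle y,u\rangle$ by a $C^1$-bounded function. Its fiber-critical points have $|u|$ small, so only the region where $\hat S=S\circ\pi$ matters. Finally, it generates $\varphi(L)$.
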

\noindent It is worth noting that, if $L \subset T^*N$ has GFQI, $S(q,\xi)$, with $(q,\xi) \in N \times \mathbb{R}^n$ then a ``formal GFQI'' (formal, since it has infinite parameters) for $\varphi_H(L)$ is given by using the Action Functional, as follows. 
Let $E = H^1([0,1], T^*N)$ be the usual Sobolev space and let $\gamma(s) = (q(t),p(t))$ be a function in $E$. Set
\begin{equation*}
\mathscr{A}_{H,S}(\gamma, \xi):= S(q(0), \xi) + \int_0^1 \left[p(t)\dot{q}(t) - H(t, \gamma(t))\right]dt\, 
\end{equation*}
Then $\mathscr{A}_{H,S}$ can be considered as a generating function with ``auxiliary parameters'' $(\{\gamma(t): t \in [0,1[\},p(1),\xi)$. We then search for the variation of $\mathscr{A}_{H,S}$ on $\set{\gamma \in E:q(1) = q} \times \mathbb{R}^k$: 
\begin{equation}
\begin{split}
& d \mathscr{A}_{H,S}(q; \gamma, \xi)(\delta \gamma,\delta\xi) = \\
& = \frac{\partial S}{\partial \xi} \delta \xi + \frac{\partial S}{\partial q(0)} \delta q(0) + \int^1_0 \left( \dot{q} - \frac{\partial H}{\partial p}\right)\delta p(t)dt + \int^1_0\left( \frac{\partial}{\partial q}(p(t)\dot{q}(t)) - \frac{\partial H}{\partial q} \right) \delta q(t) dt = \\
& = \frac{\partial S}{\partial \xi} \delta \xi + \frac{\partial S}{\partial q(0)} \delta q(0) + \int^1_0 \left( \dot{q} - \frac{\partial H}{\partial p}\right)\delta p(t)dt - \int^1_0 \left(\dot{p} + \frac{\partial H}{\partial q} \right) \delta q(t) dt + \\
& + p(1) \delta q(1) - p(0) \delta q(0)\, .
\end{split}  
\end{equation}
We first notice that $\delta q(1) = 0$ since $q(1) = q$. Therefore, the Lagrangian generated by $\mathscr{A}_H$ is defined by equations:
$$\frac{\partial S}{\partial \xi}(q(0),\xi) = 0, \qquad p(0) = \frac{\partial S}{\partial q(0)} \Rightarrow (q(0),p(0)) \in L$$
and
$$(q(t),p(t)) = \varphi_H^t(q(0),p(0)), \qquad \forall t \in [0,1]\, ,$$
with $\varphi_H^1 = \varphi_H$. If it wasn't for the fact that $\mathscr{A}_{H,S}$ is defined on an infinite diemnsional space, it would qualify as a GFQI for $\varphi_H(L)$.  A reduction to finite parameters (\`a la Amann-Conley-Zehnder \cite[Section 3]{ACZ}, \cite[(2.1.4)]{Cha1} or with the ``broken geodesics'' method \cite{Cha2}) can be performed in order to obtain a ``bona fide' GFQI. \\ 
\indent We observe that Theorem \ref{thm:lauSik} as well as the uniqueness theorem from \cite{vit92} have been reformulated as a fibration theorem by D. Th\'eret ( \cite[top of Page 256, Theorem 4.2]{the99}, as explained here below. \\
For every $k \ge 0$, we call $\mathscr{G}_k(N)$ the set of GFQI on $N \times \mathbb{R}^k$; it is
an open subset of $C^{\infty}(M\times \mathbb{R}^k, \mathbb{R})$ endowed with the Whitney strong topology. Moreover, let
$$\mathscr{G}(N) := \set{ S \in \mathscr{G}_k(N) \text{ for some } k \ge 0} \slash \sim$$
where $S \sim S_1$ if $S$ coincides with $S_1$ up to the two operations on generating functions $(i)$ -- $(ii)$ described above. Of course, there is a well defined map
\begin{equation*}\label{eq:serFib}
\begin{split}
\pi: \mathscr{G}(N) & \rightarrow \mathscr{L}(T^*N) \\
S &\mapsto L_S\, .
\end{split}
\end{equation*}
In the following, $\Delta_n$ denotes the standard $n$-simplex in $\mathbb{R}^{n+1}$. We say that a map $\Delta_n \rightarrow \mathscr{G}(N)$ is smooth if it factors through a smooth function $\Delta_n \rightarrow \mathscr{G}_k(N)$ (for some $k\ge 0$) making the following diagram commutative:
    \begin{equation*}
        \begin{tikzcd}
\Delta_n \arrow[r]\arrow[dr]&\mathscr{G}_k(N)\arrow[d]\\
&\mathscr{G}(N)
\end{tikzcd}.
    \end{equation*}
    
\begin{thm}[Théret]\label{thm:theSerFib}
The map $\pi: \mathscr{G}(N) \rightarrow \mathscr{L}(T^*N)$ is a smooth Serre fibration.
\end{thm}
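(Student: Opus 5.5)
To prove that $\pi:\mathscr{G}(N)\to\mathscr{L}(T^*N)$ is a smooth Serre fibration, I will check the homotopy lifting property for smooth maps of simplices. Let $L_{s,t}$, $(s,t)\in\Delta_n\times[0,1]$, be a smooth family of branes. Let $S_{s,0}$ be a smooth family of GFQI, say in $\mathscr{G}_k(N)$, with $L_{S_{s,0}}=L_{s,0}$. We must extend it to a smooth family $S_{s,t}$, possibly after stabilization, with $L_{S_{s,t}}=L_{s,t}$, and with primitives matching. Since $\Delta_n\times[0,1]$ deformation retracts onto $\Delta_n\times\{0\}$, this is exactly the lifting property required.

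\textbf{Step 1: realize the deformation by a Hamiltonian isotopy.} Every smooth family of exact Lagrangians is given by an exact Lagrangian isotopy. Such an isotopy is generated by a parametrized Hamiltonian. Concretely, for fixed $s$ the variation $\partial_t L_{s,t}$ is a closed, in fact exact, $1$-form on $L_{s,t}$. Its primitive is determined by the variation of the primitive $f_{L_{s,t}}$. I would extend this primitive to a compactly supported function $H_{s,t}$ on $T^*N$, smoothly in $(s,t)$, using a tubular/Weinstein neighbourhood; compact support is allowed by Remark \ref{nota supp com}. This gives $L_{s,t}=\varphi^t_{H_s}(L_{s,0})$, with the primitives transported as in Section \ref{sezione 23}.

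\textbf{Step 2: parametrized Laudenbach–Sikorav construction.} Next I would apply the construction behind Theorem \ref{thm:lauSik} with parameters. Break the isotopy into $m$ small time steps, with $m$ uniform in $(s,t)$ by compactness of $\Delta_n\times[0,1]$. Each short-time map $\varphi^{t_{j+1}}_{t_j}$ is $C^1$-close to the identity, so it has a generating function on the diagonal, as in the Proposition on $\Gamma_\varphi$. Composing these via the broken-trajectory formula $S(q_0,\xi)+\sum_j\big(p_j(q_{j+1}-q_j)-h_j(q_{j+1},p_j)\big)$ yields a generating function for $L_{s,t}$. It depends smoothly on $(s,t)$ and is asymptotically quadratic, since $H$ is compactly supported. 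Lemma \ref{serve dopo 1} then makes it quadratic at infinity. Applying that lemma smoothly in parameters requires a parametric version of the lemma, which I expect follows from its proof. At $t=0$, the resulting function is $S_{s,0}$ plus a nondegenerate quadratic form in the extra variables, up to a fibered diffeomorphism. So it agrees with the given lift in $\mathscr{G}(N)$, by stabilization and fibered diffeomorphism.

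\textbf{Step 3: correct the starting point.} The formula at $t=0$ equals $S_{s,0}\oplus$ quadratic only up to a fibered diffeomorphism depending smoothly on $s$. To get an exact extension rather than one agreeing only up to $\sim$, I would precompose the whole family by this $s$-dependent fibered diffeomorphism, extended trivially in $t$. The constants in the primitives are matched by adding the action term, which the formula already contains.

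The main obstacle is Step 2's uniformity. The number of breaks must be uniform, and the result must be genuinely quadratic at infinity, smoothly in the Whitney topology over the whole parameter space. In particular, the asymptotically-quadratic-to-GFQI modification must be done in families. I would handle this by choosing a single radius $R$, valid for all compact parameters, beyond which a fixed cutoff interpolates to the quadratic part.
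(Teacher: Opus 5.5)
The paper does not prove this statement; it quotes it from Th\'eret \cite{the99}. Your architecture is the standard one: lift the family of branes to a parametrized compactly supported Hamiltonian isotopy, run the Laudenbach--Sikorav composition with parameters, and normalize at $t=0$ by stabilization and a fibered diffeomorphism. You rightly avoid appealing to uniqueness at $t=0$, which would be circular, since Theorem \ref{theret2} is deduced from this fibration. Your uniformity devices (a fixed number of breaks, a fixed cutoff radius) are sound wherever your formula makes sense.

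The gap is Step 2 for a general closed $N$. The formula $S(q_0,\xi)+\sum_j\bigl(p_j(q_{j+1}-q_j)-h_j(q_{j+1},p_j)\bigr)$ presupposes a linear structure on the base. It works for $N=\mathbb{T}^n$ using lifts, but on a general $N$ the differences $q_{j+1}-q_j$ are meaningless. More seriously, the intermediate points $q_0,\dots,q_{m-1}$ are auxiliary variables ranging in $N$. So the output is not a function on $N\times\mathbb{R}^K$ and does not define a point of any $\mathscr{G}_K(N)$.

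Replacing $q_{j+1}-q_j$ by $\exp^{-1}_{q_{j+1}}(q_j)$ only gives a function on an iterated ball bundle with no quadratic behaviour near its boundary. Getting auxiliary variables in a vector space on a general $N$ is exactly where the general case of Theorem \ref{thm:lauSik} needs an extra device, for instance one passing through an embedding $N\hookrightarrow\mathbb{R}^m$. You would have to carry that device out smoothly in $(s,t)$.

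Relatedly, the mixed-coordinate functions $h_j(q_{j+1},p_j)$ are not what the Proposition on $\Gamma_\varphi$ provides. That Proposition gives a function on $\Delta$ in an abstract Weinstein chart. In the linear case the $h_j$ come instead from the fact that $(q,p)\mapsto(Q(q,p),p)$ is a diffeomorphism when $\varphi$ is $C^1$-small.

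Step 3 also depends on the linear formula. There, with $u_j=q_{j+1}-q_j$, the $t=0$ function is $S(q-\sum_ju_j,\xi)+\sum_jp_ju_j$. Precomposing with the fibered shear $p_j\mapsto p_j+\int_0^1\partial_qS(q-\theta\sum_iu_i,\xi)\,d\theta$ turns it into $S(q,\xi)+\sum_jp_ju_j$. Whatever general-$N$ construction you adopt, you must re-establish this normal form at $t=0$, because it is precisely what guarantees that your lift starts at the prescribed class in $\mathscr{G}(N)$.
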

\noindent The result contained in the above theorem can be rephrased as follows. If the smooth map $f: \Delta_n \to \mathscr{L}(T^*N)$ has a smooth lift $F: \Delta_n \to \mathscr{G}(N)$ and if $(f_t:\Delta_n \to \mathscr{L}(T^*N))_{t \in [0,1]}$ is a smooth homotopy of $f = f_0$, then there is a smooth homotopy $(F_t:\Delta_n \to \mathscr{G}(N))_{t \in [0,1]}$ such that $F_0 = F$ (up to equivalence $\sim$), and $\pi \circ F_t = f_t$ for every $t \in [0,1]$. In particular, the following diagram commutes:
\begin{equation*}
    \begin{tikzcd}
& \Delta_n \times [0,1] \arrow[r] \arrow[dr, dashed] & \mathscr{L}(T^*N) \\
& \Delta_{n} \times \{0\} \arrow[u, hook] \arrow[r] & \mathscr{G}(N) \arrow[u,"\pi"]
\end{tikzcd}.
\end{equation*}
It is not yet known whether $\pi$ is surjective: this would follow from the ``Nearby Lagrangian Conjecture'' attributed to V.I. Arnold (and first stated explicitly in \cite{Lalonde-Sikorav}). \\

\noindent \textbf{Conjecture.} Let $L \in \mathscr{L}(T^*N)$, where $L$ and $N$
are closed manifolds. Then there exists $\varphi_H \in \mathrm{DHam}(T^*N)$ such that $L = \varphi_H(\mathcal{O}_N)$. \\

\noindent However, by the previous theorem, $\pi$ is surjective on any path-connected component of $\mathscr{L}(T^*N) $ which has non-empty intersection with the image of $\pi$.
\begin{exo} \textnormal{Let $(M,-d\lambda)$ be an exact symplectic manifold. Prove that, for any fixed exact Lagrangian $L \in \mathscr{L}(M,-d\lambda)$, the map 
\begin{equation*}
\begin{split}
\mathrm{DHam}_c(M) & \rightarrow \mathscr{L}(M,-d\lambda) \\
\varphi & \mapsto \varphi_H(L)\, .
\end{split}
\end{equation*}
is a smooth Serre fibration.}
\end{exo}
\noindent The fact that the map $\pi: \mathscr{G}(N) \rightarrow \mathscr{L}(T^*N)$ is a smooth Serre fibration, combined to Theorem \ref{thm:lauSik}, provides the next uniqueness (up to equivalence $\sim$) result, see \cite[Theorem 5.1]{the99}.
\begin{thm}[Théret] \label{theret2}
If $L \in \mathscr{L}(T^*N)$ admits a unique GFQI and $\varphi_H \in \mathrm{DHam}(T^*N)$, then $\varphi_H(L)$ admits a unique GFQI.  
\end{thm}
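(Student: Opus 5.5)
Fix the Hamiltonian isotopy $(\varphi_H^t)_{t\in[0,1]}$ and write $L_t:=\varphi_H^t(L)$, so that $t\mapsto L_t$ is a smooth path in $\mathscr{L}(T^*N)$ (branes transported with the primitive described in Section~\ref{sezione 23}) joining $L_0=L$ to $L_1=\varphi_H(L)$. Theorem~\ref{thm:lauSik} tells us that every $L_t$ admits a GFQI, so existence is not the issue. We must show that any two GFQIs $S_1,S_2$ of $L_1$ are equivalent under $\sim$, i.e.\ that the fiber $\pi^{-1}(L_1)$ of $\pi:\mathscr{G}(N)\to\mathscr{L}(T^*N)$ is a single point. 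The approach is to transport each of $S_1,S_2$ backwards along the path to $L_0$, use uniqueness there, and transport forwards again.

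\textbf{Step 1 (lifting the reversed path).} For $i=1,2$ apply Theorem~\ref{thm:theSerFib} with $n=0$: the point $\Delta_0\to\mathscr{G}(N)$ given by $S_i$ lifts the constant map to $L_1$. The reversed homotopy $s\mapsto L_{1-s}$ then lifts to a smooth path $s\mapsto S_i^s$ in some $\mathscr{G}_{k}(N)$ with $S_i^0=S_i$ and $\pi(S_i^s)=L_{1-s}$. After stabilizing, both lifts live in a common $\mathscr{G}_k(N)$. Their endpoints $S_1^1,S_2^1$ generate $L_0=L$, so by hypothesis $S_1^1\sim S_2^1$.

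\textbf{Step 2 (concatenating and lifting a square).} Consider the path $\Gamma$ in $\mathscr{G}(N)$ that runs along $S_1^s$ from $S_1$ to $S_1^1$, then through the equivalence $S_1^1\sim S_2^1$, then back along $S_2^{1-s}$ to $S_2$. Its projection is the loop $L_1\to L_0\to L_1$, which retraces the same path and is therefore null-homotopic rel endpoints by the obvious homotopy $(u,s)\mapsto L_{(1-u)\cdot(\text{reparametrized }s)}$ that contracts it to the constant $L_1$. Using the homotopy lifting property for $\Delta_1$ (with its endpoints held fixed, i.e.\ lifting relative to $\partial\Delta_1$, which follows from the Serre property via $\Delta_1\times[0,1]\cong\Delta_1\times\{0\}\cup\partial\Delta_1\times[0,1]$ up to homeomorphism), we lift this homotopy starting at $\Gamma$. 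The end of the lift is a path inside the fiber $\pi^{-1}(L_1)$ joining $S_1$ to $S_2$.

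\textbf{Step 3 (from a path to an equivalence).} The final ingredient is that a smooth path of GFQIs all generating the same brane consists of mutually equivalent functions. Concretely, if $S^u$ generates a fixed $L$ for every $u$, then a Moser-type argument produces a fibered isotopy $\phi_u$ with $S^u(q,\phi_u(q,\xi))=S^0(q,\xi)$, after a stabilization. The point is that the derivative $\partial_u S^u$ vanishes on $\Sigma_{S^u}$ (the critical values are fixed because the primitive is fixed) and the fiber critical set moves consistently. This yields $S_1\sim S_2$.

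We expect Step 3 to be the main obstacle. First, it needs care with the quadratic behaviour at infinity. Second, in the paper's formalism the path-lifting with fixed endpoints is exactly what Théret's fibration statement is designed to provide; so we would first try to argue that $\mathscr{G}(N)$ was defined with $\sim$ absorbing fiberwise paths, reducing Step 3 to the definition. Only otherwise would we carry out the Moser argument.
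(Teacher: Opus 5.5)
Your proposal is correct and is essentially the argument the paper points to; the paper itself only invokes Th\'eret's Serre fibration together with the Laudenbach--Sikorav existence theorem and defers to Th\'eret for details. You lift back to $L$, use uniqueness there, lift the contraction of the back-and-forth loop to get a path in the fiber over $\varphi_H(L)$, and turn fiber paths into equivalences by a Moser argument; that step is genuinely needed, since the paper's $\sim$ does not absorb fiber paths by definition, and it works because $\partial_u S^u$ vanishes on $\Sigma_{S^u}$ and $0$ is a regular value of $\partial_\xi S^u$, so Hadamard's lemma gives $\partial_u S^u=-\partial_\xi S^u\cdot X_u$ for a vertical field $X_u$ with complete flow, with no stabilization required.

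Only cosmetic repairs are needed. To make $\Gamma$ factor through a single $\mathscr{G}_k(N)$, apply the stabilizations and fibered diffeomorphism realizing $S_2^1\sim S_1^1$ to the whole path $s\mapsto S_2^s$. With $\pi\Gamma(s)=L_{\tau(s)}$, your contraction should read $(u,s)\mapsto L_{1-(1-u)(1-\tau(s))}$; as written it ends at $L_0$ and moves the endpoints. Finally, relative lifting is unnecessary: this base homotopy is constant on $\partial\Delta_1$, so the side edges of any lift already lie in the fiber over $L_1$.
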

\noindent Finally, let $N$ be a closed smooth manifold. Since --by \cite[Lemma 6.2]{the99}-- the GFQIs for $\mathcal{O}_N$ are all equivalent, Theorem \ref{theret2} applies and gives the next existence and uniqueness result by C. Viterbo. We refer to \cite[Proposition 1.5]{vit92} for a direct proof of this theorem.
\begin{thm}[Viterbo] \label{Vit 92}
If $\varphi_H \in \mathrm{DHam}(T^*N)$, then $L = \varphi_H(\mathcal{O}_N)$ admits a unique GFQI in $\mathscr{G}(N)$.  
\end{thm}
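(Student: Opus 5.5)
The statement follows from the results already collected: existence from Theorem \ref{thm:lauSik}, uniqueness from Theorem \ref{theret2}. Both need only a base case, namely that the zero section $\mathcal{O}_N$ (with primitive $f_{\mathcal{O}_N}=0$, since $pdq$ vanishes on $\mathcal{O}_N$) has a GFQI, and that this GFQI is unique up to $\sim$.

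\emph{Existence.} For the base case we take $k=0$ and $S\equiv 0$ on $N$. Then $\Sigma_S=N$, $i_S(q)=(q,0)$ is an embedding, and $L_S=\Gamma_0=\mathcal{O}_N$. Condition $(i)$ holds vacuously. The quadratic-at-infinity requirement is vacuous for $k=0$, or we can stabilize to $S'(q;\eta)=\eta^TR\eta$ with $R$ nondegenerate. Since $S'$ vanishes on $\Sigma_{S'}=N\times\{0\}$, the brane primitive is the correct one. Applying Theorem \ref{thm:lauSik} to $L=\mathcal{O}_N$ and $\varphi_H$ then gives a GFQI for $\varphi_H(\mathcal{O}_N)$. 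By Remark \ref{nota supp com} we may assume $\varphi_H\in\mathrm{DHam}_c(T^*N)$, so nothing is lost by working with compactly supported isotopies.

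\emph{Uniqueness.} Here we invoke \cite[Lemma 6.2]{the99}: any GFQI $S\in\mathscr{G}_k(N)$ with $L_S=\mathcal{O}_N$ is equivalent, under stabilization and fibered diffeomorphism, to $\eta^TR\eta$. Theorem \ref{theret2} then transfers uniqueness from $\mathcal{O}_N$ to $\varphi_H(\mathcal{O}_N)$.

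If one wants more than a citation for this transfer, the mechanism is the fibration property of Theorem \ref{thm:theSerFib}. Take two GFQIs $S_0,S_1$ for $L=\varphi_H(\mathcal{O}_N)$ and the path $t\mapsto\varphi_H^{1-t}(\varphi_H^{-1}\ldots)$, more precisely the path of branes $L_t=\varphi^{t}_{H}(\mathcal{O}_N)$ run backwards from $L$ to $\mathcal{O}_N$. Lifting this path starting at $S_0$ and starting at $S_1$ yields two GFQIs for $\mathcal{O}_N$. These are equivalent by the base case. Concatenating the two lifts with this equivalence gives a loop in $\mathscr{L}(T^*N)$ that is null-homotopic, namely the path followed by its reverse, with a lift joining $S_0$ to $S_1$. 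The homotopy lifting property applied to $\Delta_1$ then produces a path in a single $\mathscr{G}_k(N)$ from $S_0$ to a stabilized $S_1$, all of whose members generate $L$. Such a path is integrated into a fibered diffeomorphism by a Moser-type argument.

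\emph{Main obstacle.} The hard part is the base case of uniqueness, that every GFQI of $\mathcal{O}_N$ is equivalent to a pure quadratic form. To prove it directly I would first use $\Sigma_S\cong N$ and the nondegeneracy of $\partial^2_\xi S$ along $\Sigma_S$ to write $S$, via a fiberwise Morse lemma near $\Sigma_S$, as a nondegenerate quadratic form $\xi^TA(q)\xi$ in normal coordinates. The remaining difficulty is to extend this normal form globally in $\xi$, matching the quadratic form $Q$ at infinity. That requires showing the fiber functions $S(q,\cdot)$ have a single critical point with the right index and no topology at infinity, which is where the homotopy-theoretic input of \cite{vit92,the99} is genuinely needed.
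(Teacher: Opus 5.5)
Your proposal is correct and follows the paper's route. The base case is that every GFQI of $\mathcal{O}_N$ is equivalent to a quadratic form \cite[Lemma 6.2]{the99}. Theorem \ref{theret2}, which rests on Theorem \ref{thm:lauSik} for existence and on the Serre fibration of Theorem \ref{thm:theSerFib} for uniqueness, then transfers existence and uniqueness to $\varphi_H(\mathcal{O}_N)$. Your extra sketch of the path-lifting-plus-Moser mechanism goes beyond what the paper writes: the paper simply cites \cite{the99} and refers to \cite[Proposition 1.5]{vit92} for a direct proof.
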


\section{Spectral invariants}\label{SPEC}
\begin{center}
\begin{minipage}{10 cm}
\textit{Contents of Section \ref{SPEC}}.
We introduce the geometry on the space of exact Lagrangian submanifolds (isotopic to the zero section of the cotangent bundle of a closed smooth manifold) and corresponding Lagrangian branes by using spectral invariants. These are numbers that --thanks to Lusternik-Schirelman's theory-- can be extracted from generating functions quadratic at infinity associated to a Lagrangian submanifold. The rest of the section is devoted to prove that these spectral invariants give rise to distances on the spaces of exact Lagrangian submanifolds and Lagrangian branes --denoted respectively by $(\mathfrak{L}_0(T^*N),\gamma)$ and $(\mathscr{L}_0(T^*N),c)$-- where the group of compactly supported Hamiltonian diffeomorphisms acts by isometries.
\end{minipage}
\end{center}

\subsection{Calculus of critical values} \label{PS critical}
Let $f \in C^{\infty}(M,\mathbb{R})$ be a smooth function on a Riemannian manifold $(M, g)$. If $M$ is not compact, we suppose that $f$ satisfies the following weaker condition.
\begin{defn} [Palais-Smale (PS) condition] Every sequence $(x_n)_{n \ge 1}$ such that
$$\vert{df(x_n)}\vert \to 0 \qquad {and} \qquad f(x_n) \text{ is bounded\, ,}$$
admits a converging subsequence.  
\end{defn}
\noindent We notice that the limit of $(x_n)_{n \ge 1}$ must obviously be
a critical point. Clearly, when $M$ is compact, any function is PS. If the PS condition is verified, even if the $M$ is non compact, for every choice of $a < b$ the set of critical points in $f^{-1}([a,b])$ is compact.
\begin{es}
\textnormal{The function $f(x) = \arctan x$ is not PS: $x_n = n$ is a PS sequence, but it does not admit converging subsequences, see Figure \ref{Arc Tan}.}
\end{es}

\begin{figure}
\centering
\includegraphics[height=4cm, width=5cm,
keepaspectratio]{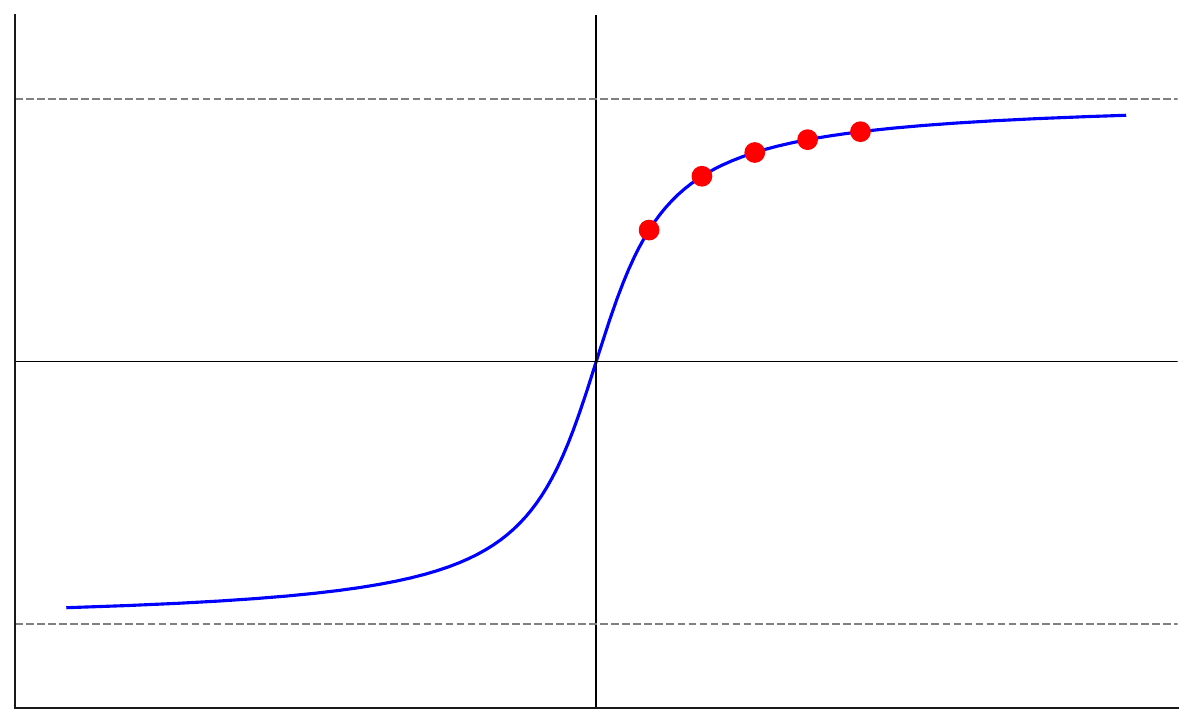}
\caption{$f(x) = \arctan x$ is not PS for the Euclidean metric on $\R^2$.}
\label{Arc Tan}
\end{figure}

A fundamental property of GFQIs is that they are PS for the product metric on $N \times \mathbb{R}^k$.

\begin{lem}
Let $S\in C^{\infty}(N \times \mathbb{R}^k,\mathbb{R})$, $(q,\xi) \mapsto S(q;\xi)$ be a GFQI. Then $S$ is PS.
\end{lem}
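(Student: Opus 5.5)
The plan is to split any Palais--Smale sequence according to whether its fibre component stays bounded. Let $(q_n,\xi_n)$ be a sequence with $\vert dS(q_n,\xi_n)\vert \to 0$ and $S(q_n,\xi_n)$ bounded. We use the product metric on $N\times\R^k$, where $N$ is closed, so $\vert dS\vert^2 = \vert\partial_q S\vert^2 + \vert \partial_\xi S\vert^2$. It suffices to show that $(\xi_n)$ is bounded. Indeed, $(q_n)$ lies in the compact manifold $N$, so the whole sequence would then lie in the compact set $N\times \overline{B}_R(0)$, and a convergent subsequence exists. The limit is automatically a critical point by continuity of $dS$.

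To prove boundedness of $(\xi_n)$, we use Definition \ref{QI}. There is $R>0$ such that $S(q;\xi)=\xi^TQ\xi$ for $\vert\xi\vert\ge R$, with $Q$ a non-degenerate symmetric matrix. On this region
\[
\partial_\xi S(q;\xi) = 2Q\xi , \qquad \vert \partial_\xi S(q;\xi)\vert \ge 2\mu \vert \xi\vert ,
\]
where $\mu>0$ is the smallest absolute value of an eigenvalue of $Q$; this uses non-degeneracy. Now suppose, passing to a subsequence, that $\vert \xi_n\vert\to+\infty$. Then for $n$ large we have $\vert \xi_n\vert \ge R$, and hence
\[
\vert dS(q_n,\xi_n)\vert \ge \vert\partial_\xi S(q_n,\xi_n)\vert \ge 2\mu\vert\xi_n\vert \to +\infty .
\]
This contradicts $\vert dS(q_n,\xi_n)\vert\to 0$. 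So $(\xi_n)$ is bounded, and the argument does not even need the boundedness of $S(q_n,\xi_n)$.

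I do not expect a genuine obstacle here. The only points needing care are that the PS condition depends on the metric, so we must fix the product of a Riemannian metric on $N$ with the Euclidean metric on $\R^k$, and that the gradient of the quadratic form is bounded below linearly because $Q$ is non-degenerate. If one wanted the asymptotically quadratic version (Definition \ref{AQI}), the same argument works, provided the $C^1$ bound on $S-Q$ is uniform in $q$ (for instance by compactness of $N$). Alternatively, one can first reduce to a GFQI via Lemma \ref{serve dopo 1}.
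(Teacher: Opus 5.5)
Your proof is correct and follows essentially the same route as the paper. In both, a PS sequence with unbounded $\xi_n$ would, in the region $\vert\xi\vert\ge R$ where $\partial_\xi S=2Q\xi$, force $\vert dS(q_n,\xi_n)\vert\to+\infty$ by non-degeneracy of $Q$, so $(\xi_n)$ is bounded and compactness of $N$ gives a convergent subsequence. Your version is slightly tidier: it proves boundedness directly with the explicit bound $2\mu\vert\xi\vert$, and you rightly note that the bound on $S(q_n,\xi_n)$ is never used.
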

\begin{proof}
Let $(q_n, \xi_n)_{n \ge 1}$ be a sequence satisfying
$$\vert{dS(q_n,\xi_n)}\vert \to 0 \qquad {and} \qquad S(q_n,\xi_n) \text{ bounded}\, .$$
If the sequence $(\xi_n)_{n \ge 1}$ is contained in a compact set, then $(q_n, \xi_n)_{n\ge 1}$ admits 
a converging subsequence (because $N$ is compact). Let verify that this is the unique possible case. Remember that $S(q_n,\xi_n) = \xi_n^T Q \xi_n$ for $|\xi_n| \ge R$. It there were only finite terms of the sequence $(\xi_n)_{n \ge 1}$ in every compact set, it would follow that $\lim_{n \to +\infty} |\xi_n| = +\infty$. But this is not possible because, since $\partial_{\xi}S(q,\xi_n) = 2 Q \xi_n$ (for $|\xi_n| \ge R$) and $Q$ is non degenerate, $\vert{dS(q_n,\xi_n)}\vert$ would tend to $+\infty$ and not to $0$.
\end{proof}
\noindent From now on, given $t \in \mathbb{R}$, 
$$f^t := \set{ x \in M: f(x) \le t}\, ,$$ 
and we omit the choice of the Riemannian metric of the manifold except in case of ambiguity. 
\begin{defn} [min-max] For every $0 \ne \alpha \in H^*(f^b,f^a)$, we set
\begin{equation} \label{min-max}
c(\alpha,f) := \inf \set{t \in [a,b]: i^*_{t} \alpha \ne 0}\, ,
\end{equation}
where $i_t^*: H^*(f^b,f^a) \to H^*(f^{t},f^a)$.     
\end{defn}

\noindent The above definition is useful in order to determine critical values of $f$. Indeed --following ideas going back to Poincar\'e and Birkhoff-- we have:

\begin{prp}
If $f \in C^{\infty}(M,\mathbb{R})$ is PS then $c(\alpha,f)$ defined by (\ref{min-max}) is a critical value of $f$, that is $f^{-1}(c(\alpha,f))$ contains a critical point.
\end{prp}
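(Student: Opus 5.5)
We argue by contradiction, using the standard deformation lemma built from the negative gradient flow of $f$. Set $c := c(\alpha,f)$ and assume $f^{-1}(c)$ contains no critical point.

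\emph{Step 1: a gap for $|df|$ near level $c$.} We claim there exist $\varepsilon>0$ and $\delta>0$ such that $|df(x)|\ge\delta$ whenever $|f(x)-c|\le\varepsilon$. If not, we get a sequence $x_n$ with $f(x_n)\to c$ and $|df(x_n)|\to 0$. This is a Palais--Smale sequence, so by (PS) a subsequence converges to some $x$. By continuity $x$ is a critical point with $f(x)=c$, which is a contradiction. We may shrink $\varepsilon$ so that $[c-\varepsilon,c+\varepsilon]$ meets $[a,b]$ appropriately. The boundary cases $c=a$ or $c=b$ need a little care, for instance treating the endpoints by one-sided versions of the argument.

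\emph{Step 2: the deformation.} Choose a cutoff $\rho$ that equals $1$ on $f^{-1}([c-\varepsilon,c+\varepsilon])$ and vanishes outside $f^{-1}([c-2\varepsilon,c+2\varepsilon])$, with $\varepsilon$ reduced if needed so that Step 1 holds on the larger band. Consider the vector field
$$X=-\rho\,\nabla f/|\nabla f|^2 .$$
It is bounded by $1/\delta$, so its flow $\phi_s$ is complete even when $M$ is noncompact; this is where we use that bounded vector fields on complete manifolds integrate globally.

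Along the flow, $\tfrac{d}{ds}f(\phi_s(x))=-\rho\le 0$. Hence $\phi_{2\varepsilon}$ maps $f^{c+\varepsilon}$ into $f^{c-\varepsilon}$ and fixes $f^{a}$ pointwise, provided $c-2\varepsilon>a$. Indeed, a point starting at level at most $c+\varepsilon$ either decreases at unit speed while inside the band, or is already below the band and stays there.

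\emph{Step 3: conclusion in cohomology.} The inclusion $i_{c+\varepsilon}:(f^{c+\varepsilon},f^a)\to(f^b,f^a)$ is homotopic, through $\phi_s$ restricted to $f^{c+\varepsilon}$ for $s\in[0,2\varepsilon]$, to a map of pairs that factors through $(f^{c-\varepsilon},f^a)$. The flow preserves $f^b$, since $f$ is non-increasing along it, and fixes $f^a$. Therefore $i^*_{c+\varepsilon}\alpha$ factors through $i^*_{c-\varepsilon}\alpha$.

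By the definition of $c$ as an infimum, $i^*_{c-\varepsilon}\alpha=0$. Hence $i^*_{c+\varepsilon}\alpha=0$. Moreover, $i^*_t\alpha\ne0$ forces $i^*_{t'}\alpha\ne 0$ for all $t'\ge t$, because restriction factors through larger sublevels. So $i^*_t\alpha=0$ for every $t\le c+\varepsilon$, contradicting the fact that $c$ is the infimum.

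The main obstacle is Step 1 together with the completeness of the flow in the noncompact case. This is precisely where (PS) replaces compactness. We also need the endpoint case $c=a$ (respectively $c=b$) to be handled, e.g. by noting that $i^*_b\alpha=\alpha\neq0$ and working with $\varepsilon$ chosen inside $[a,b]$.
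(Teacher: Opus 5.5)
Your proof is correct and follows the same route as the paper: a Milnor-type deformation along $-\nabla f/|\nabla f|^2$, where (PS) gives a uniform lower bound on $|\nabla f|$ near level $c$, hence a bounded field with complete flow. That flow pushes $f^{c+\varepsilon}$ into $f^{c-\varepsilon}$ and contradicts the definition of $c$ as an infimum. Your cutoff $\rho$ and your factorization of $i^*_{c+\varepsilon}$ through $(f^{c-\varepsilon},f^a)$ make precise two points the paper glosses over: its field ``extended by zero'' is not continuous, and only this factorization, not the isomorphism it claims, is needed. The endpoint cases go through the same way with $\min\{c+\varepsilon,b\}$ and $\max\{c-\varepsilon,a\}$, since the flow preserves $f^a$ and $f^b$.
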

\begin{proof} Assume by contradiction that $c := c(\alpha,f)$ is not critical. Since the set of critical points of $f$ in $f^{-1}([a,b])$ is closed there exists $\varepsilon > 0$ such that $[c-\varepsilon,c+\varepsilon]$ does not contain critical values of $f$. In such a case, as in \cite[Theorem 3.2]{mil63} we push $f^{c + \varepsilon}$ down to $f^{c-\varepsilon}$ along the  trajectories of the ``normalized gradient'' of $f$.
Define the vector field on $f^{c+\varepsilon}\setminus f^{c-\varepsilon}$:
    \begin{equation} \label{normalized gradient}
        X(x):=-\frac{\nabla f(x)}{\vert\nabla f(x)\vert^2}\,,
    \end{equation}
ad extend it smoothly by zero on the rest of $M$. See Figure \ref{milnorField}.
Thanks to the PS condition $\vert X(x)\vert= \frac{1}{\vert \nabla f(x)\vert} $ is bounded, hence its flow $\varphi^t_X$ is complete. Moreover, we have:
    \begin{equation}
        \frac{d}{dt}f(\varphi^t_X(x))=df(X(\varphi^t_X(x))=-1\, ,
    \end{equation}
so that $f(\varphi^t_X(x)) = f(x) -t$. This means that
$$\varphi^{2\varepsilon}_X(f^{c+\varepsilon}) = f^{c -\varepsilon} \quad \Rightarrow \quad H^*(f^{c+\varepsilon}, f^{a}) \cong H^*(f^{c-\varepsilon}, f^a)\, .$$
It then follows that $i^*_{c-\varepsilon} \alpha = 0$ implies $i^*_{c+\varepsilon} \alpha = 0$ so that $c(\alpha,f) \ge c + \varepsilon$. This contradicts the definition of $c$. 
\end{proof}
\begin{figure}[ht]
    \centering
    %% Creator: Inkscape 1.2.2 (b0a8486541, 2022-12-01), www.inkscape.org
%% PDF/EPS/PS + LaTeX output extension by Johan Engelen, 2010
%% Accompanies image file 'milnorField.pdf' (pdf, eps, ps)
%%
%% To include the image in your LaTeX document, write
%%   \input{<filename>.pdf_tex}
%%  instead of
%%   \includegraphics{<filename>.pdf}
%% To scale the image, write
%%   \def\svgwidth{<desired width>}
%%   \input{<filename>.pdf_tex}
%%  instead of
%%   \includegraphics[width=<desired width>]{<filename>.pdf}
%%
%% Images with a different path to the parent latex file can
%% be accessed with the `import' package (which may need to be
%% installed) using
%%   \usepackage{import}
%% in the preamble, and then including the image with
%%   \import{<path to file>}{<filename>.pdf_tex}
%% Alternatively, one can specify
%%   \graphicspath{{<path to file>/}}
%% 
%% For more information, please see info/svg-inkscape on CTAN:
%%   http://tug.ctan.org/tex-archive/info/svg-inkscape
%%
\begingroup%
  \makeatletter%
  \providecommand\color[2][]{%
    \errmessage{(Inkscape) Color is used for the text in Inkscape, but the package 'color.sty' is not loaded}%
    \renewcommand\color[2][]{}%
  }%
  \providecommand\transparent[1]{%
    \errmessage{(Inkscape) Transparency is used (non-zero) for the text in Inkscape, but the package 'transparent.sty' is not loaded}%
    \renewcommand\transparent[1]{}%
  }%
  \providecommand\rotatebox[2]{#2}%
  \newcommand*\fsize{\dimexpr\f@size pt\relax}%
  \newcommand*\lineheight[1]{\fontsize{\fsize}{#1\fsize}\selectfont}%
  \ifx\svgwidth\undefined%
    \setlength{\unitlength}{110.35842295bp}%
    \ifx\svgscale\undefined%
      \relax%
    \else%
      \setlength{\unitlength}{\unitlength * \real{\svgscale}}%
    \fi%
  \else%
    \setlength{\unitlength}{\svgwidth}%
  \fi%
  \global\let\svgwidth\undefined%
  \global\let\svgscale\undefined%
  \makeatother%
  \begin{picture}(1,0.87033792)%
    \lineheight{1}%
    \setlength\tabcolsep{0pt}%
    \put(0.66371378,0.4067327){\color[rgb]{0,0,0}\makebox(0,0)[lt]{\lineheight{1.25}\smash{\begin{tabular}[t]{l}$f=c$\end{tabular}}}}%
    \put(0.69094333,0.56410618){\color[rgb]{0,0,0}\makebox(0,0)[lt]{\lineheight{1.25}\smash{\begin{tabular}[t]{l}$f=c+\varepsilon$\end{tabular}}}}%
    \put(0.71347265,0.24396058){\color[rgb]{0,0,0}\makebox(0,0)[lt]{\lineheight{1.25}\smash{\begin{tabular}[t]{l}$f=c-\varepsilon$\end{tabular}}}}%
    \put(0,0){\includegraphics[width=\unitlength,page=1]{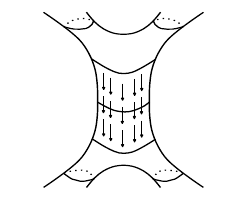}}%
    \put(0.29201078,0.46201156){\color[rgb]{0,0,0}\makebox(0,0)[lt]{\lineheight{1.25}\smash{\begin{tabular}[t]{l}$X$\end{tabular}}}}%
    \put(0,0){\includegraphics[width=\unitlength,page=2]{milnorField.pdf}}%
    \put(-0.00510778,0.71107596){\color[rgb]{0,0,0}\makebox(0,0)[lt]{\lineheight{1.25}\smash{\begin{tabular}[t]{l}$f$\end{tabular}}}}%
  \end{picture}%
\endgroup%

    \caption{The vector field $X$, transverse to the level hypersurfaces of $f$.}
    \label{milnorField}
\end{figure}

\noindent We shall now refine the above proposition to the Lusternik-Schnirelmann theorem (we refer e.g. to \cite[Proposition 2.2]{vit92}). To this aim, we need to specify which cohomology theory we use, namely the Alexander-Spanier cohomology (see \cite[p. 307]{Spanier}), whose main feature is that  the cohomology of a closed set is the limit of the cohomologies of its open neighbourhoods:
\begin{equation*}
    H^*(K):=\lim_{\substack{\longrightarrow \\ U\supset K \\ U \ \mathrm{open}}} H^*(U)\, .
\end{equation*}
We indicate by $\alpha\cup\beta$ the cup product of the cohomology classes $\alpha$ and $\beta$.
\begin{thm} [Lusternik-Schnirelmann] \label{thm:lusSch}
Let $f \in C^{\infty}(M,\mathbb{R})$ satisfy PS. Let $0 \ne \alpha \in H^*(f^b,f^a)$, and $\beta \in H^*(M)$. Then:
\begin{itemize}
\item[$(i)$] $c(\alpha\cup \beta,f) \ge c(\alpha,f)$.
\item[$(ii)$] If $c(\alpha \cup \beta,f) = c = c(\alpha,f)$ for $\beta\not\in H^0(M)$, then $\beta$ is non zero in 
$$K_c := H^*(f^{-1}(c) \cap \text{Crit}(f))\, ,$$ 
and the common critical value contains infinitely many critical points. 
\end{itemize}
\end{thm}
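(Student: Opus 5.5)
For $(i)$, I will use the naturality of the cup product with respect to restriction. Let $i_t:(f^t,f^a)\to(f^b,f^a)$ and $j_t:f^t\to M$ be the inclusions. For $t<c(\alpha,f)$ we have $i_t^*\alpha=0$, and therefore
$$i_t^*(\alpha\cup\beta)=i_t^*\alpha\cup j_t^*\beta=0 .$$
Hence every $t$ with $i_t^*(\alpha\cup\beta)\neq 0$ satisfies $t\ge c(\alpha,f)$. Taking the infimum gives $c(\alpha\cup\beta,f)\ge c(\alpha,f)$. No PS condition is needed for this part.

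For $(ii)$, set $K:=f^{-1}(c)\cap\mathrm{Crit}(f)$. By PS, $K$ is compact, and it is nonempty by the preceding Proposition. I will argue by contradiction and assume that $\beta|_K=0$ in the Alexander--Spanier cohomology $H^*(K)$. By the defining continuity property (the direct limit over open neighbourhoods), there is an open neighbourhood $U\supset K$ with $\beta|_U=0$. Next I will prove a deformation lemma: for small $\varepsilon>0$ there is a deformation of $M$ carrying $f^{c+\varepsilon}$ into $f^{c-\varepsilon}\cup U$, and this deformation fixes $f^a$ since $a<c-\varepsilon$.

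The deformation lemma is the technical core and the step I expect to be the main obstacle. I would first shrink $U$ to a smaller neighbourhood $V$ with $\overline V\subset U$. By PS, $|\nabla f|\ge\delta>0$ on $f^{-1}([c-\varepsilon,c+\varepsilon])\setminus V$ for $\varepsilon$ small; if this failed, a PS sequence would converge to a point of $K$, which lies inside $V$. I then flow along the cut-off normalized gradient field of (\ref{normalized gradient}), multiplied by a bump function that vanishes near $K$ and equals $1$ outside $U$. Choosing $\varepsilon<\delta\cdot\mathrm{dist}(V,M\setminus U)$ ensures that points cannot leave $U$ while still above level $c-\varepsilon$ without descending, exactly as in Milnor's argument. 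A cleaner alternative is to choose $\varepsilon$ and the flow time so that any trajectory starting outside $U$ reaches $f^{c-\varepsilon}$. On this point I will follow the standard treatment of \cite[Proposition 2.2]{vit92}.

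Granting the lemma, write $\beta=\beta_1+\beta_2$ through a Mayer--Vietoris argument on $f^{c+\varepsilon}\subset f^{c-\varepsilon}\cup U$. By the definition of $c$, $\alpha$ restricts to zero on $(f^{c-\varepsilon},f^a)$, so it lifts to a class $\tilde\alpha$ in $H^*(f^{c+\varepsilon},f^{c-\varepsilon})$. Meanwhile $\beta$ vanishes on $U$, so it lifts to a class $\tilde\beta$ in $H^*(f^{c+\varepsilon},U\cap f^{c+\varepsilon})$. The relative cup product then gives
$$\tilde\alpha\cup\tilde\beta\in H^*\bigl(f^{c+\varepsilon},(f^{c-\varepsilon}\cup U)\cap f^{c+\varepsilon}\bigr)$$
relative to $f^a$ (I will include $f^a$ in the pair). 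This group maps to $H^*(f^{c+\varepsilon},f^a)$, and via the deformation the whole pair retracts to the subspace, so the relative group vanishes. Hence $(\alpha\cup\beta)|_{f^{c+\varepsilon}}=0$, which gives $c(\alpha\cup\beta,f)\ge c+\varepsilon$ and contradicts the assumption.

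Finally, $\beta\notin H^0(M)$ means $\beta$ has positive degree. If $K$ were finite, then $H^{>0}(K)=0$, so $\beta|_K=0$, contradicting what was just proved. Thus $K$ is infinite, and the common critical level contains infinitely many critical points.
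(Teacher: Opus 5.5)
Your argument is correct and is essentially the paper's: $(i)$ follows from naturality of the cup product under restriction, and $(ii)$ goes by contradiction, taking a neighbourhood $U$ of $K_c$ on which $\beta$ vanishes, using the (cited) deformation lemma to push $f^{c+\varepsilon}$ into $U\cup f^{c-\varepsilon}$, and killing $\alpha\cup\beta$ on $f^{c+\varepsilon}$ through the cup product of the lifts $\tilde\alpha\in H^*(f^{c+\varepsilon},f^{c-\varepsilon})$ and $\tilde\beta\in H^*(f^{c+\varepsilon},U\cap f^{c+\varepsilon})$. You are more explicit than the paper on this relative cup product and on why $K_c$ must be infinite (a finite set has no cohomology in positive degree), which the paper asserts without argument.
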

\begin{proof}
\noindent $(i)$ Note that  $\alpha \cup \beta \in H^*(f^b,f^a)$ since relative cohomology is stable with respect to the cup product with absolute cohomology. The inequality is then obvious since, if $\alpha = 0$ in $H^*(f^{t},f^a)$, then also $\alpha \cup \beta = 0$ in $H^*(f^{t},f^a)$. \\
$(ii)$ Let $c(\alpha \cup \beta,f) = c = c(\alpha,f)$. Arguing by contradiction, suppose there exists a neighborhood $U$ of $K_c$ such that $\beta = 0$ in $H^*(U)$ for some neighbourhood $U$ of $K_c$.
Notice that --by a deformation argument (see \cite[Theorem 7.6]{Car})-- there exists a small $\varepsilon > 0$ such that $U \cup f^{c-\varepsilon}$ is a deformation retraction of $f^{c+\varepsilon}$, so that $H^*(f^{c+\varepsilon}, U \cup f^{c-\varepsilon}) = 0$. Since $\alpha=0$ in $H^*(f^{c- \varepsilon})$ and $\beta$ vanishes on $U$, $\alpha \cup \beta = 0$ in $U \cup f^{c-\varepsilon}$ and this implies $\alpha \cup \beta = 0$ in $f^{c+\varepsilon}$ and therefore $c(\alpha \cup \beta,f) \ge c(\alpha,f) + \varepsilon$. contradicting the assumption $c(\alpha \cup \beta,f) = c(\alpha,f)$.
\end{proof}
 \subsection{Spectral invariants for Lagrangians}
The aim of the this section is to apply to GFQIs the calculus of critical values introduced above. \\
\indent Let $L \subset T^*N$ be a Lagrangian generated by a GFQI $S \in C^{\infty}(N \times \mathbb{R}^k,\mathbb{R})$. Whenever we write $S^{\pm\infty}$, we mean $S^{\pm c}$ for any $c > 0$ sufficiently large. Since $N$ is compact and $S(q,\xi) = \xi^T Q \xi$ for $\vert\xi\vert > c$, by Thom isomorphism (see e.g. \cite[Theorem 6.17]{Bott-Tu}):
\begin{equation} \label{iso Thom}
H^*(S^\infty, S^{-\infty})\cong H^*(Q^\infty, Q^{-\infty})\cong H^{*-i}(N)\, ,   
\end{equation}
where $i = i(Q)$ = number of negative eigenvalues of $Q$. The calculus of critical values explained in Section \ref{PS critical} and the isomorphisms (\ref{iso Thom}) above justify the next definition. 
\begin{defn}
Let $S \in C^{\infty}(N \times \mathbb{R}^k, \mathbb{R})$ be a GFQI and $\alpha\in H^*(N) \setminus \{0\}$. Then we set
\begin{equation}
c(\alpha, S):=c(\tilde{\alpha}, S)\, ,
\end{equation}
where $\tilde{\alpha} \in H^*(S^\infty, S^{-\infty})$ is the image of $\alpha$ under the isomorphism (\ref{iso Thom}).
\end{defn}

\noindent A fundamental property of $c(\alpha, S)$ is that it is invariant with respect to the two operations on generating functions (stabilization and fibered diffeomorphisms). The next definition and proposition will be used in the proof of this fact. 
\begin{defn}
Let $(X,A)$ be a pair of topological spaces. We call $(X,A)$ compact pair if $\overline{X \setminus A}$ is compact. We call $(X,A)$ homotopy compact pair if it is properly homotopy equivalent to a compact pair.
\end{defn}

\begin{lem}
\label{aiuto Claude}
If $(X,A)$ is a homotopy compact pair then $H^*_c(X,A) \cong H^*(X,A)$. 
\end{lem}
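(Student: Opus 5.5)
The plan is to reduce the statement to the case of a genuine compact pair and then compare compactly supported cohomology with ordinary cohomology there. Here $H^*_c(X,A)$ is read as the direct limit over compact subsets $K\subset X$ of $H^*(X,A\cup(X\setminus K))$, the standard definition that fits Alexander--Spanier cohomology. There are three steps, in this order.

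\textbf{Step 1: invariance of both sides.} First I check that both functors are invariant under proper homotopy equivalences of pairs. For $H^*(X,A)$ this is ordinary homotopy invariance, since a proper homotopy equivalence is in particular a homotopy equivalence of pairs. For $H^*_c$ it is the usual invariance of compactly supported (Alexander--Spanier) cohomology under proper maps and proper homotopies. The key point is that preimages of compacta are compact, so complements of compacta pull back to complements of compacta. This turns the directed systems defining $H^*_c$ into one another, compatibly with the homotopies.

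\textbf{Step 2: the compact case.} Next, let $(X,A)$ be a compact pair and set $C=\overline{X\setminus A}$, which is compact. Any compact $K\supseteq C$ satisfies $X\setminus K\subseteq A$, so
\[
H^*(X,A\cup(X\setminus K))=H^*(X,A).
\]
Such $K$ are cofinal in the directed system of compact subsets, so the direct limit equals $H^*(X,A)$. Moreover, the natural map $H^*_c(X,A)\to H^*(X,A)$ is exactly this identification.

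\textbf{Step 3: transport and naturality.} Finally, given a proper homotopy equivalence $(X,A)\simeq (Y,B)$ with $(Y,B)$ compact, I combine Step 1 with Step 2. Naturality of the forget-supports map $H^*_c\to H^*$ with respect to proper maps then gives a commutative square. In that square three arrows are isomorphisms, so the fourth is as well.

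The main obstacle is Step 1 for $H^*_c$. One has to ensure that the homotopies themselves are proper, so that they induce equal maps on the direct limit over compacta, and that this is compatible with the subspace $A$. I would handle this by viewing a proper homotopy as a proper map $X\times[0,1]\to Y$ and using that the inclusions at $0$ and $1$ are proper homotopy equivalences. For the paper's application, where $X=S^b$ and $A=S^a$ are sublevel sets of a GFQI, it may be simpler to bypass the abstract statement. There one can deform $S^b$ onto $S^a\cup(\text{compact set})$ directly with the normalized gradient flow (\ref{normalized gradient}) outside a large ball, where $S$ is the quadratic form. This exhibits the pair concretely as a compact pair up to proper homotopy, so that Step 2 applies verbatim.
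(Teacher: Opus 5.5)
Your proposal is correct and follows the paper's strategy: transport both $H^*_c$ and $H^*$ along the proper homotopy equivalence to a compact pair $(Y,B)$, then check that $H^*_c=H^*$ there. The paper also takes proper homotopy invariance of $H^*_c$ for granted. The only variation is in the compact case. The paper excises $W=Y\setminus V$, for a compact neighbourhood $V$ of $\overline{Y\setminus B}$, in both theories. You instead note that the direct system $H^*(Y,B\cup(Y\setminus K))$ is constant on the cofinal family $K\supseteq\overline{Y\setminus B}$, which avoids excision and also shows that the forget-supports map is the isomorphism.
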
 
\begin{proof}
Assume that $(X,A)$ is properly homotopic to $(Y,B)$: $\overline{Y\setminus B}$ is compact, and $H^*_c(X,A) \cong H_c^*(Y,B)$. Let $V$ be a compact neighbourhood of $\overline{Y\setminus B}$ and $W=Y\setminus V$. Since $Y\setminus W = V$ is compact: $$H_c^*(Y,B) \cong H_c^*(Y\setminus W, B\setminus W) \cong H^*(Y\setminus W, B\setminus W)\, ,$$
by excision in cohomology with compact support (see \cite[Lemma 9, p. 320]{Spanier}. On the other hand, we obviously have $H^*(X,A) \cong H^*(Y,B)=H^*(Y\setminus W, B\setminus W)$ by standard invariance by homotopy and excision for ordinary cohomology. At the end, by using the assumption $H^*_c(X,A) \cong H_c^*(Y,B)$, we conclude that $H^*_c(X,A) \cong H^*(X,A)$.
\end{proof}

\begin{prp} \label{invariance 2 op} Let $S,S'$ be two GFQI differing by stabilization and fibered diffeomorphism and  let $\alpha\in H^*(N) \setminus \{0\}$. Then $c(\alpha, S)=c(\alpha, S')$. 
\end{prp}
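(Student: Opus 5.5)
It suffices to treat the two operations separately, since the general case follows by composing them.

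\emph{Fibered diffeomorphism.} Let $S'(q;\xi)=S(q;\phi(q,\xi))$ and set $\Phi(q,\xi)=(q,\phi(q,\xi))$. Then $\Phi$ is a diffeomorphism of $N\times\mathbb{R}^k$ and $S'=S\circ\Phi$. Hence $\Phi$ maps $(S')^t$ onto $S^t$ for every $t$. It therefore induces isomorphisms $H^*(S^t,S^{-\infty})\cong H^*((S')^t,(S')^{-\infty})$, and these commute with the restriction maps $i_t^*$.

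What must be checked is that the top-level isomorphism $H^*(S^\infty,S^{-\infty})\cong H^*((S')^\infty,(S')^{-\infty})$ sends $\tilde\alpha$ to $\tilde\alpha'$, i.e.\ that it is compatible with the Thom isomorphisms (\ref{iso Thom}). Both Thom classes are determined by the quadratic form at infinity and the projection to $N$. Since $\Phi$ preserves fibers and covers the identity of $N$, the induced map on $H^{*-i}(N)$ is the identity.

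To make this rigorous I would deform $\phi$ through fibered diffeomorphisms to one which is linear outside a compact set. Alternatively, I would connect $\phi(q,\cdot)$ to the identity or to a fixed linear isomorphism $A$ with $A^TQ'A=Q$. Along such a family, the sublevel pairs at $\pm\infty$ vary by proper homotopy equivalences over $N$, so the identification is constant. Given the compatibility, $i_t^*\tilde\alpha\neq0$ if and only if $i_t^*\tilde\alpha'\neq0$, and hence $c(\alpha,S)=c(\alpha,S')$.

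\emph{Stabilization.} Let $S'=S\oplus R$ with $R(\eta)=\eta^TR\eta$ on $\mathbb{R}^l$, of index $j$. By a linear change of variables in $\eta$, write $R=|\eta_+|^2-|\eta_-|^2$. I would then establish the Künneth-type isomorphism
$$H^*((S')^t,(S')^{-\infty})\cong H^*(S^t,S^{-\infty})\otimes H^*(R^{\infty},R^{-\infty}) = H^{*-j}(S^t,S^{-\infty}),$$
naturally in $t$. The natural route is the standard deformation retraction: first retract $\eta_+$ to $0$, which does not increase $S'$. Then observe that $((S')^t,(S')^{-\infty})$ is homotopy equivalent to $(S^t\times D^j,\,S^t\times\partial D^j\cup S^{-\infty}\times D^j)$, and apply the relative Künneth formula. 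Here Lemma \ref{aiuto Claude} (homotopy compact pairs) is used so that one may pass between compactly supported and ordinary cohomology. This is needed to cut off to a disc in $\eta_-$ and to excise the region at infinity.

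The same product decomposition applied to $Q\oplus R$ shows that the Thom isomorphism for $S'$ is the composite of the Thom isomorphism for $S$ with the suspension by the Thom class of $R$. Hence $\tilde\alpha'$ corresponds to $\tilde\alpha\otimes u_R$, and $i_t^*\tilde\alpha'\neq0$ if and only if $i_t^*\tilde\alpha\neq0$.

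\emph{Main obstacle.} I expect the main difficulty to be making the deformation $((S')^t,(S')^{-\infty})\simeq(S^t,S^{-\infty})\times(D^j,\partial D^j)$ work uniformly in $t$, including near the level $-\infty$. One must arrange that the retraction preserves the lower sublevel set, and that all the identifications commute with the restriction maps $i_t^*$. I would handle this by using the explicit radial homotopies in $\eta_\pm$, which only decrease $S'$, together with excision.
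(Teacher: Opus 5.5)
Your argument is correct in substance. For the only nontrivial case, stabilization by a negative form, you take a different route from the paper.

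\emph{The paper's route.} It reduces to rank one. For $S+\eta^2$ it uses your retraction: the fibres of $(S')^b\to S^b$ are intervals. For $S-\eta^2$ it avoids any direct analysis of the negative direction by using Alexander duality in $E=N\times\mathbb{R}^k$. Lemma \ref{aiuto Claude} removes the compact-support issue for the homotopy compact pair $(S^b,S^a)$, giving $H^*(S^b,S^a)\cong H_{n+k-*}((-S)^{-a},(-S)^{-b})$. Since $-S'=-S+\eta^2$, the positive case applies to $-S$, and dualizing back gives $H^*(S^b,S^a)\cong H^{*+1}((S')^b,(S')^a)$.

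\emph{Your route.} You compare $((S')^t,(S')^{-c})$ directly with the product pair $(S^t,S^{-c})\times(D_\rho,\partial D_\rho)$, $\rho^2\ge t+c$, and apply the relative K\"unneth formula.

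\emph{What each buys.} The paper's trick is shorter, because the only deformation specific to stabilization is the trivial contraction in the positive direction. Yours costs the radial push in $\eta_-$ plus excision, which you rightly flag as the main work. All you need is that the inclusion of the product pair induces an isomorphism in cohomology, naturally in $t$. Ordinary excision suffices for this, so Lemma \ref{aiuto Claude} is really only needed in the duality route, where compact supports enter. In exchange, your route makes explicit what the paper leaves implicit: the isomorphisms commute with the maps $i_t^*$ and carry $\tilde\alpha$ to $\tilde\alpha'$. This holds because the Thom class of $Q\oplus R$ is the cross product of the Thom classes of $Q$ and $R$.

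\emph{Fibered diffeomorphisms.} Your concern about compatibility with (\ref{iso Thom}) is legitimate; the paper simply calls this case obvious. However, deforming $\phi$ to a linear map is unnecessary and awkward. The intermediate functions $S\circ\Phi_s$ need not be quadratic at infinity. Also, $S'=S\circ\Phi$ gives $Q'=A^TQA$, not $A^TQ'A=Q$. A direct argument suffices. Since $\pi\circ\Phi=\pi$, the map $\Phi^*$ is $H^*(N)$-linear. It sends the Thom class $u$ to a class restricting to a generator on each fibre, hence to $\pm u'$ (for $N$ connected). Therefore $\Phi^*\tilde\alpha=\pm\tilde\alpha'$, which is all that matters for the vanishing of $i_t^*$. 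The sign can genuinely occur, for instance for $\phi(q,\xi)=-\xi$ with $i$ odd. So your claim that the induced map on $H^{*-i}(N)$ is ``the identity'' holds only up to sign.
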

\begin{proof}
\noindent This property is obvious 
$S$ and $S'$ differ for a fibered diffeomorphism, since such a diffeomorphism induces one between the sub-level sets $S^t$ and $(S')^t$. \\
\noindent Consequently, we only need to check what happens if we stabilize the GFQI $S(q;\xi)$. Without loss of generality, we may assume the nondegenerate quadratic form $\eta^T R \eta$ to be defined on a trivial bundle of rank $1$, that is $\pm \eta^2$. \\
We first consider
$$S'(q;\xi,\eta) = S(q;\xi) + \eta^2\, .$$
In such a case, $(S')^b$ [resp. $(S')^a$] projects onto $S^b$ [resp. $S^a$], the fibers being contractible. Consequently, this projection is a homotopy equivalence, so that
$H^*((S')^b, (S')^a)\cong H^*(S^b, S^a)$. \\
The argument for
$$S'(q;\xi,\eta) = S(q;\xi) - \eta^2$$
is analogous by using Alexander duality (see \cite[Theorem 16 at page 296]{spa95}) as we now explain. Set $E = N \times \mathbb{R}^k$, $\textnormal{dim}E = n+k$, then by Alexander duality:
$$H^*(S^b,S^a) \cong H_{n+k-*} (E \setminus S^a, E \setminus S^b)\, .$$
Notice that, in the previous formula, we have ignored the requirement of compact supports because we can apply Lemma \ref{aiuto Claude} to the homotopy compact pair $(S^b,S^a)$. In further details, the fact that $(S^b,S^a)$ is a homotopy compact pair can be justified as follows. 

Thanks to the PS condition, the set of critical points of $S$ in $S^b \setminus S^a$ is compact and $\varphi_X^t$, the flow of the vector field (\ref{normalized gradient}), is complete. Consequently, we can deform $S^b$ into $\tilde{S}^b$ in order that $\tilde{S}^b \setminus S^a$ contains all critical points of $S$ in $S^b \setminus S^a$ and $\overline{\tilde{S}^b \setminus S^a}$ is compact. This proves that $(S^b,S^a)$ is properly homotopy equivalent to the compact pair $(\tilde{S}^b,S^a)$. \\
Since now $E \setminus S^{t} = (-S)^{-t}$, the right hand side of the above formula can be written as:
$$H_{n+k-*} (E \setminus S^a, E \setminus S^b) = H_{n+k-*}((-S)^{-a},(-S)^{-b})\, .$$
Notice that
$$H_{n+k-*}((-S)^{-a},(-S)^{-b}) \stackrel{\stackrel{-S' = -S +\eta^2}{\uparrow}} {\cong} H_{n+k-*}((-S')^{-a},(-S')^{-b})\, ,$$
and, applying Alexander duality again,
$$H_{n+k-*}((-S')^{-a},(-S')^{-b}) \cong H^{* + 1}((S')^b,(S')^a)\, .$$
We remark that the degree shift is due to the fact that the total dimension has increased by $1$ as an effect of the given stabilization of the GFQI. In conclusion:
$$H^*(S^b,S^a) \cong  H^{* + 1}((S')^b,(S')^a)\, ,$$
which is the desired result.
\end{proof}
\noindent In what follows, we denote by $\mathfrak{L}_0(T^*N)$ the set of Lagrangians $L$ in $\mathfrak{L}(T^*N)$ which are Hamiltonian isotopic to the zero section of $T^*N$, and by ${\mathscr L}_0(T^*N)$ the set of branes over an element of $\mathfrak L_0(T^*N)$:
$$\mathscr{L}_0(T^*N) := \set{ (L, f_L): L \in \mathfrak{L}_0(T^*N) }\, .$$
The choice of a GFQI for $L \in \mathfrak{L}_0(T^*N)$ provides a primitive $f_L$ for $L$, and this is therefore equivalent to choosing a brane $(L,f_L)$. For this reason, from now on, we often indicate by $L_S$ a Lagrangian brane. \\
\noindent The previous proposition justifies the next fundamental definition. 
\begin{defn}
Let $L = L_S \in\mathscr{L}_0(T^*N)$ and $\alpha\in H^*(N) \setminus \{0\}$. We define
$$c(\alpha, L) := c(\alpha, S)$$ 
for any choice of the GFQI $S$ for $L$ up to fibered diffeomorphisms and stabilization.
\end{defn}
\begin{rem} \label{costante a}
\textnormal{
Recall that we denote $T_a(L,f_L) := (L,f_L + a)$, $a \in \mathbb{R}$. If $S$ is a GFQI associated to the brane $(L, f_L)$, the generating function giving $T_a(L, f_L)$ is simply $S+a$. Notice that $S+a$ is not quadratic at infinity but asymptotically quadratic at infinity, see Definition \ref{AQI}. Consequently --applying Lemma \ref{serve dopo 1}-- we can still define the spectral invariant for $S +a$ and we obtain that
\begin{equation*}
c(\alpha, S+a)=c(\alpha, S)+a\, .
\end{equation*}
or equivalently:
\begin{equation*}
c(\alpha, T_a(L, f_L)) = c(\alpha, L) + a
\end{equation*}
for every $a \in \R$.}
\end{rem}

\indent From now on, we assume either that the closed $n$-dimensional manifold $N$ is oriented or that cohomologies have coefficients in $\Z_2:=\Z/2\Z$. In this setting, we denote by $1 \in H^0(N)$ and $\mu \in H^n(N)$ the point class and the orientation class of $N$ respectively. We notice that the existence of $\mu \in H^n(N)$ is guaranteed by the assumptions above. 

\begin{defn} Let $L = L_{S} \in\mathscr{L}_0(T^*N)$. We define
\begin{equation*}
c_+(L) :=c (\mu, S) \quad \text{and} \quad c_-(L):=c(1, S)\, .
\end{equation*}
\end{defn}

\noindent In order to introduce the corresponding definition for a pair of Lagrangian branes, given two GFQI $S_1, S_2$, we set
$$S_1 \oplus S_2(q; \xi_1, \xi_2) := S_1(q; \xi_1) + S_2(q; \xi_2)$$
and 
$$S_1\ominus S_2(q; \xi_1, \xi_2) := S_1(q; \xi_1) - S_2(q; \xi_2)\, .$$
Notice that $S_1 \oplus S_2$ and $S_1 \ominus S_2$ are, in general, asymptotically quadratic generat-
ing functions. However, by Lemma \ref{serve dopo}, the corresponding Lagrangians admits a GFQI.
\begin{rem} \label{S1 - S2} \textnormal{Global critical points of $S_1\ominus S_2$ are in $1:1$ correspondence with intersection points between $L_{S_1}$ and $L_{S_2}$. That is
$$\text{Crit}(S_1\ominus S_2) \leftrightarrow L_{S_1} \cap L_{S_2}\, .$$
In fact, by imposing
\begin{eqnarray*} 
\begin{cases}
\frac{\partial (S_1\ominus S_2)}{\partial \xi_1}(q;\xi_1,\xi_2) & = \frac{\partial S_1}{\partial \xi_1}(q;\xi_1) = 0  \\
\frac{\partial (S_1\ominus S_2)}{\partial \xi_2}(q;\xi_1,\xi_2) & = \frac{\partial S_2}{\partial \xi_2}(q;\xi_2) = 0  \\
\frac{\partial (S_1\ominus S_2)}{\partial q}(q;\xi_1,\xi_2) & = \frac{\partial S_1}{\partial q}(q;\xi_1) - \frac{\partial S_2}{\partial q}(q;\xi_2) = 0\, ,
\end{cases}
\end{eqnarray*}
we exactly obtain 
points in $L_{S_1} \cap L_{S_2}$.}
\end{rem}
\begin{defn} \label{serve in ultima}
Let $(L_1,L_2) = (L_{S_1},L_{S_2}) \in \mathscr{L}_0(T^*N) \times \mathscr{L}_0(T^*N)$ and $\alpha\in H^*(N) \setminus \{0\}$. We define
$$c(\alpha, L_1,L_2) := c(\alpha, S_2 \ominus S_1)\, .$$   
\end{defn}
\noindent We prove here below that $c(\alpha, L_1,L_2)$ is $\mathrm{DHam}(T^*N)$-invariant. The reader should bear in mind Remark \ref{nota supp com}.
\begin{prp} \label{invariance phi} Let $(L_1,L_2) \in \mathscr{L}_0(T^*N) \times \mathscr{L}_0(T^*N)$, $\alpha\in H^*(N) \setminus \{0\}$ and $\varphi \in \mathrm{DHam}(T^*N)$. Then
$$c(\alpha,L_1,L_2) = c(\alpha,\varphi(L_1),\varphi(L_2))\, .$$
\end{prp}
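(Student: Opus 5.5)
We argue by continuity along the Hamiltonian isotopy, using the spectrality of the invariants. By Remark \ref{nota supp com} we may take $\varphi=\varphi_H^1$ with $H$ compactly supported, and set $\varphi_t:=\varphi_H^t$ for $t\in[0,1]$. Put $L_i^t:=\varphi_t(L_i)$, with the branes (primitives) transported as in Section \ref{sezione 23}. By Theorem \ref{thm:lauSik} and Theorem \ref{thm:theSerFib}, we can choose GFQIs $S_1^t, S_2^t$ for $L_1^t, L_2^t$ that depend continuously (indeed smoothly) on $t$. After stabilizing, we may assume they live on a fixed $N\times\R^k$. Then $S^t:=S_2^t\ominus S_1^t$ is a continuous family of asymptotically quadratic generating functions, and Lemma \ref{serve dopo 1} lets us replace it by GFQIs. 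The function $t\mapsto c(\alpha,S^t)$ is well defined by Proposition \ref{invariance 2 op}. The goal is to show it is constant on $[0,1]$.

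The first step is continuity in $t$. For a family that is $C^0$-close on the region where the functions are not quadratic, with the same quadratic form at infinity, we have $S^{t}-\delta\le S^{t'}\le S^{t}+\delta$. Hence $(S^{t})^{s-\delta}\subset (S^{t'})^{s}\subset (S^{t})^{s+\delta}$, and the min-max definition \eqref{min-max} gives $|c(\alpha,S^t)-c(\alpha,S^{t'})|\le\delta$. One has to check that the Thom classes $\tilde\alpha$ correspond under these inclusions. This holds because the family is constant at infinity, or after a fibered isotopy making it so.

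The second step is that the spectrum has measure zero. By Remark \ref{S1 - S2}, critical points of $S^t$ correspond to points $z\in L_1^t\cap L_2^t$, and the critical value is the difference of the primitives, $f_{L_2^t}(z)-f_{L_1^t}(z)$. Pull back by $\varphi_t$, so that $z=\varphi_t(w)$ with $w\in L_1\cap L_2$. By the transport formula for primitives, the correction term $\int_0^t[\gamma^*\lambda+H(s,\gamma(s))]\,ds$ depends only on the starting point $w$ and on $t$, not on which Lagrangian $w$ lies in. So this term cancels in the difference, and the critical value is $f_{L_2}(w)-f_{L_1}(w)$, independent of $t$. Thus the set of critical values of $S^t$ is the fixed set $\Lambda:=\{f_{L_2}(w)-f_{L_1}(w):w\in L_1\cap L_2\}$. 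This is the set of critical values of the smooth function $f_{L_2}-f_{L_1}$ restricted to the intersection. Viewed through $S^0$, it is the set of critical values of a smooth function, so it has measure zero by Sard's theorem. By the spectrality proposition of Section \ref{PS critical}, $c(\alpha,S^t)\in\Lambda$ for all $t$.

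Finally, a continuous function on $[0,1]$ with values in a measure-zero (hence totally disconnected) subset of $\R$ is constant. This gives $c(\alpha,L_1,L_2)=c(\alpha,\varphi(L_1),\varphi(L_2))$. I expect the main obstacle to be the bookkeeping in the first step: making the GFQIs of $S_2^t\ominus S_1^t$ vary continuously with a fixed quadratic part at infinity, so that the sublevel comparison and the identification of $\tilde\alpha$ are legitimate. I would handle this via Théret's fibration theorem together with Lemma \ref{serve dopo 1}, applied uniformly in $t$ on the compact interval. The cancellation of the action term in the second step must also be checked carefully against the sign convention $S_2\ominus S_1$.
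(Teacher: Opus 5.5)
Your proof is correct and follows the paper's argument. Spectrality places $t\mapsto c(\alpha,\varphi_t(L_1),\varphi_t(L_2))$ in the critical-value set of $S_2^t\ominus S_1^t$, Sard makes that set null, and continuity of the GFQI family along the isotopy then forces the map to be constant. Your use of the transport formula to show that these critical values $f_{L_2}(w)-f_{L_1}(w)$, $w\in L_1\cap L_2$, do not depend on $t$ makes explicit a step the paper only asserts, and it is exactly what is needed for the null set to be the same for all $t$.
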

\begin{proof}
Let $(\varphi_t)_{t\in[0,1]}$ be any Hamiltonian isotopy such that $\varphi_1 = \varphi$. We indicate by $S_1^t$ and $S_2^t$ the GFQIs for $\varphi_t(L_1)$ and $\varphi_t(L_2)$ respectively. Then $c(\alpha,\varphi_t(L_1),\varphi_t(L_2))$ is a critical value of $S^t_2 \ominus S^t_1$ and corresponds --see Remark \ref{S1 - S2} above-- to a point in $\varphi_t(L_1) \cap \varphi_t(L_2) = \varphi_t(L_1 \cap L_2)$. Consequently, by Sard's theorem, the map
$$t \mapsto c(\alpha, \varphi_t(L_1),\varphi(L_2))$$
has values in a set of measure zero. Since the Lagrangian isotopy $(\varphi_t)_{t \in [0,1]}$ gives continuous families of GFQIs $(S_1^t)_{t \in [0,1]}$ and $(S_2^t)_{t \in [0,1]}$, the previous map is constant, which is the desired result. 
\end{proof}
\begin{rem}\label{rem:milOh}
\textnormal{Following L. Traynor \cite{Tray}, it is possible to define the Generating Function homology of a Lagrangian. We give its definition for the reader's convenience, even though it will not  appear in these notes. Given an exact Lagrangian $L$ in $T^*N$ and two real numbers $a<b$, the Generating Function homology of $L$ in the action window $(a, b]$ in degree $k$ is defined as
\begin{equation*}
    G^{(a, b]}_k(L):=H_{k+q}(S^b, S^a)\, ,
\end{equation*} for any choice of a GFQI $S$ of $L$ with signature at infinity $q\in\N$. The proof that it is well-defined (i.e. independent from the choice of $S$) is similar to the proof of Proposition \ref{invariance 2 op}. In \cite{Viterbo1995b} (see also \cite{milOh97}) it is proved that there exists a canonical level-preserving isomorphism between this Generating Function homology and the Floer homology in the case of cotangent bundle $T^*N$.}
\end{rem}

\noindent We underline that the above result is a particular case of the so-called conformal invariance of $c(\alpha, L_1,L_2)$. We refer to \cite[Corollary 4.3]{vit92} for the proof in the case of Hamiltonian diffeomorphisms of $\R^{2n}$, which immediately extends to our setting. 
\begin{prp} \label{conformal invariance phi} Let $(L_1,L_2) \in \mathscr{L}_0(T^*N) \times \mathscr{L}_0(T^*N)$, $\alpha\in H^*(N) \setminus \{0\}$ and $\psi \in \mathrm{Diff}(T^*N)$ CES with conformal ratio $a \in \mathbb{R}_{> 0}$. Then
$$c(\alpha,\psi(L_1),\psi(L_2)) = a\,c(\psi^* \alpha,L_1,L_2)\, .$$ 
\end{prp}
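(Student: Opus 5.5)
The plan is to reduce to two model cases: a Hamiltonian diffeomorphism (the case $a=1$, already handled by Proposition \ref{invariance phi}) and a pure conformal dilation.

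\emph{Decomposition.} Let $\psi$ be CES with ratio $a$. Take the fiberwise dilation $\rho_a(q,p)=(q,ap)$ of $T^*N$; it satisfies $\rho_a^*\lambda = a\lambda$. Set $\chi:=\psi\circ\rho_a^{-1}$. Then $\chi^*\lambda-\lambda$ is exact, so $\chi$ is exact symplectic.

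To apply Proposition \ref{invariance phi} we also need $\chi$ to be isotopic to the identity through exact symplectic maps, hence Hamiltonian. This holds on the relevant compact region when $\psi$ is isotopic to the identity, which is the natural setting here (CES maps coming from flows). In general, $\psi^*$ acts on $H^*(T^*N)\cong H^*(N)$, and that is why the statement involves $\psi^*\alpha$. I expect this step to be the main obstacle, and I would handle it as follows. Write $\psi=\chi\circ\rho_a$. Prove the formula for exact symplectic $\chi$ in general by following \cite[Corollary 4.3]{vit92}: generate the graph of $\chi$ and compose generating functions. The homotopy class of $\chi$ then enters only through the pullback on $H^*(N)$ in the Thom isomorphism (\ref{iso Thom}). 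This is where $\psi^*\alpha$ arises, and for maps isotopic to the identity $\psi^*\alpha=\alpha$.

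\emph{Dilation step.} If $S_i(q;\xi)$ is a GFQI for $L_i$, then $aS_i(q;\xi)$ is a GFQI for $\rho_a(L_i)$. Indeed, $\partial_\xi(aS_i)=0$ cuts out the same $\Sigma_{S_i}$, and $\partial_q(aS_i)=a\,\partial_q S_i$. The primitive is scaled by $a$, consistent with $\rho_a^*\lambda=a\lambda$, so the brane is the correct one. Hence $S_2\ominus S_1$ becomes $a(S_2\ominus S_1)$. Its sublevel sets satisfy $(aS)^t=S^{t/a}$, and the Thom class identification is unchanged because $aQ$ has the same index as $Q$. Therefore
$$c(\alpha,\rho_a(L_1),\rho_a(L_2))=a\,c(\alpha,L_1,L_2).$$

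\emph{Exact symplectic step.} For $\chi$, use the isotopy argument of Proposition \ref{invariance phi}. Choose a path $\chi_t$ of exact symplectic maps and continuous families of GFQIs for $\chi_t(L_i)$, which exist by Theorem \ref{thm:theSerFib}. The critical values of the difference correspond to $\chi_t(L_1)\cap\chi_t(L_2)$, and the action values at these points are preserved because $\chi_t^*\lambda-\lambda=dg_t$ shifts both primitives by the same $g_t$. The difference of the primitives at an intersection point is therefore constant in $t$, and the spectral value stays in a fixed measure-zero set. By continuity it is constant.

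Combining the two steps gives $c(\alpha,\psi(L_1),\psi(L_2))=c(\alpha,\rho_a(L_1),\rho_a(L_2))=a\,c(\alpha,L_1,L_2)$, with $\alpha$ replaced by $\psi^*\alpha$ in the non-isotopic case as explained above.
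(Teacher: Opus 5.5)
\textbf{Verdict.} Your argument is correct for CES maps joined to the identity through CES maps. For a general CES $\psi$ — the case the statement covers by writing $\psi^*\alpha$ — there is a genuine gap.

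\textbf{What works.} Your dilation step is correct: $aS_i$ generates the brane $\rho_a(L_i)$, $(aS)^t=S^{t/a}$, and the Thom identification is unchanged. Your isotopy step is also correct. Noting that the critical values $f_{L_2}(w)-f_{L_1}(w)$, $w\in L_1\cap L_2$, do not move, because both primitives are shifted by the same $g_t$, is exactly what makes the measure-zero argument work. This is essentially the argument behind the paper's citation of \cite[Corollary 4.3]{vit92}.

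\textbf{Scope of your argument.} As written it proves the statement only when $\psi$ is joined to the identity by a path $\psi_t$ of CES maps with ratios $a_t$, $a_0=1$. In that case $\chi_t=\psi_t\circ\rho_{a_t}^{-1}$ is an isotopy through exact symplectic maps, hence a Hamiltonian one. After cutting off near the compact set swept out by the Lagrangians, you are in the setting of Proposition \ref{invariance phi}. A merely smooth isotopy from $\psi$ to the identity gives nothing of this kind for $\chi$, so your sentence ``this holds when $\psi$ is isotopic to the identity'' must be read in the CES sense. In that case $\psi^*\alpha=\alpha$, so the content carried by $\psi^*\alpha$ is not yet addressed.

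\textbf{The gap.} The paper does use the general case, e.g.\ for the contraction property of $\gamma$ in Section \ref{sezione 5}, where no isotopy is assumed. Your plan to ``generate the graph of $\chi$ and compose generating functions'' is only available when $\chi$ is Hamiltonian: the GFQI existence results of Section \ref{PRE} concern Hamiltonian images. Moreover, ``the homotopy class enters only through the Thom isomorphism'' is precisely what has to be proved.

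\textbf{How to close it with your own tools.}
\begin{itemize}
\item The left-hand side is defined only if $\psi(L_1)\in\mathfrak{L}_0(T^*N)$. Conjugating a Hamiltonian diffeomorphism by a CES map gives a Hamiltonian diffeomorphism, so $\chi(\mathcal{O}_N)=\psi(\mathcal{O}_N)=\phi(\mathcal{O}_N)$ for some Hamiltonian $\phi$.
\item Then $\chi'=\phi^{-1}\circ\chi$ preserves $\mathcal{O}_N$. The family $t\mapsto\rho_t^{-1}\circ\chi'\circ\rho_t$, $t\in(0,1]$, is an exact symplectic isotopy. It extends smoothly to $t=0$ by the cotangent lift $T^*g$ of $g=\pi\circ\chi'|_{\mathcal{O}_N}$; this follows from Hadamard's lemma applied to the $p$-component, which vanishes on $\mathcal{O}_N$.
\item Proposition \ref{invariance phi} and your isotopy argument then let you replace $\chi$ by $T^*g$. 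If $S(q;\xi)$ generates $L$, then $S(g^{-1}(q);\xi)$ generates $T^*g(L)$.
\item Naturality of the Thom isomorphism under $g\times\mathrm{id}$ gives $c(\alpha,T^*g(L_1),T^*g(L_2))=c(g^*\alpha,L_1,L_2)$. Here $g^*$ corresponds to $\chi^*=\psi^*$ under $H^*(T^*N)\cong H^*(N)$.
\end{itemize}
Combined with your dilation step, this gives the full statement.
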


\noindent Notice that, in the case where $\psi$ in the proposition above is isotopic to identity --as is the case for the flow of a conformally symplectic vector field-- then  $\psi^*\alpha=\alpha$. 

\begin{defn} Let $(L_1,L_2) = (L_{S_1}, L_{S_2}) \in \mathscr{L}_0(T^*N) \times  \mathscr{L}_0(T^*N)$. We define
\begin{equation*}
c_+(L_1, L_2) :=c (\mu, S_2 \ominus S_1)\, ,
\end{equation*}
\begin{equation*}
c_-(L_1, L_2):=c(1, S_2\ominus S_1)\, ,
\end{equation*}
\begin{equation*}
c(L_1, L_2) := \max \set{ c_+(L_1,L_2),0 } - \min \set{c_-(L_1,L_2),0 } 
\end{equation*}
and
\begin{equation*}
\gamma(L_1, L_2) :=c_+(L_1,L_2) - c_-(L_1, L_2)\, .
\end{equation*}
\end{defn}
\begin{rem}
\textnormal{Let $((L_1, f_{L_1}),(L_2,f_{L_2})) \in \mathscr{L}_0(T^*N) \times \mathscr{L}_0(T^*N)$. With the same argument of Remark \ref{costante a}, it is easy to see that
$$c_\pm((L_1,f_{L_1}),T_a(L_2,f_{L_2})) = c_\pm((L_1,f_{L_1}),(L_2,f_{L_2})) + a\,$$
and
$$c_\pm(T_a(L_1,f_{L_1}),(L_2,f_{L_2})) = c_\pm((L_1,f_{L_1}),(L_2,f_{L_2})) - a\,.$$}
\end{rem}
\indent The rest of this section is devoted to proving that $c$ and $\gamma$ are distances, respectively on $\mathscr{L}_0(T^*N)$ and $\mathfrak{L}_0(T^*N)$. For the proof of this fundamental fact, we shall require two lemmata . 
\begin{lem} \label{mag zero} Let $(L_1,L_2) \in \mathscr{L}_0(T^*N) \times  \mathscr{L}_0(T^*N)$. Then the next equality holds:
\begin{equation} \label{inf a}
\gamma(L_1,L_2) = \inf \set{ c(T_a(L_1,f_{L_1}),(L_2,f_{L_2})): a \in \mathbb{R}}\, .
\end{equation}
\end{lem}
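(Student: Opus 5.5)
We reduce the statement to an elementary optimization in one real variable. Set $c_\pm := c_\pm((L_1,f_{L_1}),(L_2,f_{L_2}))$. By the remark following the definition of $c$ and $\gamma$ (translation of the first brane by $a$), we have
$$c_\pm(T_a(L_1,f_{L_1}),(L_2,f_{L_2})) = c_\pm - a .$$
Hence
$$c(T_a(L_1,f_{L_1}),(L_2,f_{L_2})) = \max\{c_+-a,0\} - \min\{c_--a,0\} =: g(a),$$
while $\gamma$ is unchanged under $T_a$, being the difference $c_+-c_-$. The lemma therefore amounts to showing that $\inf_{a\in\R} g(a) = c_+-c_-$.

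The one input needed beyond this is the ordering $c_-\le c_+$. This follows from the Lusternik--Schnirelmann Theorem \ref{thm:lusSch}$(i)$ applied to $S_2\ominus S_1$. Take $\alpha=\widetilde{1}$, the Thom class image of $1\in H^0(N)$, and $\beta$ the pullback of $\mu$ to $N\times\R^k$. Under the Thom isomorphism (\ref{iso Thom}), cup product with pulled-back classes from $N$ corresponds to cup product in $H^*(N)$, so $\widetilde{1}\cup\beta=\widetilde{\mu}$. Theorem \ref{thm:lusSch}$(i)$ then gives $c(\mu,S_2\ominus S_1)\ge c(1,S_2\ominus S_1)$, that is, $c_+\ge c_-$. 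The main point to check is this compatibility of the Thom isomorphism with the $H^*(N)$-module structure. It is standard, because the Thom isomorphism is given by cup product with the Thom class of the bundle (after a fibered deformation to $Q^\infty, Q^{-\infty}$).

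It remains to minimize $g$ by cases on the position of $a$ relative to $[c_-,c_+]$.
\begin{itemize}
\item If $c_-\le a\le c_+$, then $g(a)=(c_+-a)-(c_--a)=c_+-c_-$.
\item If $a>c_+$, then $g(a)=0-(c_--a)=a-c_->c_+-c_-$.
\item If $a<c_-$, then $g(a)=c_+-a>c_+-c_-$.
\end{itemize}
Thus $g\ge c_+-c_-=\gamma(L_1,L_2)$ everywhere, with equality on $[c_-,c_+]$, which is nonempty by the ordering established above. So the infimum in (\ref{inf a}) is attained and equals $\gamma(L_1,L_2)$.
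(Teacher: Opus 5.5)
Your proof is correct and follows the paper's route. You translate the first brane by $a$, use $c_\pm(T_a(L_1,f_{L_1}),(L_2,f_{L_2}))=c_\pm-a$, and minimize $\max\{c_+-a,0\}-\min\{c_--a,0\}$ over $a$; the paper simply evaluates this expression at $a=c_+$. You also make explicit the ordering $c_-\le c_+$ via Lusternik--Schnirelmann, which the paper's short argument uses tacitly and only establishes later, in the proof of Theorem~\ref{distanze}, by the same cup-product argument $\mu\cup 1=\mu$.
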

\begin{proof} 
We first notice that we may change $f_{L_1}$ into $f_{L_1} + a$ for any constant $a \in \mathbb{R}$. In this way, $c_{\pm}(L_1,L_2)$ is changed into $c_{\pm}(L_1,L_2) - a$ and $c(L_1,L_2)$ is changed into 
$$\max\{ c_+(L_1,L_2) - a,0\} -\min \set{ c_-(L_1,L_2) - a,0 }\, .$$
The last expression is minimal when $a = c_+(L_1,L_2)$ and then takes the value
$$c_+(L_1,L_2) - c_-(L_1,L_2) = \gamma(L_1,L_2)\, .$$
\end{proof}
\noindent For the proof of the next lemma, we refer to \cite[Proposition 3.3]{vit92}.
\begin{lem} \label{triangle}
Let $S_1, S_2$ be two GFQIs. For any $\alpha, \beta$ in $H^*(N)$, it holds:
\begin{equation*}
c(\alpha\cup\beta, S_1 \oplus S_2) \geq c(\alpha, S_1) +c (\beta, S_2)\, .
    \end{equation*}
\end{lem}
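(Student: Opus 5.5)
We prove the inequality with a Künneth argument on sublevel sets, in the spirit of the Lusternik--Schnirelmann comparison of Theorem \ref{thm:lusSch}. Write $E_j = N\times\R^{k_j}$ and consider first the exterior sum $S_1 \boxplus S_2$ on $E_1\times E_2 = N\times N\times \R^{k_1+k_2}$, given by $(q_1,\xi_1,q_2,\xi_2)\mapsto S_1(q_1,\xi_1)+S_2(q_2,\xi_2)$. Then $S_1\oplus S_2$ is the restriction of $S_1\boxplus S_2$ to the preimage of the diagonal $\Delta_N\subset N\times N$. Under the Thom isomorphisms (\ref{iso Thom}), the class $\alpha\cup\beta\in H^*(N)$ corresponds to the pullback by the diagonal of $\alpha\times\beta\in H^*(N\times N)$.

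\emph{Step 1: the exterior sum.} We claim $c(\alpha\times\beta, S_1\boxplus S_2)\ge c(\alpha,S_1)+c(\beta,S_2)$. Fix $a_j<c(\cdot ,S_j)$, say $a_1<c(\alpha,S_1)$ and $a_2<c(\beta,S_2)$. Then $\tilde\alpha$ vanishes in $H^*(S_1^{a_1},S_1^{-\infty})$, and $\tilde\beta$ vanishes in $H^*(S_2^{a_2},S_2^{-\infty})$. We will show that $\tilde\alpha\times\tilde\beta$ vanishes on $(S_1\boxplus S_2)^{a_1+a_2}$ relative to $(S_1\boxplus S_2)^{-\infty}$. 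The relative cross product $\tilde\alpha\times\tilde\beta$ lives in
\[
H^*\bigl(E_1\times E_2,\; S_1^{-\infty}\times E_2\cup E_1\times S_2^{-\infty}\bigr).
\]
The sublevel set $\{S_1+S_2\le a_1+a_2\}$ is contained in $(S_1^{a_1}\times E_2)\cup(E_1\times S_2^{a_2})$. On $S_1^{a_1}\times E_2$ the class restricts to $(\tilde\alpha|_{S_1^{a_1}})\times\tilde\beta=0$, and symmetrically on $E_1\times S_2^{a_2}$. Since both factors vanish on the respective pieces, the product of two classes, each vanishing on one member of a cover, vanishes on the union (the usual relative cup product argument $H^*(X,A)\otimes H^*(X,B)\to H^*(X,A\cup B)$). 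The class therefore vanishes on the union, hence on the sublevel. Some care is needed here: the pairs are only ``relative to $-\infty$'' up to the quadratic-at-infinity normalization, so we replace $S^{-\infty}$ by $S^{-C}$ with $C$ large and use that $S_1\boxplus S_2$ is asymptotically quadratic (Lemma \ref{serve dopo 1}). Taking the supremum over $a_1,a_2$ gives the claim.

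\emph{Step 2: restriction to the diagonal.} The inclusion $j:N\times\R^{k_1+k_2}\hookrightarrow E_1\times E_2$ over $\Delta_N$ maps sublevel sets of $S_1\oplus S_2$ into sublevel sets of $S_1\boxplus S_2$. By naturality, if $\tilde\alpha\times\tilde\beta$ vanishes on $(S_1\boxplus S_2)^t$, then $j^*(\tilde\alpha\times\tilde\beta)$ vanishes on $(S_1\oplus S_2)^t$. Hence
\[
c\bigl(j^*(\tilde\alpha\times\tilde\beta),S_1\oplus S_2\bigr)\ge c(\tilde\alpha\times\tilde\beta,S_1\boxplus S_2).
\]

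\emph{Step 3: the main obstacle.} The delicate point is identifying $j^*(\tilde\alpha\times\tilde\beta)$ with the Thom image of $\alpha\cup\beta$ under (\ref{iso Thom}) for $S_1\oplus S_2$. We check this at infinity, where everything is quadratic. There the Thom class of $Q_1\oplus Q_2$ over $N$ is the diagonal pullback of the exterior product of the Thom classes of $Q_1$ and $Q_2$. The Thom isomorphism is multiplication by the Thom class, and it is compatible with cross products and pullbacks. It follows that $j^*(\tilde\alpha\times\tilde\beta)=\widetilde{\alpha\cup\beta}$, up to a sign, which is irrelevant for whether a class vanishes. Combining Steps 1--3 yields the lemma.
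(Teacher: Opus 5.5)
Your argument is correct. The sublevel inclusion $\{S_1+S_2\le a_1+a_2\}\subset\{S_1\le a_1\}\cup\{S_2\le a_2\}$, the relative cup product, and the multiplicativity of Thom classes under Whitney sum are the standard argument behind \cite[Proposition 3.3]{vit92}; the paper gives no proof of its own and only cites that reference. The point you leave sketchy, that $S_1\oplus S_2$ is only asymptotically quadratic so the Thom identification of Step 3 must be matched with the sublevel sets actually used, is glossed over to the same degree in the paper, and it is routine because $\{S_j\le -C\}=\{Q_j\le -C\}$ for $C$ large.
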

\begin{thm} \label{distanze}
The function $c$ is a distance on $\mathscr{L}_0(T^*N)$ and the function $\gamma$ is a distance on $\mathfrak{L}_0(T^*N)$. Moreover, $\mathrm{DHam}_c(T^*N)$ acts by isometries on $(\mathscr{L}_0(T^*N),c)$ and $(\mathfrak{L}_0(T^*N),\gamma)$.
\end{thm}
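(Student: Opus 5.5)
We verify the metric axioms for $c$ on $\mathscr{L}_0(T^*N)$ and deduce those for $\gamma$ from Lemma \ref{mag zero}; invariance comes from Proposition \ref{invariance phi}.

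\textbf{Basic inequalities and symmetry.} First we show $c_-(L_1,L_2)\le c_+(L_1,L_2)$. Since $\mu = 1\cup\mu$, Theorem \ref{thm:lusSch}(i) gives $c(\mu,S)\ge c(1,S)$ for $S=S_2\ominus S_1$. Hence $c(L_1,L_2)\ge 0$ and $\gamma(L_1,L_2)\ge 0$.

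For symmetry, we use $S_1\ominus S_2=-(S_2\ominus S_1)$. Alexander/Poincaré duality, as in the proof of Proposition \ref{invariance 2 op}, exchanges sublevel sets of $S$ with superlevel sets of $-S$, and the point class with the orientation class. This should give
\[
c(\mu,-S)=-c(1,S),
\]
and therefore $c_\pm(L_2,L_1)=-c_\mp(L_1,L_2)$. Both $c$ and $\gamma$ are then symmetric.

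\textbf{Triangle inequality.} This is the step needing care. For three branes $L_1,L_2,L_3$ with GFQIs $S_1,S_2,S_3$, consider $(S_3\ominus S_2)\oplus(S_2\ominus S_1)$. Lemma \ref{triangle} with $\alpha=\beta=1$ gives
\[
c\bigl(1,(S_3\ominus S_2)\oplus(S_2\ominus S_1)\bigr)\ge c_-(L_2,L_3)+c_-(L_1,L_2).
\]
We must compare this with $c_-(L_1,L_3)$. The sum lives over $N\times N$, while $S_3\ominus S_1$ lives over $N$. Restricting to the diagonal and adding the variable $\xi_2$ reduces to showing that the restriction $S_3(q;\xi_3)-S_2(q;\xi_2)+S_2(q;\xi_2')-S_1(q;\xi_1)$ has the same invariant as $S_3\ominus S_1$. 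The idea is that $S_2(q;\xi_2')-S_2(q;\xi_2)$ generates the diagonal-type Lagrangian, that is, the zero section of $T^*N$ with primitive $0$. Its spectral invariants then vanish. We would justify this by deforming $L_2$ to the zero section via Proposition \ref{invariance phi}, together with the stabilization invariance of Proposition \ref{invariance 2 op}.

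This yields $c_-(L_1,L_3)\ge c_-(L_1,L_2)+c_-(L_2,L_3)$. Applying duality (symmetry) turns it into $c_+(L_1,L_3)\le c_+(L_1,L_2)+c_+(L_2,L_3)$. The triangle inequality for $\gamma$ follows immediately. For $c$, we combine these with $\max\{x+y,0\}\le\max\{x,0\}+\max\{y,0\}$ and the analogous inequality for the minimum. This identification of the composite invariant is the main obstacle; I would first prove it when $L_2=\mathcal{O}_N$, then transport it.

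\textbf{Nondegeneracy.} Suppose $\gamma(L_1,L_2)=0$, so $c(1,S)=c(\mu,S)$. If $L_1\neq L_2$, we apply a Hamiltonian $\varphi$ with $\varphi(L_1)=\mathcal{O}_N$; then $\varphi(L_2)$ is the image of some $L\ne\mathcal{O}_N$. By Theorem \ref{thm:lusSch}(ii), the critical set at level $c$ carries nonzero cohomology in degree $n$, so it cannot be a proper closed subset of $N$ of the relevant type. More precisely, we argue that the projection of the corresponding intersection points must be all of $N$, which forces $L\supset\mathcal{O}_N$ and hence $L=\mathcal{O}_N$. For $c$, the condition $c(L_1,L_2)=0$ gives $c_+\le 0\le c_-$, hence $c_+=c_-=0$. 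Then $L_1=L_2$, and the primitives agree because $c_\pm$ shifts by $a$ under $T_a$.

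\textbf{Isometries.} Proposition \ref{invariance phi}, applied with $\alpha=1,\mu$, shows that $c_\pm$, and hence $c$ and $\gamma$, are invariant under $\mathrm{DHam}_c(T^*N)$.
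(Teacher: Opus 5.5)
Positivity, nondegeneracy, symmetry and the isometry statement follow the paper's proof. Your nondegeneracy step is actually a bit more careful than the paper's: normalizing one Lagrangian to $\mathcal{O}_N$ before invoking Theorem~\ref{thm:lusSch}(ii) is exactly what turns ``the intersection projects onto $N$'' into $L_1=L_2$.

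The gap is in the triangle inequality. Your key claim is that $D(q;\xi_2,\xi_2')=S_2(q;\xi_2')-S_2(q;\xi_2)$ generates the zero section with primitive $0$. This is false. The fiber-critical points of $D$ are all pairs $(q,p),(q,p')\in L_2$ lying over the same $q$, so $D$ generates $\{(q,p'-p)\}$. That set is strictly larger than $\mathcal{O}_N$ as soon as $L_2$ is not a graph, and off the diagonal $D$ takes nonzero values. Hence $T=(S_3\ominus S_2)\oplus(S_2\ominus S_1)$ is not a stabilization of $S_3\ominus S_1$.

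Nor can Proposition~\ref{invariance phi} deform $L_2$ inside $T$. That proposition concerns a pair moved by one and the same Hamiltonian diffeomorphism. The critical points of $T$ are configurations with $p_3-p_1+p_2'-p_2=0$ in a single cotangent fiber, and a non-fiber-preserving isotopy does not preserve them. So there is no Sard-type constancy for $c(1,T)$, and the identity $c(1,T)=c(1,S_3\ominus S_1)$ cannot be ``transported'' from the case $L_2=\mathcal{O}_N$. Note also that $\oplus$ in the paper is already the fiberwise sum over the same base $N$, so there is no $N\times N$ and no restriction to a diagonal.

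The fix, which is the paper's argument, is to transport the inequality rather than the identity. All pairwise $c_\pm(L_i,L_j)$ are $\mathrm{DHam}$-invariant, so you may assume from the outset that $L_2$ is the zero section with $S_2\equiv 0$; the constant in its primitive cancels from the $c_\pm$ inequalities. Then $c_-(L_1,L_2)=c(1,-S_1)$ and $c_-(L_2,L_3)=c(1,S_3)$. Lemma~\ref{triangle} applied directly to $S_3\ominus S_1=S_3\oplus(-S_1)$ gives $c_-(L_1,L_3)\ge c_-(L_1,L_2)+c_-(L_2,L_3)$. Duality yields the $c_+$ version, and your max/min bookkeeping finishes both triangle inequalities.

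If you prefer to keep $L_2$ general, note that you only need $c(1,T)\le c(1,S_3\ominus S_1)$, and this does hold. Apply Lemma~\ref{triangle} with $(\alpha,\beta)=(\mu,1)$ to $-T=(S_1\ominus S_3)\oplus(-D)$ and use duality twice. This gives $c(1,T)=-c(\mu,-T)\le c(1,S_3\ominus S_1)+c(\mu,D)$, and $c(\mu,D)=c_+(L_2,L_2)=0$ by Proposition~\ref{invariance phi}. That is a genuine extra argument, not the ``$D$ is a stabilization'' claim you proposed.
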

\begin{proof} We start by noting that it is sufficient to give the proof for $c$. By Lemma \ref{mag zero}, proving that $c$ is a distance on $\mathscr{L}_0(T^*N)$, immediately implies the analogous statement for $\gamma$. \\
Let $(L_1,L_2) \in \mathscr{L}_0(T^*N) \times  \mathscr{L}_0(T^*N)$. \\
-- Positivity: $c((L_1, f_{L_1}) , (L_2, f_{L_2})) = 0 \Leftrightarrow (L_1, f_{L_1}) = (L_2, f_{L_2})$. \\
Since $\mu \cup 1 = \mu$, applying Theorem \ref{thm:lusSch} we get:
$$c_+(L_1,L_2) = c(\mu,S_2 \ominus S_1) = c(\mu \cup 1,S_2 \ominus S_1) \ge c(1,S_2 \ominus S_1) = c_-(L_1,L_2)\, .$$
It is then sufficient to notice that
$$0 \le c_+(L_1,L_2) - c_-(L_1,L_2) \le \max \{ c_+(L_1,L_2),0\} - \min \{c_-(L_1,L_2),0\}\, ,$$
that is
$$0 \le \gamma(L_1,L_2) \le c(L_1,L_2)\, ,$$
hence $c(L_1,L_2) \ge 0$. \\
Moreover, we observe that $\gamma(L_1,L_2) = 0$ if and only if $L_1 = L_2$. By definition:
$$\gamma(L_1,L_2) = 0 \Leftrightarrow c(1, S_2 \ominus S_1) = c(\mu, S_2 \ominus S_1) = c(\mu \cup 1, S_2 \ominus S_1)\, .$$
By Lusternik-Schnirelmann theorem in the equality case (see $(ii)$ of Theorem \ref{thm:lusSch}), we have that $\mu \ne 0$ in $H^*(\text{Crit}(S_2 \ominus S_1))$. Equivalently --by Remark \ref{S1 - S2}-- $\mu \ne 0$ in $H^*(L_1 \cap L_2)$. Since $\mu$ is a $n$-dimensional cohomology class, this implies that $L_1 = L_1 \cap L_2 = L_2$ and therefore $L_1 = L_2$. The positivity of $\gamma$ implies also the positivity of $c$. Indeed, since $0 \le \gamma(L_1,L_2) \le c((L_1, f_{L_1}),(L_2, f_{L_2}))$, 
$$c((L_1,f_{L_1}),(L_2, f_{L_2})) = 0 \Rightarrow \gamma(L_1,L_2) = 0 \Rightarrow L_1 = L_2\, ,$$
so that 
$$(L_2,f_{L_2}) = T_a (L_1,f_{L_1}) = (L_1,f_{L_1}+a)\, ,$$
for a constant $a \in \mathbb{R}$. This implies: 
$$0 = c((L_1,f_{L_1}),(L_1,f_{L_1}+a)) = a\, .$$
Hence $a = 0$, which means $(L_1, f_{L_1}) = (L_2, f_{L_2})$. \\
-- Symmetry: $c(L_1,L_2) = c(L_2,L_1)$. \\
In order to prove the symmetry, we need to recall that, as a consequence of Alexander duality (see \cite[Corollary 2.8.]{vit92}):
$$c_+(L_1,L_2):= c(\mu, S_2 \ominus S_1) = -c(1,S_1 \ominus S_2) =: -c_-(L_2,L_1).$$
From the above equality, symmetry for $c$ immediately follows, once reminded the elementary identity:
\begin{equation} \label{min -max}
    -\min\Set{a, 0}=\max\Set{-a, 0}\, .
\end{equation}
-- Triangle inequality: $c(L_1,L_2) \le c(L_1,L_2) + c(L_2,L_3)$. \\
This follows from Lemma \ref{triangle} above and \cite[Corollary 2.8.]{vit92}, as they imply the inequality:
$$c(1,S_1 \ominus S_3) \ge c(1,S_1) + c(1,-S_3) = c(1,S_1) - c(\mu,S_3)\, ,$$
that is
\begin{equation} \label{prima}
c(\mu,S_3 \ominus S_1) \le c(\mu,S_3) - c(1,S_1)\, . 
\end{equation}
By the same arguments we obtain the inequality
$$c(1,S_3 \ominus S_1) \ge c(1,S_3) + c(1,-S_1) = c(1,S_3) - c(\mu,S_1)\, ,$$
that is
\begin{equation} \label{seconda}
-c(1,S_3 \ominus S_1) \le c(\mu,S_1) - c(1,S_3)\, . 
\end{equation}
\noindent Let now $(L_1, f_{L_1}), (L_2, f_{L_2}), (L_3, f_{L_3}) \in \mathscr{L}_0(T^*N)$ (we omit in the sequel the choice of primitive $f_{L_i}$).
First notice that, by (\ref{prima}) and (\ref{seconda}), we have that
\begin{eqnarray} \label{pre}
c(L_1,L_3) := \max \set{c(\mu,S_3\ominus S_1),0} -\min \set{c(1,S_3\ominus S_1),0} \nonumber \\
\le \max \set{c(\mu,S_3) -c(1,S_1), 0} -\min \set{c(1,S_3)-c(\mu,S_1),0}\, .
\end{eqnarray}
Using (\ref{min -max}), we immediately obtain 
\begin{equation}\label{eq:triangIneqC1}
    \max\Set{c(\mu, S_3)-c(1, S_1), 0}\leq \max\Set{c(\mu, S_3), 0}-\min\Set{c(1, S_1), 0}\, .
\end{equation} 
Moreover, exchanging $S_1$ and $S_3$ in the previous inequality and using (\ref{min -max}) in the left-hand side of the equation, we  have 
\begin{equation}\label{eq:triangIneqC2}
-\min\Set{ c(1, S_3) - c(\mu, S_1), 0}\leq \max\Set{c(\mu, S_1), 0}-\min\Set{c(1, S_3), 0}\, .
\end{equation}
Since $c$ is invariant by Hamiltonian action (see Proposition \ref{invariance phi}), we can suppose that $L_2 = \mathcal{O}_N$. The, triangular inequality for $c$ can then be restated as
\begin{eqnarray*}
c(L_1,L_3) := \max \set{c(\mu,S_3\ominus S_1),0} -\min \set{c(1,S_3\ominus S_1),0} \le c(L_1,\mathcal{O}_N) + c(\mathcal{O}_N,L_3) \\
:= -\min \set{ c(1,S_1),0} + \max \set{ c(\mu,S_1),0} + \max \set{c(\mu,S_3),0} - \min \set{c(1,S_3),0}\, .
\end{eqnarray*}
Its validity is then a straightforward consequence of inequalities (\ref{pre}), (\ref{eq:triangIneqC1}) and (\ref{eq:triangIneqC2}). \\
-- The fact that $\mathrm{DHam}(T^*N)$ acts by isometries on $(\mathscr{L}_0(T^*N),c)$ and $(\mathfrak{L}_0(T^*N),\gamma)$ is a straightforward consequence of Proposition \ref{invariance phi}.
\end{proof}
\begin{es} \textnormal{Let $f$ be a smooth function on a closed connected manifold $N$. If
$$L = \Gamma_f = \set{ (q,df(q)): q \in N}\, ,$$
then
$$c(\mu,\Gamma_f) = \max_{x \in M}f(x) \quad \text{and} \quad c(1,\Gamma_f) = \min_{x \in M}f(x)\, .$$
Consequently, if $g \in C^{\infty}(N,\mathbb{R})$, the next identities are verified:
$$\gamma(\Gamma_f,\Gamma_g) = \max_{x \in N} (g-f)(x) - \min_{x \in N} (g-f)(x) =: \mathrm{osc}(g-f) \le 2 \norm{g-f}_{C^0}$$
and
$$c(\Gamma_f,\Gamma_g) =  \max\set{ \mathrm{osc}(g-f), \norm{g-f}_{C^0} }\, .$$
}
\end{es}
\noindent We conclude this section by recalling the relation between the distance $\gamma$ and the Hofer norm $\norm{H}$. Let $L \in \mathfrak{L}_0(T^*N)$ and $\varphi \in \mathrm{DHam}_c(T^*N)$, given by the Hamiltonian $H: [0,1] \times T^*N \to \mathbb{R}$. Then (see \cite{vit92}):
\begin{equation}
\gamma(L,\varphi(L)) \le \norm{H} := \int^1_0 \left( \max_{z \in T^*N} H(t,z) - \min_{z \in T^*N} H(t,z) \right) dt\, .
\end{equation}
\section{Completions, \texorpdfstring{$\gamma$-support and $\gamma$-coisotropy}{gamma-support and gamma-coisotropy}} \label{sezione 4}
\begin{center}
\begin{minipage}{10 cm}
\textit{Content of Section \ref{sezione 4}}. We introduce the completions $\widehat{\mathfrak{L}_0}(T^*N)$ and $\widehat{\mathscr{L}_0}(T^*N)$ of the metric spaces defined in Section \ref{SPEC}. Their elements are abstract objects but they admit a geometric counterpart, first introduced by  V. Humilière in \cite{hum08} and more recently exploited and called ``$\gamma$-support'' by C. Viterbo. To understand the elements in the completions, we introduce the definitions of $c$-support, $\gamma$-support and $\gamma$-coisotropy. After presenting some instructive examples, we prove that the notion of $\gamma$-coisotropic subset generalizes that of coisotropic subset and that $\gamma$-supports are $\gamma$-coisotropic. Finally, we briefly discuss some recent questions about the study of $\gamma$-supports.
\end{minipage}
\end{center}

\subsection{\texorpdfstring{$c$-support}{c-support}, \texorpdfstring{$\gamma$-support and $\gamma$-coisotropy}{gamma-support and gamma-coisotropy}}

\indent The metric spaces $(\mathfrak{L}_0(T^*N),\gamma)$ and $(\mathscr{L}_0(T^*N),c)$ defined in Section \ref{SPEC} are not complete (and not even Polish, see \cite[Proposition A.1.]{vit22}. We denote their respective completions by $\widehat{\mathfrak{L}_0}(T^*N)$ and $\widehat{\mathscr{L}_0}(T^*N)$. Their study was initiated in \cite{hum08} (in their Hamiltonian setting in $\mathbb{R}^{2n}$) and pushed further in \cite{vit22}. As a consequence of Theorem \ref{distanze}, the group of compactly supported Hamiltonian diffeomorphisms acts by isometries on $\widehat{\mathfrak{L}_0}(T^*N)$ and $\widehat{\mathscr{L}_0}(T^*N)$. 
\noindent In what follows, with an abuse of notation, we indicate by $L$ elements of $\widehat{\mathfrak{L}_0}(T^*N)$ and by $\tilde L$ elements in $\widehat{\mathscr{L}_0}(T^*N)$, omitting  the choice of primitive for a given brane for the sake of readability.
\begin{rem}
    \textnormal{Before proceeding, let us remark that Lemma \ref{mag zero} swiftly implies that the projection $\mathscr{L}_0(T^*N)\rightarrow\mathfrak{L}_0(T^*N)$ is Lipschitz with respect to the distances $c$ and $\gamma$, and in particular extends to a continuous map between the two completions.}  
\end{rem}
\begin{defn} Let $z\in T^*N$. \\
$(i)$ Let $\tilde{L} \in\widehat{\mathscr{L}_0}(T^*N)$. We say that $z\in c \textnormal{-supp}(\tilde{L})$ if and only if for every $\varepsilon>0$ there exists $\varphi\in\mathrm{DHam}_c(T^*N)$, supported in $B(z, \varepsilon)$, such that
    \begin{equation}
        c(\tilde{L}, \varphi (\tilde{L}))>0\, .
    \end{equation}
$(ii)$ Similarly, let $L \in \widehat{\mathfrak{L}_0}(T^*N)$. We say that $z \in \gamma\textnormal{-\supp}(L)$ if and only if for every $\varepsilon>0$ there exists $\varphi\in\mathrm{DHam}_c(T^*N)$, supported in $B(z, \varepsilon)$, such that
    \begin{equation} \label{dis completato}
        \gamma(L, \varphi (L))>0\, .
    \end{equation}
\end{defn}
\noindent We remark that condition (\ref{dis completato}) is equivalent to $L \neq \varphi(L)$ in $\widehat{\mathfrak{L}_0}(T^*N)$
\noindent Before going further, we list some properties of supports. 
\begin{rem} \label{3remarks} \textnormal{$(i)$ If $\tilde{L} \in \mathscr{L}_0(T^*N)$ [resp. $L \in \mathfrak{L}_0(T^*N)$], then $c\textnormal{-supp}(\tilde{L}) = \tilde{L}$ [resp. $\gamma\textnormal{-supp}(L) = L$]. \\
$(ii)$ Let $\tilde{L} \in \widehat{\mathscr{L}_0}(T^*N)$ [resp. $L \in \widehat{\mathfrak{L}_0}(T^*N)$]. The definition automatically implies that $c\textnormal{-supp}(\tilde{L})$ [resp. $\gamma\textnormal{-supp}(L)$] is closed, even though it may be very complicated. \\
$(iii)$ Let $\tilde{L} \in \widehat{\mathscr{L}_0}(T^*N)$ [resp. $L \in \widehat{\mathfrak{L}_0}(T^*N)$]. If $\psi \in \mathrm{Diff}(T^*N)$ is CES, then $c\textnormal{-supp}(\psi(\tilde{L})) = \psi(c\textnormal{-supp}(\tilde{L}))$ [resp. $\gamma\textnormal{-supp}(\psi(L)) = \psi(\gamma\textnormal{-supp}(L))$]. We refer to \cite[Corollary 6.22.]{vit22} or leave it as an exercise for the interested reader}.
\end{rem}
\indent Let $L \in \mathfrak{L}_0(T^*N)$ be the Lagrangian submanifold associated to a brane $\tilde{L} \in \mathscr{L}_0(T^*N)$. Notice that we may have $c(\tilde{L},\varphi(\tilde{L})) > 0$ but $\gamma(L,\varphi(L)) = 0$. For example, for $a \ne 0$, let $H$ be a compactly supported Hamiltonian, $(\varphi_t)_{t \in [0,1]}$ be its Hamiltonian flow and $\varphi = \varphi_1$ such that
$$\varphi(\tilde{L}) = T_a\tilde{L}\, .$$
This is the case e.g. if $H\equiv a$ on $L$ (see the proof of Proposition \ref{no empty}).  Then we may check that
$$c_+(\tilde{L}, \varphi(\tilde{L})) = a = c_-(\tilde{L}, \varphi(\tilde{L}))$$
so that
$$c(\tilde{L},\varphi(\tilde{L})) = |a|  $$ while $$ \gamma(L, \varphi(L)) = 0\, .$$

\noindent With the next proposition, we show that --in order to study supports-- we can get rid of the $c$-support.   
\begin{prp}
Let $L \in \widehat{\mathfrak{L}_0}(T^*N)$ corresponding to $\tilde{L} \in \widehat{\mathscr{L}_0}(T^*N)$. Then
\begin{equation}
c\textnormal{-\supp}(\tilde{L})=\gamma\textnormal{-\supp}(L)\, .
\end{equation}
\end{prp}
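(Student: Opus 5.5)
We prove the two inclusions separately. The inclusion $\gamma\textnormal{-supp}(L)\subset c\textnormal{-supp}(\tilde L)$ is immediate. By Lemma \ref{mag zero}, $\gamma\le c$ on $\mathscr{L}_0(T^*N)$, and this inequality passes to the completions since the projection is Lipschitz. Hence if $\varphi$ is supported in $B(z,\varepsilon)$ and $\gamma(L,\varphi(L))>0$, then also $c(\tilde L,\varphi(\tilde L))>0$.

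For the reverse inclusion, take $z\notin\gamma\textnormal{-supp}(L)$. Then there is $\varepsilon>0$ such that every $\varphi\in\mathrm{DHam}_c(T^*N)$ supported in $B(z,\varepsilon)$ satisfies $\gamma(L,\varphi(L))=0$, i.e. $\varphi(L)=L$ in $\widehat{\mathfrak L_0}(T^*N)$. We must show that $c(\tilde L,\varphi(\tilde L))=0$ for all such $\varphi$, possibly after shrinking $\varepsilon$. Since $\varphi(L)=L$, Lemma \ref{mag zero} (extended by continuity to the completion) gives $\varphi(\tilde L)=T_{a(\varphi)}\tilde L$ for a constant $a(\varphi)\in\R$, and then $c(\tilde L,\varphi(\tilde L))=|a(\varphi)|$. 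It therefore suffices to show that $a(\varphi)=0$.

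First, the map $\varphi\mapsto a(\varphi)$ behaves like a homomorphism. Because Hamiltonian maps commute with $T_a$ and act by isometries, $\varphi\psi(\tilde L)=\varphi(T_{a(\psi)}\tilde L)=T_{a(\psi)+a(\varphi)}\tilde L$. Moreover, $a$ is continuous along Hamiltonian paths $\varphi_t$ supported in $B(z,\varepsilon)$, since $c(\varphi_t\tilde L,\varphi_s\tilde L)\le$ the Hofer norm of the connecting Hamiltonian.

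Second, we exploit the fact that such $\varphi$ are generated by Hamiltonians $H$ with support in a small ball, where the action shift can be computed. The quickest route is a conjugation trick. Choose $\psi\in\mathrm{DHam}_c(T^*N)$ with $\psi(B(z,\varepsilon))$ disjoint from $B(z,\varepsilon)$, supported in a slightly bigger ball $B(z,\delta)$ that still lies outside the $\gamma$-support. Then $\psi\varphi\psi^{-1}$ is supported away from $B(z,\varepsilon)$, and $a(\psi\varphi\psi^{-1})=a(\varphi)$ by the homomorphism property. This alone does not force $a(\varphi)=0$.

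The more robust approach, and the one we would actually carry out, is to show directly that $a(\varphi_H)\in\{\text{values of } \int H\}$ and to use the isotopy $\varphi_{sH}$, $s\in[0,1]$. For smooth approximants $L_k\to L$, the spectral values $c_\pm(L_k,\varphi_{sH}(L_k))$ are critical values of $S_2\ominus S_1$. These correspond to intersection points of $L_k$ with $\varphi_{sH}(L_k)$, together with their actions. Intersection points outside $\supp H$ have action exactly $0$, while those inside $B(z,\varepsilon)$ have action bounded by $\|H\|$, which tends to zero as $\varepsilon\to0$ but is not identically zero. The main obstacle is to show that the constant value $a(\varphi_{sH})=c_+=c_-$ in the limit equals $0$. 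We would argue as follows: $s\mapsto a(\varphi_{sH})$ is continuous and additive, because $\varphi_{sH}\varphi_{tH}=\varphi_{(s+t)H}$ for autonomous $H$. Hence it is linear, $a(\varphi_{sH})=s\,a(\varphi_H)$. At the same time, the spectral values are limits of actions in the set $\{0\}\cup[-s\|H\|_\infty,s\|H\|_\infty]$. The bounded ball therefore only gives $|a|\le s\|H\|_\infty$, which is consistent with linearity and so yields no contradiction.

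To conclude, we would instead rescale. Using a conformal map $\psi$ of ratio $\rho$ centred at $z$ (Proposition \ref{conformal invariance phi}), the maps supported in $B(z,\varepsilon)$ correspond to maps supported in $B(z,\rho\varepsilon)$, and actions scale by $\rho$. Iterating the homomorphism property then gives $a(\varphi^N)=N a(\varphi)$, while $\varphi^N$ is still supported in $B(z,\varepsilon)$. Its action shift is, however, bounded by the Lipschitz (area) estimate $|a|\le C\varepsilon^2$ coming from limits of intersection actions; applying this bound to $\varphi^N$ gives $N|a(\varphi)|\le C\varepsilon^2$ for all $N$. Letting $N\to\infty$ forces $a(\varphi)=0$, which completes the reverse inclusion.
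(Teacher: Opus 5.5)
Your first inclusion, the reduction to $\varphi(\tilde L)=T_{a(\varphi)}\tilde L$, and the additivity $a(\varphi\psi)=a(\varphi)+a(\psi)$ for maps supported in $B(z,\varepsilon)$ are all fine.

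\textbf{The gap.} Your last step uses the bound $|a(\psi)|\le C\varepsilon^2$, uniformly over \emph{all} $\psi\in\mathrm{DHam}_c(T^*N)$ supported in $B(z,\varepsilon)$, applied to $\psi=\varphi^N$ with $N\to\infty$. Your justification, that spectral values are limits of actions of intersection points and these are controlled by the area of the ball, is false.

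\begin{itemize}
\item Take smooth approximants $L_k\to L$ and a point $x=\psi(y)\in L_k\cap\psi(L_k)\cap B(z,\varepsilon)$. The action of $x$ contains the term $f_{L_k}(y)-f_{L_k}(x)$.
\item Nothing prevents $L_k$ from crossing the ball in several sheets. Only $\gamma\textnormal{-supp}(L)$ avoids the ball, and Lemma \ref{lem:gamSupLimInf} gives just one inclusion.
\item A map supported in the ball can push one sheet across another. The resulting intersection points have action equal to the area enclosed between the sheets, which is unrelated to $\varepsilon$.
\item For the same reason, your earlier claim that intersection points in the ball have action at most $\|H\|$ is wrong. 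Only the minmax values obey the Hofer bound.
\end{itemize}

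The estimate you can honestly get is $|a(\psi)|\le$ the Hofer norm of a generating Hamiltonian. This grows linearly along $\varphi^N$, so the iteration yields nothing, as you yourself observed in the $\varphi_{sH}$ paragraph. A uniform estimate of the kind you want is in fact true. However, it requires an energy--capacity argument: Lagrangian spectral invariants are dominated by Hamiltonian ones, and for maps supported in a displaceable ball the latter are bounded by twice its displacement energy. You neither prove nor cite this, and the conformal rescaling paragraph does not supply it.

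\textbf{The missing idea.} Instead of estimating $a$, compare with a brane that $\varphi$ does not move. After shrinking $\varepsilon$ if necessary, choose $f\in C^\infty(N,\R)$ with $\Gamma_f\cap B(z,\varepsilon)=\emptyset$. The Hamiltonian of $\varphi$ vanishes near $\Gamma_f$, so $\varphi(\Gamma_f)=\Gamma_f$ as branes. Proposition \ref{invariance phi} (extended to the completion) together with the translation rule then gives
\[
c_+(\Gamma_f,\tilde L)=c_+(\varphi(\Gamma_f),\varphi(\tilde L))=c_+(\Gamma_f,T_a\tilde L)=c_+(\Gamma_f,\tilde L)+a ,
\]
hence $a=0$. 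This is the paper's argument; it needs neither an energy estimate nor the iteration $\varphi^N$.
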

\begin{proof} Let $z \in T^*N$, $\varepsilon >0$ and $\varphi \in \mathrm{DHam}_c(T^*N)$ supported in $B(z,\varepsilon)$. Clearly 
$$\varphi(L) \ne L \Rightarrow \varphi(\tilde{L}) \ne \tilde{L}\, ,$$
so that the inclusion $\gamma\textnormal{-supp}(L) \subseteq c\textnormal{-supp}(\tilde{L})$ is satisfied. In order to prove the other one, we want to prove that the case $\varphi(L) = L$ and $\varphi(\tilde{L}) \ne \tilde{L}$ is impossible for $\varphi$ supported in a small ball. Let $z \notin \gamma\textnormal{-supp}(L)$. This means that there exists $\varepsilon > 0$ such that for all $\varphi \in \mathrm{DHam}_c(T^*N)$, supported in $B(z,\varepsilon)$, we have $\varphi(L) = L$. Equivalently, $\varphi(\tilde{L}) = T_a \tilde{L}$ for some $a \in \mathbb{R}$. To conclude, we need to prove that $a = 0$. Let us then consider any $f \in C^{\infty}(N,\mathbb{R})$ such that $\Gamma_f \cap B(z,\varepsilon) = \emptyset$. Then
$$c_+(\varphi(\Gamma_f),\varphi(\tilde L)) = c_+(\varphi(\Gamma_f),T_a\tilde{L}) = c_+(\varphi(\Gamma_f),\tilde{L}) + a = c_+(\Gamma_f,\tilde{L}) + a\, .$$
Since, by Proposition \ref{invariance phi}, $c_+(\varphi(\Gamma_f),\varphi(\tilde L)) = c_+(\Gamma_f,\tilde{L})$, we immediately conclude that $a = 0$ so that $\varphi(\tilde{L}) = \tilde{L}$. 
\end{proof}
\indent We now examine the relationship between the $\gamma$-support and the support of an isotopy (the latter notion is recalled below). The key ingredient is the following fragmentation lemma due to A. Banyaga \cite[Lemma~III.3.2]{ban78}.

\begin{lem}\label{Banyaga}
Let $(M,\omega)$ be a closed symplectic 
manifold and $(U_j)_{j \in [1,N]}$ be an open cover of $M$. Then any Hamiltonian isotopy $(\varphi_t)_{t \in [0,1]}$ can be written as a 
composition of Hamiltonian isotopies $(\varphi_{j,t})_{t \in [0,1]}$ with Hamiltonian supported in some $U_{k(j)}$. The same holds for $M$ open and compactly supported Hamiltonian isotopies.
\end{lem}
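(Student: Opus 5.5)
The plan is to prove the fragmentation statement by reducing it to a local, short-time problem, in the spirit of Banyaga's original argument. Let $(\varphi_t)_{t\in[0,1]}$ be generated by $H:[0,1]\times M\to\R$, compactly supported in the open case. Choose a partition of unity $(\rho_j)_{j\in[1,N]}$ subordinate to the cover $(U_j)$. In the open case only finitely many $U_j$ meet the compact support of $H$, so we may restrict to those. Subdivide $[0,1]$ into small intervals $[t_{k},t_{k+1}]$ so that on each the isotopy $\varphi_{t_{k}}^{-1}\circ\varphi_t$ is $C^1$-close to the identity. Writing $\varphi_1$ as the product of these time pieces, it is enough to fragment an isotopy that stays $C^1$-close to $\mathrm{Id}$.

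For such a small isotopy $\psi_t$, use the Weinstein identification of a neighbourhood of the diagonal in $M\times M$ with a neighbourhood of the zero section in $T^*\Delta$, as in the proposition on graphs of Hamiltonian diffeomorphisms near the identity. The graph $\Gamma_{\psi_t}$ is then the graph of an exact form $dF_t$ on $\Delta\cong M$. Here $F_t$ depends smoothly on $t$, vanishes at $t=0$, and may be normalized to be compactly supported, because flux zero gives exactness.

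Next define $F_t^{(j)}=(\rho_1+\dots+\rho_j)F_t$. Each $\Gamma_{dF_t^{(j)}}$ is still $C^1$-small, so it is the graph of a symplectomorphism $\psi^{(j)}_t$, with $\psi_t^{(0)}=\mathrm{Id}$ and $\psi_t^{(N)}=\psi_t$. These are Hamiltonian isotopies, since $t\mapsto F_t^{(j)}$ is a path of exact generating forms, so the flux vanishes. The successive quotients $(\psi_t^{(j-1)})^{-1}\psi_t^{(j)}$ equal the identity wherever $\rho_j=0$. Indeed, there $F_t^{(j)}=F_t^{(j-1)}$ near the point, and the local generating-function correspondence is pointwise.

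One care point is that, in the Weinstein chart, a point $z$ with $\rho_j\equiv0$ near $z$ and its image must both lie in the region where the two generating functions agree. This is guaranteed because the maps are $C^0$-close to the identity. To make this work, shrink the supports by taking $\supp\rho_j\subset V_j\Subset U_j$ and choosing the time step small compared with $\mathrm{dist}(V_j,\partial U_j)$. The quotient isotopies are then supported in $U_j$. Their Hamiltonians are compactly supported in $U_j$, up to an additive normalization in the closed case, since a symplectic isotopy with zero flux supported in $U_j$ has a Hamiltonian that is constant outside its support.

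Finally, compose: $\varphi_1$ is a product of at most $N\cdot K$ isotopies, with $K$ the number of time steps, each supported in some $U_{k(j)}$. The main obstacle is the quantitative step. We need uniform $C^1$-smallness, so that the cut-off forms $d(\sum\rho_i F_t)$ remain graphs of diffeomorphisms. The cut-off adds terms $F_t\,d\rho_j$, which are small only because $F_t$ itself is $C^0$-small, so we must bound $\|F_t\|_{C^1}$ by the $C^1$-distance of $\psi_t$ to the identity. We also need to verify that the quotient maps are genuinely Hamiltonian with support in $U_j$. Here we use $H^1_c(U_j)$-flux considerations: the flux of each quotient is computed from $[d(\rho_jF_t)]=0$.
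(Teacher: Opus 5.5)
The paper does not prove this lemma. It quotes Banyaga's Lemma III.3.2 and only adds the remark that, for a cover by small balls, the Hamiltonians may be taken supported in the balls because the complement of a small ball is connected. Your outline is Banyaga's argument:
\begin{itemize}
\item subdividing time so that each piece stays $C^1$-close to the identity (this is also what makes ``flux $=$ class of the graph form'' valid on a closed non-exact $M$);
\item using the Weinstein chart $\Phi$ of the diagonal;
\item cutting off the primitive $F_t$ by $\rho_1+\dots+\rho_j$, and telescoping.
\end{itemize}
It correctly produces Hamiltonian isotopies $\theta^{(j)}_t=(\psi^{(j-1)}_t)^{-1}\psi^{(j)}_t$. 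Thanks to your $\delta$-shift care point, each is the identity outside the $\delta$-neighbourhood of $\mathrm{supp}\,\rho_j$, hence supported in $U_j$.

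The step that fails as written is the passage from ``isotopy supported in $U_j$'' to ``Hamiltonian supported in $U_j$''. Off the support of the isotopy its Hamiltonian is only \emph{locally} constant. When $M\setminus U_j$ is disconnected there may be no Hamiltonian supported in $U_j$ at all. For example, on $S^2$ with area form $dz\wedge d\theta$, let $U=\{|z|<a'\}$ and take the flow of $f(z)$ with $f'$ supported in $(-a,a)$, $0<a<a'<1$, and $\int f'\neq 0$. This flow is supported in $U$, but every Hamiltonian generating it is $f+c(t)$, whose values near the two poles differ by $\int f'$. What is really needed is the vanishing of the $H^1_c(U_j)$-flux, which you assert but do not prove. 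There are two ways to close this.

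(a) First refine the cover by small balls $B_i$, each contained in some $U_j$. Since $M\setminus B_i$ is connected, the Hamiltonian of a piece that is the identity off a compact $C\subset B_i$ is constant on the component of $M\setminus C$ containing $M\setminus B_i$. Normalizing it to $0$ there gives a Hamiltonian compactly supported in $B_i$; this is the paper's remark.

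(b) Argue directly with Hamilton--Jacobi. If $\psi_t$ has Hamiltonian $H_t$ and $\Phi(\Gamma_{\psi_t})=\mathrm{graph}(dF_t)$, then $H_t(\psi_t(x))=\epsilon\,\partial_tF_t(q)+c(t)$. Here $q$ is the base point of $\Phi(x,\psi_t(x))$ and $\epsilon=\pm1$ is a fixed sign. Write $c_j(t)$ for the constant attached to $\psi^{(j)}_t$. The Hamiltonian of $\theta^{(j)}_t$ is $K_t=(H^{(j)}_t-H^{(j-1)}_t)\circ\psi^{(j-1)}_t$. At every $x$ outside the $\delta$-neighbourhood of $\mathrm{supp}\,\rho_j$ you already know that $\psi^{(j)}_t(x)=\psi^{(j-1)}_t(x)$, with a common base point $q\notin\mathrm{supp}\,\rho_j$. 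Hence $K_t(x)=\epsilon\,\rho_j(q)\,\partial_tF_t(q)+c_j(t)-c_{j-1}(t)=c_j(t)-c_{j-1}(t)$, a single constant. So $K_t-(c_j(t)-c_{j-1}(t))$ is compactly supported in $U_j$. This is the precise content of your ``$[d(\rho_jF_t)]=0$ in $H^1_c(U_j)$''.
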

\noindent By ``support of an isotopy $(\varphi_t)_{t \in [0,1]}$'', we mean the closure of the set $$\set{ z \in M: \text{ there exists } t \in (0,1] \text{ such that } \varphi_t(z) \ne z }.$$ 
If the complement of the support is connected and the isotopy is generated by a Hamiltonian $H$ defined on $[0,1] \times M$, this is also the projection on $M$ of the support of $H$. When
the isotopy is implicit, we write $\textnormal{supp}(\varphi)$ for the support of the corresponding isotopy such that $\varphi_1 = \varphi$. Note that in Banyaga’s fragmentation theorem, if the open cover is by small balls, we may assume the support of the Hamiltonians are in the $U_{k(j)}$ (since the complement of a small ball is always connected). \\
We shall often use the next proposition.
\begin{prp} \label{serve dopo}
Let $L \in\widehat{\mathfrak{L}_0}(T^*N)$. Let $\varphi\in \mathrm{DHam}_c(T^*N)$ such that $\varphi (L) \ne L$. Then
$\gamma\textnormal{-supp}(L)\cap \textnormal{supp}(\varphi)\neq\emptyset$.
\end{prp}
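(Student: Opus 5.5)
We argue by contraposition: assuming $\gamma\textnormal{-supp}(L)\cap \textnormal{supp}(\varphi)=\emptyset$, we show $\varphi(L)=L$. The idea is to break $\varphi$ into small pieces with Lemma \ref{Banyaga}, and to check that each piece fixes $L$ because it lives in a small ball away from the $\gamma$-support.

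First, cover the support. Set $K:=\textnormal{supp}(\varphi)$, which is compact since $\varphi\in\mathrm{DHam}_c(T^*N)$. Every $z\in K$ lies outside $\gamma\textnormal{-supp}(L)$. By definition of the $\gamma$-support there is then $\varepsilon_z>0$ such that every $\psi\in\mathrm{DHam}_c(T^*N)$ supported in $B(z,\varepsilon_z)$ satisfies $\gamma(L,\psi(L))=0$, that is $\psi(L)=L$ in $\widehat{\mathfrak{L}_0}(T^*N)$. By compactness, finitely many balls $U_j:=B(z_j,\varepsilon_{z_j})$, $j=1,\dots,m$, cover $K$.

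Next, fragment $\varphi$. We want to apply Lemma \ref{Banyaga} to the compactly supported isotopy $(\varphi_t)$ and the cover of a neighbourhood of $K$ by the $U_j$. This gives
$$\varphi=\varphi_{1}\circ\cdots\circ\varphi_{r},$$
where each $\varphi_i$ is generated by a Hamiltonian supported in some $U_{k(i)}$. The lemma is stated for an open cover of the whole manifold, so we add the open set $U_0:=T^*N\setminus K$. The one technical point is to ensure that no fragment is supported in $U_0$ while moving $L$. Two routes are available.
\begin{itemize}
\item Apply the fragmentation inside the open manifold $U:=\bigcup_{j\ge1}U_j$, where the isotopy is compactly supported. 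This is legitimate provided the isotopy stays inside $U$, which holds because its support is contained in $K\subset U$.
\item Alternatively, note that fragments supported in $U_0$ are also admissible. Any point of $U_0$ outside $\gamma\textnormal{-supp}(L)$ is harmless by the same definition, so we may enlarge the cover to small balls around every point of $T^*N\setminus\gamma\textnormal{-supp}(L)$. We then only need the fragments to avoid the support; Banyaga's construction, via a partition of unity subordinate to the cover, localizes each fragment's support inside $\textnormal{supp}(\varphi)$ intersected with a cover element.
\end{itemize}
Since the balls are small, their complements are connected, and the remark after Lemma \ref{Banyaga} says the Hamiltonian supports can be taken inside the balls. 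This step is the main subtlety.

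Finally, conclude by induction. Each $\varphi_i$ is supported in some $U_{k(i)}$, so $\varphi_i(L)=L$ in the completion. The group $\mathrm{DHam}_c(T^*N)$ acts on $\widehat{\mathfrak{L}_0}(T^*N)$ by isometries (Theorem \ref{distanze} extended to the completion), so the action is well defined there. Hence
$$\varphi(L)=\varphi_1\circ\cdots\circ\varphi_r(L)=L,$$
contradicting the hypothesis $\varphi(L)\neq L$.
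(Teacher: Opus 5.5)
Your argument is the paper's proof: cover the compact set $\textnormal{supp}(\varphi)$ by finitely many balls on which every compactly supported Hamiltonian diffeomorphism fixes $L$, fragment $\varphi$ with Lemma \ref{Banyaga}, and compose the fragments, each of which fixes $L$. Your Route 1 is the step the paper takes implicitly, and like the paper it tacitly uses that the generating Hamiltonian vanishes outside $\textnormal{supp}(\varphi)$; this is automatic when $T^*N\setminus\textnormal{supp}(\varphi)$ is connected, in which case Route 2 is unnecessary.
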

\begin{proof} Assume, by contradiction, that $\gamma\textnormal{-supp}(L) \cap \textnormal{supp}(\varphi) = \emptyset$. Let $z \in \textnormal{supp}(\varphi)$ so that $z \notin \gamma\textnormal{-supp}(L)$. This means that there exists $\varepsilon = \varepsilon(z) > 0$ such that for every $\psi \in \mathrm{DHam}_c(T^*N)$ supported in $B(z,\varepsilon)$ we have $\psi(L) = L$. Take a finite cover $(B(z_j,\varepsilon_j))_{j \in [1,N]}$ of $\textnormal{supp}(\varphi)$ given by these balls. By Lemma \ref{Banyaga}, we can write
$$\varphi = \psi_1 \circ \ldots \circ \psi_N\, ,$$
where the $\psi_j \in \mathrm{DHam}_c(T^*N)$ are supported in some $B(z_{k(j)},\varepsilon_{k(j)})$ and such that $\psi_j(L) = L$. This implies $\varphi(L) = L$, giving the desired contradiction. 
\end{proof}
\indent We now introduce the definition of $\gamma$-coisotropic subset in a symplectic manifold. Of course we will prove that this notion coincides with usual coisotropy in  case the subset is a smooth submanifold ( see the beginning of Section \ref{sezione 23}). Note that an analogue of $\gamma$-coisotropy, with $\gamma$ replaced by the Hofer distance (going under the name of “local rigidity”) had been defined by Usher in \cite{ush19}. \\
In what follows, the $\gamma$-norm of $\varphi \in \mathrm{DHam}(T^*N)$ is defined as
\begin{equation}
\gamma(\varphi) : =\sup_{L\in\mathfrak{L}_0(T^*N)} \gamma(L, \varphi(L))\, .
\end{equation}
We refer to \cite[Definition 2.1 and Proposition 2.3]{carVit08} for the detailed properties of $\gamma$. Moreover, we have:
$$\gamma(\varphi) \le \text{osc}(H) := \max_{(z,t) \in T^*N \times [0,1]} H(t,z) - \min_{(z,t) \in T^*N \times [0,1]} H(t,z) \, ,$$
see \cite[Proposition 2.6]{carVit08} for the proof. \\
Similarly to $\widehat{\mathfrak{L}_0}(T^*N)$ and $\widehat{\mathscr{L}_0}(T^*N)$, $\widehat{\mathrm{DHam}_c}(T^*N)$ and $\widehat{\mathrm{DHam}}(T^*N)$ denotes respectively the completion of $(\mathrm{DHam}_c(T^*N),\gamma)$ and $(\mathrm{DHam}(T^*N),\gamma)$. We have that $\textnormal{Homeo}(T^*N) \subseteq \widehat{\mathrm{DHam}}(T^*N)$.
\begin{defn}\label{defn:gamCoi}
    Let $V$ be a subset of $T^*N$. \\
$(i)$ We say that $V$ is $\gamma$-coisotropic at $z\in V$ if there exists $\varepsilon > 0$ such that for any ball $B(z,\eta)$ with $0<\eta<\varepsilon$ there exists $\delta = \delta(\eta)>0$ such that for all $\varphi \in \mathrm{DHam}_c(T^*N)$, supported in $B(z,\varepsilon)$ and such that $\varphi(V) \cap B(z,\eta) = \emptyset$, we have that $\gamma(\varphi)>\delta$. See Figure \ref{fig:gamma coisotropic}. \\
$(ii)$ We say that $V \subset T^*N$ is $\gamma$-coisotropic if $V \ne \emptyset$ and $V$ is $\gamma$-coisotropic at each $z \in V$. 
\end{defn}
\begin{figure}[ht]
  \centering
    %% Creator: Inkscape 1.2.2 (b0a8486541, 2022-12-01), www.inkscape.org
%% PDF/EPS/PS + LaTeX output extension by Johan Engelen, 2010
%% Accompanies image file 'gammaCoisotropic.pdf' (pdf, eps, ps)
%%
%% To include the image in your LaTeX document, write
%%   \input{<filename>.pdf_tex}
%%  instead of
%%   \includegraphics{<filename>.pdf}
%% To scale the image, write
%%   \def\svgwidth{<desired width>}
%%   \input{<filename>.pdf_tex}
%%  instead of
%%   \includegraphics[width=<desired width>]{<filename>.pdf}
%%
%% Images with a different path to the parent latex file can
%% be accessed with the `import' package (which may need to be
%% installed) using
%%   \usepackage{import}
%% in the preamble, and then including the image with
%%   \import{<path to file>}{<filename>.pdf_tex}
%% Alternatively, one can specify
%%   \graphicspath{{<path to file>/}}
%% 
%% For more information, please see info/svg-inkscape on CTAN:
%%   http://tug.ctan.org/tex-archive/info/svg-inkscape
%%
\begingroup%
  \makeatletter%
  \providecommand\color[2][]{%
    \errmessage{(Inkscape) Color is used for the text in Inkscape, but the package 'color.sty' is not loaded}%
    \renewcommand\color[2][]{}%
  }%
  \providecommand\transparent[1]{%
    \errmessage{(Inkscape) Transparency is used (non-zero) for the text in Inkscape, but the package 'transparent.sty' is not loaded}%
    \renewcommand\transparent[1]{}%
  }%
  \providecommand\rotatebox[2]{#2}%
  \newcommand*\fsize{\dimexpr\f@size pt\relax}%
  \newcommand*\lineheight[1]{\fontsize{\fsize}{#1\fsize}\selectfont}%
  \ifx\svgwidth\undefined%
    \setlength{\unitlength}{255.11808861bp}%
    \ifx\svgscale\undefined%
      \relax%
    \else%
      \setlength{\unitlength}{\unitlength * \real{\svgscale}}%
    \fi%
  \else%
    \setlength{\unitlength}{\svgwidth}%
  \fi%
  \global\let\svgwidth\undefined%
  \global\let\svgscale\undefined%
  \makeatother%
  \begin{picture}(1,0.55555565)%
    \lineheight{1}%
    \setlength\tabcolsep{0pt}%
    \put(0,0){\includegraphics[width=\unitlength,page=1]{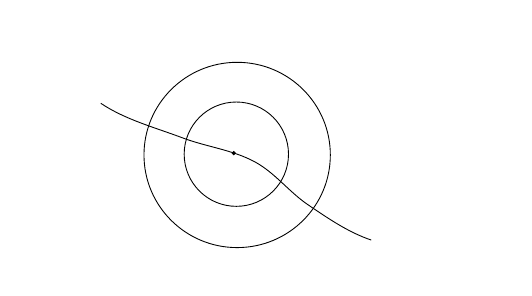}}%
    \put(0.41671371,0.23309479){\color[rgb]{0,0,0}\makebox(0,0)[lt]{\lineheight{1.25}\smash{\begin{tabular}[t]{l}$z$\end{tabular}}}}%
    \put(0,0){\includegraphics[width=\unitlength,page=2]{gammaCoisotropic.pdf}}%
    \put(0.464716,0.31276337){\color[rgb]{0,0,0}\makebox(0,0)[lt]{\lineheight{1.25}\smash{\begin{tabular}[t]{l}$\eta$\end{tabular}}}}%
    \put(0,0){\includegraphics[width=\unitlength,page=3]{gammaCoisotropic.pdf}}%
    \put(0.36056488,0.1546461){\color[rgb]{0,0,0}\makebox(0,0)[lt]{\lineheight{1.25}\smash{\begin{tabular}[t]{l}$\varepsilon$\end{tabular}}}}%
    \put(0,0){\includegraphics[width=\unitlength,page=4]{gammaCoisotropic.pdf}}%
    \put(0.6074761,0.08479984){\color[rgb]{0,0,0}\makebox(0,0)[lt]{\lineheight{1.25}\smash{\begin{tabular}[t]{l}$V$\end{tabular}}}}%
    \put(0.61389267,0.33695298){\color[rgb]{0,0,0}\makebox(0,0)[lt]{\lineheight{1.25}\smash{\begin{tabular}[t]{l}$\varphi(V)$\end{tabular}}}}%
  \end{picture}%
\endgroup%

  \caption{The set $V$ is $\gamma$-coisotropic at $z \in V$ if $\gamma(\varphi)>\delta$.}
\label{fig:gamma coisotropic}
\end{figure}
\noindent In simple terms, the set $V$ is $\gamma$-coisotropic at $z \in V$ when --if you want to move $V$ outside any small ball centered in $z \in V$ of radius $\eta > 0$-- then you need a minimal ``amount of energy''.
\begin{es} \textnormal{The origin $\{(0,0)\} \subset\R^2$ is not $\gamma$-coisotropic. Let us consider a Hamiltonian $H$ compactly supported near the origin and such that $\partial_pH(q,0) = 1$ for $(q,0)$ contained in the support of $H$. Clearly, $\textnormal{osc}(H)$ can be arbitrary small. By using the inequality $\gamma(\varphi) \le \textnormal{osc}(H)$, the corresponding time-1 flow $\varphi$ displaces the origin from a small ball and has arbitrarily small $\gamma$-norm.} 
\end{es}
In order to prove that the notion of $\gamma$-coisotropic subset generalizes that of coisotropic subset, we first show that every Lagrangian is $\gamma$-coisotropic.
\begin{lem} \label{lem Lag} Every $L \in \mathfrak{L}(T^*N)$ is $\gamma$-coisotropic.
\end{lem}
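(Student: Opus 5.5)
We would reduce to the case where the Lagrangian is the zero section, and then get the energy bound from a ``bump function'' comparison using only the spectral invariants of Section \ref{SPEC}.

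\textbf{Step 1 (local replacement by an element of $\mathfrak{L}_0$).} Fix $z\in L$ and take $\varepsilon>0$ small enough that $L\cap B(z,2\varepsilon)$ is an embedded Lagrangian disc. By the local Darboux--Weinstein normal form, any two germs of Lagrangian discs are related by a Hamiltonian isotopy supported in a small ball. So there is $\chi\in \mathrm{DHam}_c(T^*N)$ mapping a small disc of $\mathcal{O}_N$ onto a neighbourhood of $z$ in $L$. Then $L':=\chi(\mathcal{O}_N)\in\mathfrak{L}_0(T^*N)$ coincides with $L$ in $B(z,\varepsilon)$, after shrinking $\varepsilon$. Let $\varphi$ be supported in $B(z,\varepsilon)$ with $\varphi(L)\cap B(z,\eta)=\emptyset$. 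Since $\varphi$ only moves points of $B(z,\varepsilon)$, where $L$ and $L'$ agree, we also get $\varphi(L')\cap B(z,\eta)=\emptyset$.

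By definition, $\gamma(\varphi)\ge\gamma(L',\varphi(L'))$. Since $\chi$ acts by isometries (Theorem \ref{distanze}), this equals $\gamma(\mathcal{O}_N,\varphi'(\mathcal{O}_N))$ with $\varphi'=\chi^{-1}\varphi\chi$. Here $\varphi'$ is supported in a small neighbourhood of $z'=\chi^{-1}(z)\in\mathcal{O}_N$, and $\varphi'(\mathcal{O}_N)$ avoids a ball $B(z',\eta')$ whose radius $\eta'$ depends only on $\eta$ and the fixed map $\chi$.

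\textbf{Step 2 (bump comparison).} Choose $f\in C^\infty(N)$ supported in a small disc $D$ around $q_{z'}$, with $\max f=a>0$, $f\ge 0$, and small enough that $\Gamma_{tf}\cap T^*D\subset B(z',\eta')$ for all $t\in[0,1]$. The height $a$ can be chosen depending only on $\eta'$.

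Consider the family $t\mapsto c_\pm(\Gamma_{tf},\varphi'(\mathcal{O}_N))$. The brane $\varphi'(\mathcal{O}_N)$ does not meet $B(z',\eta')$, and $\Gamma_{tf}$ agrees with $\mathcal{O}_N$ outside $B(z',\eta')$. Hence the intersection $\Gamma_{tf}\cap\varphi'(\mathcal{O}_N)$ is independent of $t$. At these points $tf=0$, so the action values (differences of the primitives) are also independent of $t$.

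The spectral invariants depend continuously on $t$ and take values in this fixed set of critical values, which has measure zero by Sard and Remark \ref{S1 - S2}. As in the proof of Proposition \ref{invariance phi}, they are therefore constant. This gives
$$\gamma(\Gamma_f,\varphi'(\mathcal{O}_N))=\gamma(\mathcal{O}_N,\varphi'(\mathcal{O}_N)).$$

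\textbf{Step 3 (conclusion).} By the triangle inequality and the Example computing $\gamma(\Gamma_f,\Gamma_g)$,
$$a=\mathrm{osc}(f)=\gamma(\mathcal{O}_N,\Gamma_f)\le 2\,\gamma(\mathcal{O}_N,\varphi'(\mathcal{O}_N))\le 2\gamma(\varphi).$$
So $\delta(\eta):=a/3$ works.

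\textbf{Main obstacle.} We expect Step 1 to be the delicate point: producing $\chi$ with controlled support, and tracking how $\eta$ transforms into $\eta'$. The constancy argument in Step 2 also needs care. We would have to check that the primitive normalisation of $\Gamma_{tf}$ (zero outside $D$) makes the intersection actions genuinely $t$-independent, and that continuity in $t$ follows from continuous families of GFQIs via Theorem \ref{thm:theSerFib}.
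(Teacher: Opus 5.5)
Your proof is correct and is essentially the paper's argument: a bump graph $\Gamma_{tf}$ localized near the point, constancy in $t$ of the spectral invariants because the intersection with the displaced Lagrangian (and its action values) does not depend on $t$, and the triangle inequality giving $\gamma(\mathcal{O}_N,\Gamma_f)\le 2\gamma(\varphi)$. Your Step 1 replaces $L$ by $L'=\chi(\mathcal{O}_N)\in\mathfrak{L}_0(T^*N)$ agreeing with $L$ near $z$; this usefully makes rigorous a point the paper passes over, namely that $\gamma(\varphi)$ is a supremum over $\mathfrak{L}_0(T^*N)$, though $\chi$ should come from transitivity of $\mathrm{DHam}_c(T^*N)$ on points plus a local normal form, not from an isotopy supported in a small ball.
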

\begin{proof}
The property of being $\gamma$-coisotropic is a local one, i.e. a set is $\gamma$-coisotropic if and only if a sufficiently small neighborhood of each of its points is $\gamma$-coisotropic. This allows us to reduce ourselves to consider a Weinstein chart around a point $z\in L$, containing a small $B_{2n}(z, \varepsilon)$. Through such a chart, $L$ is locally identified with 
$$(\R^n\times \{0\}) \cap B_{2n}(0, \varepsilon)\subset \R^{2n}\, .$$ 
and in the following arguments we tacitly assume we remain in this chart. \\
Fix any positive $\eta \in (0,\varepsilon)$, and consider the ball $B_{2n}(0, \eta)$. We need to show that if $\varphi\in \mathrm{DHam}_c(\R^{2n})$ is supported in $B_{2n}(0, \varepsilon)$ and $\varphi(L)\cap B_{2n}(0, \eta)=\emptyset$, then $\gamma(\varphi)>\delta$, for some $\delta = \delta(\eta) > 0$. \\
We start by constructing an auxiliary exact Lagrangian $L_1$ such that $L$ and $L_1$ intersect transversely at the origin, have no other transverse intersection points, coincide outside of $B_{2n}(0, \eta)$ and 
$\gamma(L, L_1)\geq  C \eta^2$ for a real constant $C>0$. \\
The construction of $L_1$ is simple. $L_1$ is locally identified with the graph of $df$ where $f$ is a smooth function supported in $B_{n}(0, \eta)$, with exactly two critical points, the maximum of order $\eta^2$ and the minimum equal to $0$. Since $L \ne L_1$, we must have
$$\gamma(L, L_1)\geq C\eta^2\, .$$
Let $L_t$ be the graph of $tdf$. We now argue as in the proof of Proposition \ref{invariance phi}. Since $\varphi(L) \cap L_t$ does not depend on $t$, the quantity $\gamma(\varphi(L), L_t)$ takes values in a fixed set of measure zero, therefore is also constant. As a result, we have that 
$$\gamma(\varphi(L), L_1)=\gamma(\varphi(L), L)\, .$$
\begin{figure}[ht]
  \centering
    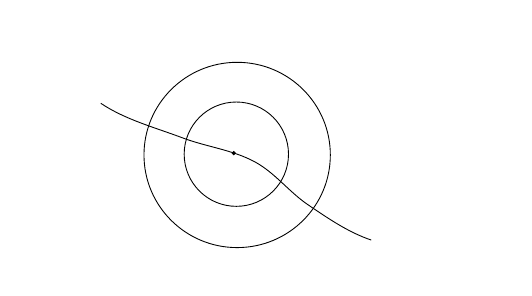
  \caption{The shape of the Lagrangian $L_1$ involved in the proof.}
  \label{fig:limLagProof}
\end{figure}
\noindent The triangle inequality for $\gamma$ gives $ \gamma(L, L_1)\leq \gamma(L, \varphi(L))+\gamma(\varphi(L), L_1)
$. Consequently, by using the previous equality and the symmetry of $\gamma$, we obtain:
    \begin{equation*}
        \gamma(L, \varphi(L)) \ge \frac{\gamma(L,L_1)}{2} \geq  C\eta^2\, .
    \end{equation*}
This proves that $L$ is $\gamma$-coisotropic at every point $z \in L$ so that $L$ is $\gamma$-coisotropic.
\end{proof}
\begin{es} \textnormal{Let $V := [0,1] \times \{0\} \subset \R^2$. Arguing as in the previous example, we conclude that $V$ is not $\gamma$-coisotropic at $(0,0)$ and $(1,0)$. However $V$ is $\gamma$-coisotropic at every point $z := (x,0)$, with $x \in (0,1)$. In order to prove this fact, it is sufficient to notice that coisotropy is a local property and that $V$ coincides with a closed Lagrangian at each interior point. Hence $V := [0,1] \times \{0\}$ is not $\gamma$-coisotropic.} 
\end{es}
\begin{prp} \label{prp:coiImpGamCoi} Let $V$ be a smooth, closed submanifold of $T^*N$. Then $V$ is $\gamma$-coisotropic if and only if $V$ is coisotropic.
\end{prp}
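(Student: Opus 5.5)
We prove the two implications separately; both are local, so we work in a Darboux chart around a point $z\in V$.

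\emph{Coisotropic $\Rightarrow$ $\gamma$-coisotropic.} Let $V$ be coisotropic of codimension $k$ and $z\in V$. By the local normal form for coisotropic submanifolds, there is a Darboux chart near $z$ in which
$$V=\set{(q,p)\in\R^{2n}: p_{n-k+1}=\dots=p_n=0}.$$
This $V$ contains the Lagrangian $\ell=\set{p=0}$ through $z$. Fix $\eta$ and let $\varphi$ be supported in $B(z,\varepsilon)$ with $\varphi(V)\cap B(z,\eta)=\emptyset$. Then $\varphi(\ell)\cap B(z,\eta)=\emptyset$ as well. Extend $\ell$ near $z$ to a closed exact Lagrangian $L$, for instance the zero section in a Weinstein chart, with $L\cap B(z,\varepsilon)=\ell\cap B(z,\varepsilon)$. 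Then $\varphi(L)\cap B(z,\eta)=\emptyset$. The proof of Lemma \ref{lem Lag} gives $\gamma(L,\varphi(L))\ge C\eta^2$, and hence $\gamma(\varphi)\ge C\eta^2>\delta:=C\eta^2/2$. The only care needed is that Lemma \ref{lem Lag} really used only the displacement of $L\cap B(z,\eta)$ by a map supported in $B(z,\varepsilon)$, with $\delta$ independent of $\varphi$. This follows from its proof.

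\emph{$\gamma$-coisotropic $\Rightarrow$ coisotropic.} We argue by contraposition. Suppose $T_zV^{\perp}\not\subset T_zV$ at some $z\in V$. Then some $w\in T_zV^\perp\setminus T_zV$ exists, and the plan is the following.

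1. Choose a linear function $h$ near $z$ whose Hamiltonian vector field $X_h$ is transverse to $V$ at $z$ and which satisfies $dh|_{T_zV}=0$. Concretely, take $h(x)=\omega(w',x-z)$ for suitable $w'$: then $X_h$ is proportional to $w'$, so we need $w'\notin T_zV$ and $\omega(w',T_zV)=0$, i.e. $w'\in T_zV^\perp\setminus T_zV$. The vector $w$ works.

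2. Since $V$ is smooth, in local coordinates $V$ is the graph of a function over $T_zV$ whose deviation is quadratic. Hence $|h|\le C r^2$ on $V\cap B(z,r)$, while $X_h$ has constant size.

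3. Model on the origin example: cut off $h$ to $H=\chi\cdot h$ supported in $B(z,r)$. Near $V$ the function is small, but the oscillation of $H$ on the whole ball is of order $r$, not $r^2$. So instead use a Hamiltonian of the form $H=\rho(\mathrm{dist}_{T_zV\text{-direction}})\cdot h$, with $h$ depending only on the coordinate conjugate to the direction $w$.

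Better: pick Darboux coordinates with $w=\partial_{p_1}$ and $V$ locally $\set{p_1=g(\text{rest})}$. Since $w\in T_zV^\perp$, the direction $\partial_{q_1}$ lies in $T_zV$, so that $g$ is independent of $p_1$ up to higher order. Take $H=\theta\,\chi(x)\,q_1$ with $\theta$ small. The flow translates in $p_1$ by an amount of order $\theta$ times the time, and $\mathrm{osc}(H)$ is of order $\theta r$. To push $V\cap B(z,\eta)$ out of $B(z,\eta)$ we need a translation of size about $\eta$, at cost about $\eta\cdot r$. The point is to let $r$ be comparable to $\eta$ in the $q_1$-direction only, exploiting that $H$ varies only in $q_1$. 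I expect this scaling argument to be the main obstacle: one must show $\gamma(\varphi)\le\mathrm{osc}(H)$ can be made arbitrarily small for fixed $\eta$.

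I would first try to reduce to the linear case $V=\R^{2n-k}$ non-coisotropic, hence containing a symplectic direction transverse to its isotropic part. After a symplectic splitting, $V\supset\R^2_{\text{symp}}\times\cdots$ factors, and the $\R^{2}$ factor in which $V$ projects to a point, or a proper isotropic line times something, reduces everything to the origin example in $\R^2$. In that example displacement costs arbitrarily little $\gamma$, and the $\gamma$-norm of a product Hamiltonian is controlled by its oscillation. Smooth curvature of $V$ is then absorbed by taking $\eta$ small.
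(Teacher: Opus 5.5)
Your proof that coisotropic implies $\gamma$-coisotropic is the paper's argument. In the local normal form, $V$ contains a Lagrangian through $z$; any $\varphi$ displacing $V$ from $B(z,\eta)$ also displaces that Lagrangian; and Lemma \ref{lem Lag} gives a bound $C\eta^2$ that does not depend on $\varphi$. The paper does not prove the converse; it cites \cite[Proposition 7.6]{vit22}. Your sketch of the converse is essentially right when $V$ is linear: you split off a symplectic plane in which $V$ projects to a point and use the planar example. For curved $V$, however, it has a genuine gap.

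\textbf{Where the converse breaks.} The problem is in steps 2--3 and in your last sentence. You take $h$ linear with only $dh|_{T_zV}=0$, so $h|_V$ vanishes only to first order at $z$.

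\emph{The slab must be thick.} A Hamiltonian of your type ($\theta\chi q_1$, or $\delta\rho(h/\delta)\sigma$) only moves points in the slab where it is nonzero. Points of $V\cap B(z,\eta)$ outside that slab do not move, so they stay in $B(z,\eta)$. Hence the slab must contain $h(V\cap B(z,\eta))$, which typically has size of order $\eta^2$. Since the translation along $w$ must have size about $\eta$, this forces $\mathrm{osc}(H)\gtrsim\eta^3$, a bound that does not depend on $\varphi$.

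\emph{Shrinking $\eta$ does not help.} The negation of Definition \ref{defn:gamCoi} asks, for one fixed $\eta$, for displacing maps with $\gamma(\varphi)\to 0$. So ``absorbing the curvature by taking $\eta$ small'' proves nothing.

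\emph{You cannot straighten $V$ to the linear model.} A curved non-coisotropic submanifold need not be locally symplectomorphic to a linear one. For example, take a surface in $\R^6$ (never coisotropic) on which $\omega|_V$ vanishes at $z$ but not at nearby points.

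\emph{Two side errors.} With $w=\partial_{p_1}$, the condition $w\in T_zV^\perp$ means $T_zV\subset\ker dq_1$, so $\partial_{q_1}\notin T_zV$, the opposite of what you wrote. Also, $V=\{p_1=g\}$ is a hypersurface, hence automatically coisotropic, so that model cannot occur here.

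\textbf{The missing idea.} Choose $h$ vanishing \emph{identically} on $V$ near $z$, not just to first order. Since $\iota_w\omega$ annihilates $T_zV$, there is $h$ with $h|_V\equiv 0$ near $z$ and $dh_z=\iota_w\omega$, so $X_h(z)=w\notin T_zV$. Equivalently, by Darboux--Carath\'eodory, take Darboux coordinates with $q_1|_V\equiv 0$.

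Then set $H_\delta=\delta\rho(h/\delta)\sigma$, where $\rho$ is compactly supported with $\rho(s)=s$ near $0$, and $\sigma$ is supported in $B(z,\varepsilon)$ and equals $1$ on a flow box of $X_h$ at $z$.
\begin{itemize}
\item Along trajectories $\dot h=\delta\rho(h/\delta)\,\omega(X_h,X_\sigma)$, so by Gronwall the set $\{h=0\}$ is invariant.
\item On $\{h=0\}$ one has $X_{H_\delta}=\sigma X_h$, so $V$ is pushed along $X_h$.
\item Take a flow box whose transversal is a hypersurface containing $V$ near $z$; this is possible because $X_h(z)\notin T_zV$. It shows that, for $\eta\ll t_0$, the time-$t_0$ map sends $V$ off $B(z,\eta)$.
\item Meanwhile $\gamma(\varphi)\le\mathrm{osc}(t_0H_\delta)\le Ct_0\delta$, which tends to $0$ as $\delta\to0$ with $\eta$ fixed.
\end{itemize}
This is the mechanism behind the Hofer analogue in \cite{ush19}; the inequality $\gamma(\varphi)\le\mathrm{osc}(H)$ transfers it to $\gamma$.
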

\begin{proof} We only prove that if $V$ is coisotropic then $V$ is $\gamma$-coisotropic. We refer to \cite[Proposition 7.6.]{vit22} for details of the other direction. If $V$ is coisotropic, locally $V$ can be identified with
$$\set{ (q_1, \ldots, q_n,p_1,\ldots,p_k,0,\ldots,0): q_j,p_j \in \mathbb{R}}\, .$$
This contains the Lagrangian $\set{ (q_1, \ldots, q_n,0,\ldots,0): q_j \in \mathbb{R}}$ which, by Lemma \ref{lem Lag} is $\gamma$-coisotropic. Then $V$ is $\gamma$-coisotropic.
\end{proof}
\indent We aim now to give a proof that $\gamma$-supports are $\gamma$-coisotropic. In order to do so, we need  the fact that the $\gamma$-support of the $\gamma$-limit of a sequence $(L_n)_{n \ge 1}$ in $\widehat{\mathfrak{L}_0}(T^*N)$ is contained in the topological lower limit of the sequence $(\gamma\textnormal{-supp}(L_n))_{n \ge 1}$.
\begin{lem}\label{lem:gamSupLimInf}
Let $(L_n)_{n \ge 1}$ be a sequence in $\widehat{\mathfrak{L}_0}(T^*N)$ $\gamma$-converging to $L \in \widehat{\mathfrak{L}_0}(T^*N)$. Then
$$\gamma\textnormal{-supp}(L) \subset \liminf \gamma\textnormal{-supp}(L_n):=\{x=\lim_n x_n \mid x_n \in \gamma\textnormal{-supp}(L_n)\} .$$
In particular, if there exists $z \in T^*N$ and $\varepsilon > 0$ such that $\gamma\textnormal{-supp}(L_n) \cap B(z, \varepsilon) = \emptyset$ for all $n \gg 0$, then $\gamma\textnormal{-supp}(L) \cap B(z, \varepsilon)=\emptyset$.
\end{lem}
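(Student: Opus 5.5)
I will prove the contrapositive: if $z\notin\liminf \gamma\textnormal{-supp}(L_n)$, then $z\notin\gamma\textnormal{-supp}(L)$. The "in particular" clause is the same statement applied to every point of $B(z,\varepsilon)$, since the ball is open. Indeed, if $w\in B(z,\varepsilon)$, pick a smaller ball $B(w,\varepsilon')\subset B(z,\varepsilon)$; then $\gamma\textnormal{-supp}(L_n)\cap B(w,\varepsilon')=\emptyset$ for $n\gg 0$, and the main claim excludes $w$.

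First I unwind the hypothesis. If $z\notin\liminf\gamma\textnormal{-supp}(L_n)$, there is no sequence $x_n\in\gamma\textnormal{-supp}(L_n)$ converging to $z$. Hence there exist $\varepsilon>0$ and a subsequence $n_k$ with $\gamma\textnormal{-supp}(L_{n_k})\cap B(z,\varepsilon)=\emptyset$. Otherwise, for every $m$ the set $\gamma\textnormal{-supp}(L_n)$ would meet $B(z,1/m)$ for all $n$ large, and a diagonal choice would produce a sequence converging to $z$. (Should the supports be empty for finitely many $n$, those indices are irrelevant.) Passing to the subsequence, which still $\gamma$-converges to $L$, I may assume $\gamma\textnormal{-supp}(L_n)\cap B(z,\varepsilon)=\emptyset$ for all $n$.

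Next, let $\varphi\in\mathrm{DHam}_c(T^*N)$ be supported in $B(z,\varepsilon/2)$, so that $\textnormal{supp}(\varphi)$ is a compact subset of $B(z,\varepsilon)$ and therefore disjoint from $\gamma\textnormal{-supp}(L_n)$. By Proposition \ref{serve dopo}, which is stated for elements of the completion, we get $\varphi(L_n)=L_n$ for every $n$. Since $\mathrm{DHam}_c(T^*N)$ acts by isometries on $\widehat{\mathfrak{L}_0}(T^*N)$ (Theorem \ref{distanze} extended to the completion),
$$\gamma(\varphi(L),L)\le \gamma(\varphi(L),\varphi(L_n))+\gamma(\varphi(L_n),L_n)+\gamma(L_n,L)=2\gamma(L_n,L)\to 0.$$
Thus $\varphi(L)=L$ for every $\varphi$ supported in $B(z,\varepsilon/2)$, so $z\notin\gamma\textnormal{-supp}(L)$.

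The only delicate points are the extraction of a uniform ball along a subsequence, and checking that Proposition \ref{serve dopo} applies when $\textnormal{supp}(\varphi)$ lies inside a ball avoiding the support. Both are mild. I expect the subsequence and diagonal step to require the most care in writing, in particular handling the convention for $\liminf$ as the set of limits of full sequences.
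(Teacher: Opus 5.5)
Your proof is correct and uses the same two ingredients as the paper: Proposition \ref{serve dopo}, and the continuity of the isometric action of $\mathrm{DHam}_c(T^*N)$ on $\widehat{\mathfrak{L}_0}(T^*N)$. The paper argues directly rather than by contrapositive: if $\gamma(L,\varphi(L))>0$ for some $\varphi$ supported near $z$, then $\gamma(L_n,\varphi(L_n))>0$ for all large $n$, so $\gamma\textnormal{-supp}(L_n)$ meets the ball for all large $n$. This is exactly the contrapositive of your subsequence argument, and it avoids the extraction step.
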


\begin{proof}The latter statement is a simple consequence of the former one. We argue by contradiction. If $\gamma\textnormal{-supp}(L) \cap B(z, \varepsilon)\ne\emptyset$, we may assume the center of the ball $B(z,\varepsilon)$ to be in $\gamma\textnormal{-supp}(L)$. Consequently, there exists $\varphi \in \mathrm{DHam}_c(T^*N)$, supported in $B(z,\varepsilon)$, such that $\gamma(L,\varphi(L)) > 0$. 
Then, since by hypothesis $\gamma(L,\varphi(L)) = \lim_{n \to +\infty} \gamma(L_n,\varphi(L_n))$, we have $\gamma(L_n,\varphi(L_n)) > 0$ for $n \gg 0$. This implies --by Proposition \ref{serve dopo}-- that $$\textnormal{supp}(\varphi) \cap \gamma\textnormal{-supp}(L_n) \ne \emptyset \quad \text{for} \quad n \gg 0\, .$$ 
Since $\textnormal{supp}(\varphi) \subset B(z,\varepsilon)$, we therefore have that 
$$B(z,\varepsilon) \cap \gamma\textnormal{-supp}(L_n) \ne \emptyset\quad \text{for} \quad n \gg 0\, ,$$  
which gives the desired contradiction. 
\end{proof}
\noindent
From the previous result, the next proposition easily follows. 
\begin{prp} \label{gamma coisotropic}
If $L \in \widehat{\mathfrak{L}_0}(T^*N)$ then $\gamma\textnormal{-supp}(L)$ is $\gamma$-coisotropic.
\end{prp}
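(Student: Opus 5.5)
Fix $z\in\gamma\textnormal{-supp}(L)$ and argue by contradiction, assuming $\gamma\textnormal{-supp}(L)$ is not $\gamma$-coisotropic at $z$. First note that $\gamma\textnormal{-supp}(L)\neq\emptyset$: otherwise, for any $\varphi\in\mathrm{DHam}_c(T^*N)$, Proposition \ref{serve dopo} gives $\varphi(L)=L$. Since $L$ is a $\gamma$-limit of $L_n\in\mathfrak{L}_0(T^*N)$, $\gamma$ would then be invariant under all of $\mathrm{DHam}_c$, which is absurd. For instance, translating the zero-section-type comparison $\Gamma_f$ shows that $\gamma(L,\Gamma_{tf})$ cannot stay constant. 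I expect the paper deals with emptiness separately, and I would only sketch this point.

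Negating Definition \ref{defn:gamCoi} at $z$ gives the following. For every $\varepsilon>0$ there is some $\eta<\varepsilon$ such that, for every $\delta>0$, there exists $\varphi_\delta$ supported in $B(z,\varepsilon)$ with $\varphi_\delta(\gamma\textnormal{-supp}(L))\cap B(z,\eta)=\emptyset$ and $\gamma(\varphi_\delta)\le\delta$. By Remark \ref{3remarks}(iii), $\gamma\textnormal{-supp}(\varphi_\delta(L))=\varphi_\delta(\gamma\textnormal{-supp}(L))$, so $\varphi_\delta(L)$ has $\gamma$-support disjoint from $B(z,\eta)$. Also $\gamma(L,\varphi_\delta(L))\le\gamma(\varphi_\delta)\le\delta$. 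To extend the $\gamma$-norm bound from $\mathfrak{L}_0$ to the completion, approximate $L$ by $L_n$ and pass to the limit, using that $\varphi_\delta$ acts isometrically. Hence $\varphi_{1/k}(L)\to L$ as $k\to\infty$. Lemma \ref{lem:gamSupLimInf} applied to the sequence $L_k:=\varphi_{1/k}(L)$, whose supports all avoid $B(z,\eta)$, then gives $\gamma\textnormal{-supp}(L)\cap B(z,\eta)=\emptyset$. This contradicts $z\in\gamma\textnormal{-supp}(L)$.

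The main point to check is the quantifier bookkeeping. The contradiction only needs a single $\eta$ for which $\delta$ can be taken arbitrarily small, and that is exactly what the negation provides for any fixed $\varepsilon$. Two technical steps need care. The first is extending $\gamma(L',\varphi(L'))\le\gamma(\varphi)$ to $L'\in\widehat{\mathfrak{L}_0}(T^*N)$; this follows from the triangle inequality and the isometric action. The second is the use of Remark \ref{3remarks}(iii) for Hamiltonian diffeomorphisms, which are CES with $a=1$. With these in place the argument is immediate from Lemma \ref{lem:gamSupLimInf}.
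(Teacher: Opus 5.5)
Your proposal is correct and follows the paper's route. You negate the definition at $z$ to obtain $\varphi_n$ with $\gamma(\varphi_n)\to 0$ that push the support off $B(z,\eta)$, transport supports via Remark \ref{3remarks}(iii), and conclude with Lemma \ref{lem:gamSupLimInf}; your extra details are sound, namely the bound $\gamma(L,\varphi(L))\le\gamma(\varphi)$ on the completion (left implicit in the paper's ``taking the limit'') and nonemptiness (which the paper gets separately from Proposition \ref{no empty}).
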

\begin{proof}
Let $z \in \gamma\textnormal{-supp}(L)$ and assume --by contradiction-- that $\gamma\textnormal{-supp}(L)$ is not $\gamma$-coisotropic at $z$. By definition, this means that for every $\varepsilon>0$ there exist $\eta \in (0, \varepsilon)$ and a sequence $(\varphi_n)_{n \ge 1}$ in $\mathrm{DHam}(T^*N)$, supported in $B(z,\varepsilon)$, such that
$$\gamma(\varphi_n) \to 0 \quad \text{and} \quad \varphi_n(\gamma\textnormal{-supp}(L)) \cap B(z,\eta) = \emptyset\, .$$
Recall that --see $(iii)$ of Remark \ref{3remarks}-- $\varphi_n(\gamma\textnormal{-supp}(L)) = \gamma\textnormal{-supp}(\varphi_n(L))$. Taking the limit for $n \to +\infty$ and applying Lemma \ref{lem:gamSupLimInf}, we then obtain:
\begin{equation} \label{contra}
\gamma\textnormal{-supp}(L) \cap B(z,\eta) = \emptyset\, .
\end{equation}
Since, by hypothesis, $z \in \gamma\textnormal{-supp}(L)$, we obtain the desired contradiction. 
\end{proof}
\indent In the following, we respectively indicate with 
$$\widehat{\mathfrak{L}_c}(T^*N) \quad \text{and} \quad \widehat{\mathscr{L}_c}(T^*N)$$ 
the elements in $\widehat{\mathfrak{L}_0}(T^*N)$ with compact $\gamma$-support and the elements in $\widehat{\mathscr{L}_0}(T^*N)$ with compact $c$-support. Given $L \in \widehat{\mathfrak{L}_0}(T^*N)$, we finally want to show that $\gamma\textnormal{-supp}(L) \ne \emptyset$. This fact is a straightforward consequence of the next proposition. 
\begin{prp} \label{no empty}
If $L_1 \in \widehat{\mathfrak{L}_c}(T^*N)$ and $L_2 \in \widehat{\mathfrak{L}_0}(T^*N)$, then: \\
$(i)$ $\gamma\textnormal{-supp}(L_1) \cap \gamma\textnormal{-supp}(L_2) \ne \emptyset$. \\
$(ii)$ $\gamma\textnormal{-supp}(L_1) \cap T^*_qN \ne \emptyset$ for every $q \in N$.
\end{prp}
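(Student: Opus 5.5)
The plan for both items is to argue by contradiction. If the relevant $\gamma$-supports are disjoint, a suitable compactly supported Hamiltonian diffeomorphism fixes one object in the completion but visibly changes a spectral invariant computed against the other. Throughout, I use that $c_\pm$ and $\gamma$ extend to the completions. By Theorem \ref{distanze} and the triangle-type inequalities used in its proof, $c_\pm(\cdot,\cdot)$ is $1$-Lipschitz in each entry for $c$. The extension therefore stays invariant under $\mathrm{DHam}_c(T^*N)$ (Proposition \ref{invariance phi}), and it still satisfies $c_\pm(T_a\tilde L_1,\tilde L_2)=c_\pm(\tilde L_1,\tilde L_2)-a$.

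\emph{Item (ii).} Suppose $\gamma\textnormal{-supp}(L_1)\cap T^*_qN=\emptyset$. Since $\gamma\textnormal{-supp}(L_1)$ is compact, there is a neighbourhood $W$ of $q$ with $\gamma\textnormal{-supp}(L_1)\cap T^*W=\emptyset$. Choose a bump $f\ge 0$ supported in $W$ with $\max f=1$, and set $H(q,p)=f(q)\chi(p)$, where $\chi$ is a cutoff equal to $1$ on a large ball. Then $H$ is compactly supported, and the time-$t$ map $\varphi_t$ sends the zero section to $\Gamma_{tf}$ with primitive $tf$. Since $\textnormal{supp}(\varphi_t)\subset T^*W$, Proposition \ref{serve dopo} gives $\varphi_t(L_1)=L_1$. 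The argument in the proof that $c\textnormal{-supp}=\gamma\textnormal{-supp}$ then gives $\varphi_t(\tilde L_1)=\tilde L_1$ at brane level: compare against a test graph $\Gamma_g$ disjoint from $T^*W$ to rule out a shift. Hence $c_+(\tilde L_1,\Gamma_{tf})=c_+(\varphi_t\tilde L_1,\varphi_t\mathcal O_N)=c_+(\tilde L_1,\mathcal O_N)$ is independent of $t$. On the other hand, Lemma \ref{triangle} together with the duality $c(1,-S_1)=-c(\mu,S_1)$, first for smooth $\tilde L_1$ and then by density, yields
\[
c_+(\tilde L_1,\Gamma_{tf})\ \ge\ t\max f - c_+(\tilde L_1,\mathcal O_N).
\]
This tends to $+\infty$ as $t\to\infty$, which is the contradiction.

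\emph{Item (i).} Suppose $K_1=\gamma\textnormal{-supp}(L_1)$ and $K_2=\gamma\textnormal{-supp}(L_2)$ are disjoint; $K_1$ is compact and $K_2$ is closed. Pick $\rho\in C^\infty_c$ with $\rho\equiv1$ on a neighbourhood $U$ of $K_1$ and $\rho\equiv0$ near $K_2$, and let $\varphi_s$ be the flow of $a\rho$ with $a\neq0$. The support of $\varphi_s$ misses $K_2$, so as above $\varphi_s(\tilde L_2)=\tilde L_2$ as branes. The support of $\varphi_s$ also misses $U$, since $X_{a\rho}=0$ on $U$; by Proposition \ref{serve dopo} we get $\varphi_s(L_1)=L_1$, hence $\varphi_s(\tilde L_1)=T_{b(s)}\tilde L_1$ for some $b(s)$. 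The goal is $b(s)=sa$. Then invariance gives $c_+(\tilde L_1,\tilde L_2)=c_+(T_{sa}\tilde L_1,\tilde L_2)=c_+(\tilde L_1,\tilde L_2)-sa$, a contradiction.

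\emph{Main obstacle.} The hard step is identifying the shift $b(s)$ in item (i). For a smooth brane lying inside $U$, the action formula for $f_{\varphi(L)}$ gives a shift of exactly $\int_0^s a\,dt=sa$. For a completed $\tilde L_1$, however, approximating smooth branes need not lie in $U$. My first attempt is to detect the shift with a test Lagrangian instead. I would decompose $a\rho$ as the sum of a Hamiltonian equal to the constant $a$ on a huge compact region containing everything relevant and a Hamiltonian supported away from $K_1$. The first piece shifts every brane, including a far-away test graph, by the same computable amount, so its effect on $c_\pm$ is explicit. If this bookkeeping fails, I would instead try to reduce (i) to the fiber-pushing argument of (ii), using $L_2$ in place of the zero section.
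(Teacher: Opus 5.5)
Item (i) has a genuine gap, at exactly the step you flag. If the shift $b$ is left undetermined, invariance of $c_+$ only gives $c_+(\tilde L_1,\tilde L_2)=c_+(\tilde L_1,\tilde L_2)-b$, i.e.\ $b=0$, which contradicts nothing. The contradiction rests entirely on knowing $\varphi(\tilde L_1)=T_a\tilde L_1$.

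The paper gets this from Lemma~\ref{lem:hamTraCom}, stated just before the proof: if $H\equiv a$ on a neighbourhood of the compact set $\gamma\textnormal{-supp}(L_1)$, then $\varphi_H(\tilde L_1)=T_a\tilde L_1$. Once you cite it, your (i) is the paper's proof.

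Your substitute does not yet prove that lemma. The claim ``the first piece shifts every brane by the same amount'' holds only for smooth branes contained in $\{\chi=1\}$. $\tilde L_1$ is a limit whose approximants need not lie there, so the obstacle you found for $U$ just reappears for the huge region. To make the decomposition into a proof you need three further steps.
\begin{itemize}
\item[(1)] If $\chi\equiv1$ near $\mathrm{supp}\,\rho$, then $\{\chi,\rho\}=0$, so $\varphi_{a\rho}=\varphi_{a\chi}\circ\varphi_{a(\rho-\chi)}$. This is needed because flows do not add in general.
\item[(2)] $a(\rho-\chi)$ has compact support disjoint from $\gamma\textnormal{-supp}(L_1)$, so its flow fixes the brane $\tilde L_1$.
\item[(3)] An approximation step. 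By (1)--(2) applied to $a(\chi_{r'}-\chi_r)$, the brane $\varphi_{a\chi_r}(\tilde L_1)$ does not depend on $r$. A smooth approximant $\tilde L^k$ is compact, hence lies in $\{\chi_r=1\}$ for $r$ large, so $\varphi_{a\chi_r}(\tilde L^k)=T_a\tilde L^k$. Since $\varphi_{a\chi_r}$ and $T_a$ are $c$-isometries, $c(\varphi_{a\chi_r}(\tilde L_1),T_a\tilde L_1)\le 2\,c(\tilde L_1,\tilde L^k)$, which can be made arbitrarily small.
\end{itemize}
Your fallback (fiber pushing with $L_2$ in place of $\mathcal O_N$) does not work as stated. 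In (i) you only know $\gamma\textnormal{-supp}(L_1)\cap\gamma\textnormal{-supp}(L_2)=\emptyset$, so there is no region $T^*W$ free of $\gamma\textnormal{-supp}(L_1)$ in which to push.

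Brane-level invariance must be read off from the support of the Hamiltonian, not of the isotopy. For example, $\varphi_{a\chi}$ moves only points of a shell far from $\mathcal O_N$, yet it sends the brane $\mathcal O_N$ to $T_a\mathcal O_N$.
\begin{itemize}
\item Your claim $\varphi_s(\tilde L_2)=\tilde L_2$ is true, because $\mathrm{supp}\,\rho$ is compact and misses $\gamma\textnormal{-supp}(L_2)$. The justification has to go ball by ball, though: fragment, then run the argument of the proposition $c\textnormal{-supp}=\gamma\textnormal{-supp}$ in each small ball. A single test graph disjoint from $\mathrm{supp}\,\rho$ cannot exist, since by (i) itself every graph meets $\gamma\textnormal{-supp}(L_1)$.
\item Similarly, in (ii) no graph is disjoint from $T^*W$. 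Take $\Gamma_g$ disjoint from the compact set $\mathrm{supp} f\times\mathrm{supp}\,\chi$ instead.
\end{itemize}

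With these repairs your (ii) is correct, and it takes a different route from the paper. The paper deduces (ii) from (i): if $f$ has all its critical points over $B(q,\eta)$, then $\Gamma_{tf}$ misses $\gamma\textnormal{-supp}(L_1)$ for $t$ large. You instead show that $c_+(\tilde L_1,\Gamma_{tf})$ is both constant in $t$ and at least $t\max f-C$. This makes (ii) independent of (i) and of Lemma~\ref{lem:hamTraCom}.

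There are two slips in (ii).
\begin{itemize}
\item The constant is $C=c(\mu,S_1)=c_+(\mathcal O_N,\tilde L_1)$, not $c_+(\tilde L_1,\mathcal O_N)$. Your version fails for $\tilde L_1=T_1\mathcal O_N$.
\item With $\omega=-d(p\,dq)$, the flow of $f(q)$ may produce $\Gamma_{-tf}$ rather than $\Gamma_{tf}$, so choose the sign of $H$ accordingly.
\end{itemize}
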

\noindent The proof of Proposition \ref{no empty} will make use of the following lemma, we refer to \cite[Lemma 6.12]{vit22} for a detailed proof. 
\begin{lem}\label{lem:hamTraCom}
Let $L \in \widehat{\mathfrak{L}_c}(T^*N)$ underlying $\tilde{L} \in \widehat{\mathscr{L}_c}(T^*N)$. Let $\varphi \in \mathrm{DHam}(T^*N)$ be generated by a Hamiltonian $H \equiv a$ on a neighborhood of $\gamma\textnormal{-supp}(L)$. Then $\varphi(\tilde{L}) = T_a\tilde{L}$ (and $\varphi(L) = L$).
\end{lem}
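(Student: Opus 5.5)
We need to prove Lemma \ref{lem:hamTraCom}: if $H\equiv a$ on a neighborhood $U$ of $\gamma\textnormal{-supp}(L)$, then $\varphi(\tilde L)=T_a\tilde L$ and $\varphi(L)=L$.

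The plan is to reduce to the case $a=0$, treat that case by fragmentation, and then recover the general case from the constant part of $H$. Throughout, $K:=\gamma\textnormal{-supp}(L)$ is compact.

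\textbf{Step 1: reduction.} Fix a cutoff $\chi$, compactly supported, with $\chi\equiv 1$ on a smaller neighborhood $U'\Subset U$ of $K$, and split
$$H=\big(H-a\chi\big)+a\chi .$$
The function $a\chi$ is constant equal to $a$ near $K$. Since $\{H,a\chi\}=0$ on $U'$ but not everywhere, I will not rely on commutation. Instead I will write the flow of $H$ as the composition of the flow of $a\chi$ with the flow of the transformed Hamiltonian
$$K_t=(H-a\chi)\circ\varphi^t_{a\chi}.$$
Because $\varphi^t_{a\chi}$ is the identity on $U'$, $K_t$ still vanishes on a neighborhood of $K$. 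So it suffices to prove two things: (A) a Hamiltonian vanishing near $K$ fixes $\tilde L$; (B) the flow of $a\chi$ sends $\tilde L$ to $T_a\tilde L$.

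\textbf{Step 2: part (A).} Suppose $G$ vanishes on a neighborhood of $K$, and after cutting off far away (as in Remark \ref{nota supp com}) is compactly supported. Then the support of the isotopy avoids $K$. Proposition \ref{serve dopo} gives $\psi(L)=L$ directly; alternatively, cover $\mathrm{supp}\,G$ by small balls missing $K$ and fragment with Lemma \ref{Banyaga}. For the brane I will use the $c$-support version: since $c\textnormal{-supp}(\tilde L)=\gamma\textnormal{-supp}(L)$ by the proposition above, each fragment is supported in a ball $B$ with $c(\tilde L,\psi_j(\tilde L))=0$. Hence $\psi_j(\tilde L)=\tilde L$ in the completion, and composing gives $\psi(\tilde L)=\tilde L$.

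\textbf{Step 3: part (B).} Approximate $\tilde L$ by smooth branes $\tilde L_n\in\mathscr L_0$, choosing them so that $L_n\subset U'$ for large $n$. For a smooth brane contained in $\{\chi=1\}$, the flow of $a\chi$ is the identity on the region where the brane lies. The primitive formula
$$f_{\varphi(L)}(z)=f_L(\varphi^{-1}z)+\int_0^1[\gamma^*\lambda+a\chi]\,dt$$
then gives the constant path with $\gamma^*\lambda=0$, so $\varphi_{a\chi}(\tilde L_n)=T_a\tilde L_n$. Since $\varphi_{a\chi}$ is a $c$-isometry and $T_a$ is $1$-Lipschitz for $c$, passing to the limit yields $\varphi_{a\chi}(\tilde L)=T_a\tilde L$.

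\textbf{Main obstacle.} The key difficulty is producing approximants $L_n$ that lie in a neighborhood of $K$: elements of the completion need not be limits of Lagrangians localized near their $\gamma$-support. I would first try to avoid this issue altogether by using a graph test, as in the proof of the proposition comparing $c$- and $\gamma$-supports. Take $\Gamma_f$ lying outside $\mathrm{supp}\,\chi$; this is possible for $\Gamma_f$ with $|df|$ large, or in any case after applying Hamiltonian invariance. Part (A) together with the isometry property gives $\varphi_{a\chi}(L)=L$, so $\varphi_{a\chi}(\tilde L)=T_b\tilde L$ for some constant $b$. I would then compute
$$c_+\big(\varphi_{a\chi}(\Gamma_f),\varphi_{a\chi}(\tilde L)\big)=c_+(\Gamma_f,\tilde L)+b$$
and compare it with the spectral invariants to identify $b=a$. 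The step I expect to be genuinely hard is making this identification $b=a$ rigorous for an abstract limit object: it requires continuity of $c_\pm$ in the completion together with a careful choice of test branes.
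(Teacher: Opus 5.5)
Your Steps 1 and 2 are sound. The factorization $\varphi_H=\varphi_{a\chi}\circ\psi$, with $\psi$ generated by $(H-a\chi)\circ\varphi^t_{a\chi}$, is correct. For a compactly supported Hamiltonian vanishing near $K=\gamma\textnormal{-supp}(L)$, fragmentation into balls avoiding $K$, together with $c\textnormal{-supp}(\tilde L)=\gamma\textnormal{-supp}(L)$, does give $\psi(\tilde L)=\tilde L$.

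The gap is in part (B), and neither of your two routes closes it. The first route needs smooth approximants inside $\{\chi=1\}$, and you have no way to produce them. The fallback is not merely hard but impossible. If some $\Gamma_f$ avoided $\textnormal{supp}\,\chi$, then $\varphi_{a\chi}(\Gamma_f)=\Gamma_f$ as a brane. Your computation combined with Hamiltonian invariance would then give $c_+(\Gamma_f,\tilde L)+b=c_+(\Gamma_f,\tilde L)$, i.e.\ $b=0$, not $b=a$. So for $a\neq 0$ no such test brane can exist; this is exactly the argument by which Proposition \ref{no empty}$(i)$ is derived from the lemma. Concretely, if $L=\mathcal{O}_N$, every $\Gamma_f$ meets $K$ at the critical points of $f$. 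Conjugating by a Hamiltonian diffeomorphism does not help, since it moves $K$ as well.

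Your assertion that (A) yields $\varphi_{a\chi}(L)=L$ is also unjustified, because $a\chi$ does not vanish near $K$. Its isotopy fixes a neighbourhood of $K$ pointwise, but it cannot be fragmented into pieces supported in balls away from $K$. The compactly supported action function in the primitive formula is determined by the time-one map. For $\varphi_{a\chi}$ it equals $a$ near $K$, whereas for any product of such fragments it vanishes there. So the proof of Proposition \ref{serve dopo} does not apply to $\varphi_{a\chi}$.

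The missing idea is to enlarge the cutoff rather than localize the approximants. Your Step 1 works for every compactly supported $\chi'$ with $\chi'\equiv 1$ near $K$. Part (A) then gives $\varphi_H(\tilde L)=\varphi_{a\chi'}(\tilde L)$ whatever $\chi'$ is (for $H$ compactly supported, as in the application). Now take smooth branes $\tilde L_n\to\tilde L$ and choose $\chi_n\equiv 1$ on a neighbourhood of the compact set $K\cup L_n$. Points of $L_n$ are then stationary under $\varphi^t_{a\chi_n}$, so the primitive formula gives $\varphi_{a\chi_n}(\tilde L_n)=T_a\tilde L_n$. Since $\varphi_{a\chi_n}$ is a $c$-isometry,
\[
c\bigl(\varphi_H(\tilde L),T_a\tilde L_n\bigr)=c\bigl(\varphi_{a\chi_n}(\tilde L),\varphi_{a\chi_n}(\tilde L_n)\bigr)=c(\tilde L,\tilde L_n)\longrightarrow 0 .
\]
Since $T_a$ is also a $c$-isometry, $T_a\tilde L_n\to T_a\tilde L$. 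Hence $\varphi_H(\tilde L)=T_a\tilde L$, and $\varphi_H(L)=L$ follows by projection. This uses only your part (A) and the isometry property, with no test branes and no control on where the approximants lie.
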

\noindent \textit{Proof of Proposition \ref{no empty}}.  $(i)$ Assume by contradiction that the two $\gamma$-supports are disjoint. Since $\gamma\textnormal{-supp}(L_1)$ is compact and $\gamma\textnormal{-supp}(L_2)$ is closed, there exists $\varphi \in \mathrm{DHam}(T^*M)$ generated by a Hamiltonian $H$ such that $H \equiv a \ne 0$ on a neighborhood of $\gamma\textnormal{-supp}(L_1)$ and $H \equiv 0$ on a neighborhood of $\gamma\textnormal{-supp}(L_2)$. Let $\tilde{L}_1 \in \widehat{\mathscr{L}_c}(T^*N)$ and $\tilde{L}_2 \in \widehat{\mathscr{L}_0}(T^*N)$ corresponding to $L_1$ and $L_2$ respectively. On the one hand, applying Proposition \ref{invariance phi}, we have that
$$c_+(\varphi(\tilde{L}_1),\varphi(\tilde{L}_2)) = c_+(\tilde{L}_1,\tilde{L}_2)\, .$$
On the other hand, by Lemma \ref{lem:hamTraCom} above:
$$c_+(\varphi(\tilde{L}_1),\varphi(\tilde{L}_2)) = c_+(T_a \tilde{L}_1,\tilde{L}_2) = c_+(\tilde{L}_1,\tilde{L}_2)-a \, .$$
Consequently $a = 0$, obtaining the desired contradiction. \\
$(ii)$ We again argue by contradiction. If $(ii)$ does not holds, we can find a small ball $B(q,\eta)\subset N$ such that 
$T^*(B(q,\eta)) \cap \gamma\textnormal{-supp}(L_1) = \emptyset.$ Take now a smooth function $f$ such that all critical points of $f$ are in $B(q,\eta)$: it is easy to construct such an $f$ up to changing the Riemannian metric on $N$. Then for any bounded set $W$ contained in the complement of $T^*
(B(q,\eta))$, we have that $\Gamma_{tf} \cap W = \emptyset$ for $t$ large enough, so that $\Gamma_{t f} \cap L_1 = \emptyset$, which contradicts $(i)$.\hfill$\Box$
\begin{es} \textnormal{Let $(f_n)_{n \ge 1}$ be a sequence in $C^{\infty}(N,\mathbb{R})$. If $\lim_{n \to \infty} f_n = f \in C^0(N,\mathbb{R})$ in the uniform norm, then $(\Gamma_{f_n})_{n \ge 1}$ is a Cauchy sequence in $\mathscr{L}_0(T^*N)$, hence defines an element $\Gamma_f \in \widehat{\mathscr{L}_0}(T^*N)$. In such a case, by \cite[Proposition 9]{AGHIV23}, we have that 
$$c\textnormal{-supp}(\Gamma_f) = \partial f\, ,$$
where $\partial f$ indicates the subdifferential for a continuous function introduced by N. Vichery in \cite[Definition 3.4]{vic13}. As a consequence of Proposition \ref{no empty}, notice that 
$$\partial f(q) = \partial f \cap T^*_q N \ne \emptyset \qquad \forall q \in N\, .$$
We also remind that if $\partial_Cf(q)$ is the Clarke differential of $f$ at $q$, we have 
$$\partial f(q) \subset \partial_C f(q) \qquad \forall q \in N\, ,$$
and the inclusion can be strict, see \cite[Theorem 3.14 and Example 3.16]{vic13} respectively. 
}\end{es}
As a consequence of $(ii)$ in Proposition \ref{no empty}, if $L \in \widehat{\mathfrak{L}_c}(T^*N)$ then $\gamma\textnormal{-supp}(L)$ may be big. In particular, that result implies that the Hausdorff dimension is greater or equal than $n$. Therefore a few natural questions arise, in particular: \\
~\newline
(A) How big can $\gamma\textnormal{-supp}(L)$ be? \\
(B) What can we say about $L$ if the Hausdorff dimension of the $\gamma\textnormal{-supp}(L)$ is $n$? \\
~\newline
Regarding question (A), we remind the reader of the existence of the so-called Peano Lagrangians $L \in \widehat{\mathfrak{L}_c}(T^*N)$ such that (in a chart)
$$\gamma\textnormal{-supp}(L) = \mathcal{O}_N \cup [0,1]^n \times [0,1]^r, \qquad 0\le r \le n\, .$$
See \cite[Theorem 7.12]{vit22}.\\
A partial answer to question (B) is given by the next result, see \cite{AGIV24}.
\begin{thm}[\cite{AGIV24}]
Let $L_{\infty} \in \widehat{\mathfrak{L}_c}(T^*N)$. We assume that $L = \gamma\textnormal{-supp}(L_{\infty})$ is a compact exact Lagrangian submanifold of $T^*N$. Then $L_{\infty} = L$ in $\mathfrak{L}_c(T^*N)$.
\end{thm}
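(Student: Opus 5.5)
Write $L_\infty$ for the limit element and $L$ for its $\gamma$-support, a compact exact Lagrangian. The idea is to show that $L_\infty$ and the honest Lagrangian $L$, seen as an element of $\widehat{\mathfrak{L}_0}(T^*N)$, are at $\gamma$-distance zero. The proof has three steps.

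\textbf{Step 1: $L$ lies in $\mathfrak{L}_0(T^*N)$.} I would first make sure that $L$ is Hamiltonian isotopic to the zero section, so that the comparison makes sense. Pick a sequence $L_k\in\mathfrak{L}_0(T^*N)$ with $L_k\to L_\infty$. By Lemma \ref{lem:gamSupLimInf}, the $L_k$ accumulate on $L$. Since $L$ is a smooth exact Lagrangian, I would try to argue that for $k$ large a suitable cut-off of $L_k$ lies in a Weinstein neighbourhood $U\simeq D_rT^*L$ of $L$. If this is not available, I would state it as an assumption implicit in the theorem ($L\in\mathfrak{L}_0$, which is how ``$L_\infty=L$ in $\mathfrak{L}_c$'' is to be read).

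\textbf{Step 2: reduce to the zero section in a Weinstein chart.} Choose $\varphi\in\mathrm{DHam}_c(T^*N)$ with $\varphi(L)=\mathcal{O}_N$. By Remark \ref{3remarks}(iii) and the fact that $\mathrm{DHam}_c$ acts by isometries, this reduces the statement to the case $\gamma\textnormal{-supp}(L_\infty)=\mathcal{O}_N$. The claim is then $\gamma(L_\infty,\mathcal{O}_N)=0$.

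\textbf{Step 3: the fibrewise rescaling (main step).} Consider the conformal maps $\psi_s(q,p)=(q,sp)$ for $s\in(0,1]$. Each $\psi_s$ is conformally exact symplectic with ratio $s$ and isotopic to the identity. By Proposition \ref{conformal invariance phi}, extended to the completion by continuity,
$$\gamma(\psi_s(L_1),\psi_s(L_2))=s\,\gamma(L_1,L_2).$$
Also $\gamma\textnormal{-supp}(\psi_s(L_\infty))=\psi_s(\mathcal{O}_N)=\mathcal{O}_N$.

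The key point to prove is that $\psi_s(L_\infty)=L_\infty$ for every $s$, i.e.\ that $L_\infty$ is invariant under the fibrewise dilation. I would argue by connecting $\psi_s$ to the identity through a path generated by the Liouville field $p\,\partial_p$. Cut this field off to a Hamiltonian-like deformation supported away from $\mathcal{O}_N$. Near $\mathcal{O}_N$, the generator $p\,\partial_p$ is the Hamiltonian flow of $H=$``$\langle p,\cdot\rangle$''-type functions; globally it is not Hamiltonian but is CES. The cleaner route is the following:
\begin{itemize}
\item Since $\psi_s$ fixes $\mathcal{O}_N$ pointwise, it should act on $L_\infty$ as a map that is the identity on a neighbourhood of the support.
\item This is the analogue of Lemma \ref{lem:hamTraCom}, with $H\equiv a$ replaced by a CES map that is the identity near the support.
\end{itemize}
Proving this analogue is the main obstacle. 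I would try to prove it via the Banyaga fragmentation argument of Proposition \ref{serve dopo}, applied to $\psi_s\circ\psi_{s'}^{-1}$ for $s'$ close to $s$. The invariance then follows from the fact that, near any point off $\mathcal{O}_N$, the map $\psi_s\circ\psi_{s'}^{-1}$ is locally Hamiltonian and does not see $L_\infty$.

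\textbf{Conclusion.} Once $\psi_s(L_\infty)=L_\infty$ is established, we also have $\psi_s(\mathcal{O}_N)=\mathcal{O}_N$, so
$$\gamma(L_\infty,\mathcal{O}_N)=\gamma(\psi_s(L_\infty),\psi_s(\mathcal{O}_N))=s\,\gamma(L_\infty,\mathcal{O}_N).$$
This holds for all $s\in(0,1)$, which forces $\gamma(L_\infty,\mathcal{O}_N)=0$. Hence $L_\infty=\mathcal{O}_N$, which is $L$ after undoing $\varphi$.
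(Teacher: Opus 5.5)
\textbf{There is a genuine gap, and it is in Step 3.} After Step 2, the invariance $\psi_s(L_\infty)=L_\infty$ that you call the key point is \emph{equivalent} to the theorem. For $0<s<1$, $\psi_s$ is a contraction of ratio $s$ of the complete space $\widehat{\mathfrak{L}_0}(T^*N)$, and $\psi_s(\mathcal{O}_N)=\mathcal{O}_N$. Uniqueness of the Banach fixed point therefore gives $\psi_s(L_\infty)=L_\infty$ if and only if $L_\infty=\mathcal{O}_N$. Your ``Conclusion'' is just this uniqueness, so the whole difficulty sits in the unproved invariance.

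The justification you offer for it fails on two counts:
\begin{itemize}
\item $\psi_s$ fixes $\mathcal{O}_N$ pointwise but moves every point with $p\neq 0$, so the support of the isotopy $(\psi_t)$ is all of $T^*N$. Lemma \ref{lem:hamTraCom} and Proposition \ref{serve dopo} need $H$ constant, resp.\ the map trivial, on a \emph{neighbourhood} of $\gamma\textnormal{-supp}(L_\infty)$. Being the identity on the support itself is not enough.
\item $\psi_s\circ\psi_{s'}^{-1}=\psi_{s/s'}$ satisfies $\psi_{s/s'}^*\omega=(s/s')\,\omega$. For $s\neq s'$ it is not symplectic on any open set, however close $s'$ is to $s$. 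So it is nowhere locally Hamiltonian, and Banyaga's fragmentation (Lemma \ref{Banyaga}) cannot be applied to it.
\end{itemize}

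What you actually need is a uniqueness-from-support statement: an element whose $\gamma$-support is the smooth exact Lagrangian $L$ must equal $L$. This is a special case of question (C). The example of Figure \ref{fig:limLag} shows that in general the $\gamma$-support does not determine the element, so the smoothness of $L$ must be used in an essential, local way. Your argument uses it only through $\psi_s(\mathcal{O}_N)=\mathcal{O}_N$.

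The paper gives no proof of this statement and refers to \cite{AGIV24}. There the rigidity comes from microlocal sheaf theory: elements of the completion are represented by sheaves, the $\gamma$-support is their reduced microsupport \cite{AGHIV23}, and a sheaf microsupported on a smooth compact exact Lagrangian is then identified with the sheaf quantization of $L$. Nothing in your proposal replaces this step.

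Step 1 is also unsupported. Lemma \ref{lem:gamSupLimInf} only gives $L\subset\liminf \gamma\textnormal{-supp}(L_k)$, i.e.\ every point of $L$ is approached by points of the $L_k$. It does not confine the $L_k$ to a Weinstein neighbourhood of $L$, because $\gamma$-convergence allows pieces of $L_k$ to stay far from $L$. Without Step 1 you cannot reduce to $\mathcal{O}_N$ in Step 2.
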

\noindent Finally, in order to understand the $\gamma$-support of $L \in \widehat{\mathfrak{L}_c}(T^*N)$, the next example is particularly useful and addresses another (open) question: \\
~\newline
(C) If $V = \gamma\textnormal{-supp}(L)$, how many $L' \in \widehat{\mathfrak{L}_0}(T^*N)$ have the same $\gamma$-support? 
We conjecture that the number is finite if $\dim_H(V)\leq n$ (it is in fact zero if $\dim_H(V)<n$, see \cite[Prop 9.9]{vit22}). 
\begin{es} {\textnormal{Let $V$, $(L_n)_{n \ge 1}$ and $(\bar{L}_n)_{n \ge 1}$ as in Figure \ref{fig:limLag}. If we denote $L_{\infty}$ and $\bar{L}_{\infty}$ the $\gamma$-limit of $(L_n)_{n \ge 1}$ and $(\bar{L}_n)_{n \ge 1}$ respectively, it clearly holds that
$$\gamma\textnormal{-supp}(L_{\infty}) = V = \gamma\textnormal{-supp}(\bar{L}_{\infty})\, ,$$
where $\gamma(L_n,\bar{L}_n)$ is uniformly (in $n$) bounded away from $0$ (by the area of the ``loop'').}} 
\end{es}
\begin{figure}[H]
  \centering
    %% Creator: Inkscape 1.2.2 (b0a8486541, 2022-12-01), www.inkscape.org
%% PDF/EPS/PS + LaTeX output extension by Johan Engelen, 2010
%% Accompanies image file 'limitLagrangians.pdf' (pdf, eps, ps)
%%
%% To include the image in your LaTeX document, write
%%   \input{<filename>.pdf_tex}
%%  instead of
%%   \includegraphics{<filename>.pdf}
%% To scale the image, write
%%   \def\svgwidth{<desired width>}
%%   \input{<filename>.pdf_tex}
%%  instead of
%%   \includegraphics[width=<desired width>]{<filename>.pdf}
%%
%% Images with a different path to the parent latex file can
%% be accessed with the `import' package (which may need to be
%% installed) using
%%   \usepackage{import}
%% in the preamble, and then including the image with
%%   \import{<path to file>}{<filename>.pdf_tex}
%% Alternatively, one can specify
%%   \graphicspath{{<path to file>/}}
%% 
%% For more information, please see info/svg-inkscape on CTAN:
%%   http://tug.ctan.org/tex-archive/info/svg-inkscape
%%
\begingroup%
  \makeatletter%
  \providecommand\color[2][]{%
    \errmessage{(Inkscape) Color is used for the text in Inkscape, but the package 'color.sty' is not loaded}%
    \renewcommand\color[2][]{}%
  }%
  \providecommand\transparent[1]{%
    \errmessage{(Inkscape) Transparency is used (non-zero) for the text in Inkscape, but the package 'transparent.sty' is not loaded}%
    \renewcommand\transparent[1]{}%
  }%
  \providecommand\rotatebox[2]{#2}%
  \newcommand*\fsize{\dimexpr\f@size pt\relax}%
  \newcommand*\lineheight[1]{\fontsize{\fsize}{#1\fsize}\selectfont}%
  \ifx\svgwidth\undefined%
    \setlength{\unitlength}{155.905501bp}%
    \ifx\svgscale\undefined%
      \relax%
    \else%
      \setlength{\unitlength}{\unitlength * \real{\svgscale}}%
    \fi%
  \else%
    \setlength{\unitlength}{\svgwidth}%
  \fi%
  \global\let\svgwidth\undefined%
  \global\let\svgscale\undefined%
  \makeatother%
  \begin{picture}(1,1.18181826)%
    \lineheight{1}%
    \setlength\tabcolsep{0pt}%
    \put(0,0){\includegraphics[width=\unitlength,page=1]{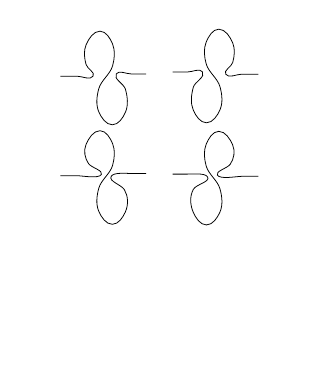}}%
    \put(0.0668262,1.0088135){\color[rgb]{0,0,0}\makebox(0,0)[lt]{\lineheight{1.25}\smash{\begin{tabular}[t]{l}$L_n$\end{tabular}}}}%
    \put(0.05700369,0.68220419){\color[rgb]{0,0,0}\makebox(0,0)[lt]{\lineheight{1.25}\smash{\begin{tabular}[t]{l}$L_m$\end{tabular}}}}%
    \put(0.76290997,1.00910925){\color[rgb]{0,0,0}\makebox(0,0)[lt]{\lineheight{1.25}\smash{\begin{tabular}[t]{l}$\bar{L}_n$\end{tabular}}}}%
    \put(0.77081338,0.69766892){\color[rgb]{0,0,0}\makebox(0,0)[lt]{\lineheight{1.25}\smash{\begin{tabular}[t]{l}$\bar{L}_m$\end{tabular}}}}%
    \put(0.12718651,0.04356787){\color[rgb]{0,0,0}\makebox(0,0)[lt]{\lineheight{1.25}\smash{\begin{tabular}[t]{l}$\gamma\textnormal{-supp}(L_{\infty})=V = \gamma\textnormal{-supp}(\bar{L}_{\infty})$\end{tabular}}}}%
    \put(0,0){\includegraphics[width=\unitlength,page=2]{limitLagrangians.pdf}}%
    \put(0.60718783,0.2174612){\color[rgb]{0,0,0}\makebox(0,0)[lt]{\lineheight{1.25}\smash{\begin{tabular}[t]{l}$V$\end{tabular}}}}%
  \end{picture}%
\endgroup%

  \caption{Convergence of two sequences of Lagrangians to different elements in the completion with same $\gamma$-support.}
  \label{fig:limLag}
\end{figure}

\section{Higher dimensional Birkhoff attractors}\label{sezione 5}
\begin{center}

\begin{minipage}{10 cm}
\textit{Contents of Section \ref{sezione 5}}. The present section is based on the paper \cite{arnHumVit24} by M.-C. Arnaud, V. Humilière and C. Viterbo. We start by briefly recalling the definition of Birkhoff attractor for a dissipative map of the cylinder. We subsequently apply the theory explained in Section \ref{sezione 4} to extend to higher dimensions the notion of Birkhoff attractor for a conformally exact symplectic diffeomorphism. After giving its definition, we prove that this new notion coincides with the classical Birkhoff attractor (on $T^*\mathbb{S}$) and we discuss some recent questions about the study of this new dynamical player.
\end{minipage}
\end{center}
\subsection{Classical Birkhoff attractors}
Let $\psi$ be a conformally exact symplectic (CES) diffeomorphism on $T^*\mathbb{S} = \mathbb{S} \times \mathbb{R}$ with conformal ratio $a \in (0,1)$:
$$\psi^*pdq - a pdq = df\, .$$
Moreover, suppose that $\psi$ is a diffeomorphism of $\mathbb{S} \times [-1,1]$ into its image
\begin{equation} \label{diss diffeo}
\psi(\mathbb{S} \times [-1,1]) \subset \mathbb{S} \times ]-1,1[\, ,
\end{equation}
which is homotopic to the identity. In what follows, we will briefly call $\psi$ with these properties a dissipative map. As a consequence of (\ref{diss diffeo})we may set: 
\begin{defn} \label{Conley}
The attractor (\`a la Conley) of $\psi$ is the set
\[
B_0 = B_0(\psi):= \bigcap_{n\in\mathbb{N}}\psi^n(\mathbb{S} \times [-1,1])\, .
\]
\end{defn} 
\noindent Observe that $B_0$ is a compact, non-empty, connected, $\psi$-invariant set. Moreover, it separates  $\mathbb{S} \times [-1,1]$, i.e., its complementary set is the union of two open, disjoint, invariant sets $U_+$ and $U_-$. However, the attractor $B_0$ does not have to be ``minimal'' in the sense that it could be a priori further decomposed into smaller invariant pieces. This fundamental observation --which will be clarified in Proposition \ref{Birk-attr-minimal}--  justifies the following definition, introduced by G.D. Birkhoff in \cite{bir32}.
\begin{defn}
 Let $\psi: \mathbb{S} \times [-1 , 1] \to \mathbb{S} \times ]-1,1[$ be a dissipative map and let $B_0$ be its attractor, so that $\mathbb{S} \times [-1,1] \setminus B_0 = U_-\sqcup U_+$. The Birkhoff attractor of $\psi$ is then
 \[
 B = B(\psi):= \mathrm{cl}(U_-)\cap \mathrm{cl}(U_+)\, .
 \]
\end{defn}
One should be careful about the abuse of language: $B$ is not necessarily an attractor because it may not be true that $\lim_{n \to +\infty}(\psi^n(q,p),B)=0$ for all $(q,p) \in T^*\mathbb S$. \\
\noindent It is clear that since $U_\pm$ are both invariant, so is $B$. Moreover $B$ is connected and separates the annulus, in the sense that $\mathbb{S} \times [-1 , 1] \setminus B$ is the disjoint union of two open invariant sets containing $\sphere\times \{\pm 1\}$ respectively. Denote by $\mathcal{X}$ the set of compact, connected, invariant subsets of $\mathbb{S} \times [-1 , 1]$ that separate the annulus. For the proof of the next result and for further details, we refer to \cite{lec88}.
\begin{prp}
\label{Birk-attr-minimal}
The Birkhoff attractor $B \in \mathcal{X}$ and it is the smallest element of $\mathcal{X}$ with respect to the inclusion.
\end{prp}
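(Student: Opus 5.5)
The plan is to prove three things in turn: $B$ is compact and invariant, $B$ is connected and separates the annulus (so $B\in\mathcal X$), and every $X\in\mathcal X$ contains $B$.

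\emph{Membership in $\mathcal X$.} Set $A=\mathbb{S}\times[-1,1]$. Compactness follows because $B$ is an intersection of closed subsets of the compact annulus. Invariance follows because $U_\pm$ are invariant open sets and $\psi$ is a homeomorphism onto its image, so $\psi(\mathrm{cl}\,U_\pm)\subset \mathrm{cl}\,U_\pm$. For the reverse inclusion I would use that $B\subset B_0=\psi(B_0)$. Next I would show that $B$ separates: I set $V_+$ to be the connected component of $A\setminus B$ containing $\mathbb{S}\times\{1\}$. I check that $V_+\supset U_+$ and that $V_+$ is disjoint from $\mathrm{cl}\,U_-$. Indeed, a point of $\mathrm{cl}\,U_-\setminus B$ lies outside $\mathrm{cl}\,U_+$. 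A path in $A\setminus B$ from $U_+$ to $U_-$ would have to leave $\mathrm{cl}\,U_+$ at a boundary point of $U_+$; that point lies in $B_0$, and, being in the closure of the other side, it would lie in $B$. Connectedness of $B$ is the standard planar-topology fact: in the annulus (equivalently, in the sphere after collapsing the two boundary circles to points $\pm\infty$), the common boundary of two disjoint connected open sets whose union is dense is connected. This is a Janiszewski/unicoherence argument, and I would quote it.

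\emph{Minimality.} Let $X\in\mathcal X$ and write $A\setminus X=W_+\sqcup W_-$ with $W_\pm$ open, invariant, and containing $\mathbb{S}\times\{\pm1\}$. The key step is to show $W_\pm\subset U_\pm$ up to the attractor. Since $W_+$ is invariant and contains $\mathbb{S}\times\{1\}$, and $\psi^n(A)$ shrinks to $B_0$, the points of $W_+\setminus B_0$ are connected to the top boundary inside $A\setminus B_0$. I would argue this by considering the component of $W_+$ containing the top circle; that component is forward invariant. Hence $W_+\setminus B_0\subset U_+$, and similarly $W_-\setminus B_0\subset U_-$. Then any $x\in B$ that is not in $X$ lies in, say, $W_+$, which is open. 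A neighbourhood of $x$ inside $W_+$ meets $U_-$ because $x\in\mathrm{cl}\,U_-$. But $W_+\cap U_-$ is empty, since $W_+\setminus B_0\subset U_+$ and $U_-\cap B_0=\emptyset$. This contradiction gives $B\subset X$.

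\emph{Main obstacle.} The delicate part is the planar topology: connectedness of $B$, and the claim that an invariant separating complement component must sit inside the corresponding $U_\pm$. The second claim needs care because $W_+$ may meet $B_0$ and $W_+$ need not be connected. I would first reduce to the component of $W_+$ containing the top circle, using invariance together with continuity of $\psi$ to preserve that component. For the connectedness of the common boundary I would rely on the classical results cited in \cite{lec88}, rather than reprove them.
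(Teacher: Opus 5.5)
The paper does not prove this itself (it defers to Le Calvez), so I assess your argument on its own. The overall architecture is sensible, but two steps are unproved, and one of them is the heart of the statement.

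\textbf{Minimality.} Everything hinges on $W_+\cap U_-=\emptyset$ (your ``$W_+\setminus B_0\subset U_+$''). What you offer for it is forward invariance of the component of $W_+$ containing $\mathbb{S}\times\{1\}$, together with the shrinking of $\psi^n(A)$ to $B_0$. Neither fact excludes $W_+$ reaching into $U_-$, so the ``hence'' has nothing behind it.

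The missing idea is elementary. If $X$ is compact with $\psi(X)=X$, then $X=\psi^n(X)\subset\psi^n(A)$ for every $n$, so $X\subset B_0$. Now $U_-$ is connected: it is the increasing union of the half-open annuli $U_{n,-}$ below $\psi^n(A)$, all containing $\mathbb{S}\times\{-1\}$. It is also disjoint from $X$. Hence it lies in the piece of $A\setminus X$ containing $\mathbb{S}\times\{-1\}$, i.e. $U_-\subset W_-$, and likewise $U_+\subset W_+$.

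Your last step then finishes. The component of $A\setminus X$ through a point of $B\setminus X$ is open, so it meets both $U_+$ and $U_-$, which is impossible. Note that the right inclusion is $U_\pm\subset W_\pm$, not $W_\pm$ inside $U_\pm$: $W_+$ may contain points of $B_0$, such as the hair in Figure~\ref{Birk-no-attr}. Neither connectedness of $X$ nor invariance of $W_\pm$ is needed, so the planar-topology obstacle you anticipate there is not real. (If invariance in $\mathcal{X}$ only meant $\psi(X)\subset X$, first replace $X$ by $\bigcap_n\psi^n(X)$. This is still compact, connected and separating, and satisfies $\psi(X)=X$.)

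\textbf{Where $a<1$ enters.} Your separation step (the last exit point from $\mathrm{cl}(U_+)$ ``lies in the closure of the other side'') and your unicoherence step both require $A=\mathrm{cl}(U_+)\cup\mathrm{cl}(U_-)$, i.e. that $B_0$ has empty interior. You list density as a hypothesis of the quoted fact but never prove it, and the conformal ratio appears nowhere in your proposal.

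Density comes from dissipation: $\psi^*\omega=a\,\omega$ gives $\mathrm{area}(\psi^n(A))=a^n\,\mathrm{area}(A)\to 0$, so $B_0$ is Lebesgue-null. Without it the proposition fails. Take $\psi(q,p)=(q,g(p))$ with $g'>0$, $g=\mathrm{id}$ on $[-1/2,1/2]$ and $|g(p)|<|p|$ elsewhere. Then $B_0=\mathbb{S}\times[-1/2,1/2]$ and $B=\emptyset$, while the circles $\mathbb{S}\times\{c\}$, $|c|\le 1/2$, are pairwise disjoint elements of $\mathcal{X}$.

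Once density is known, separation is immediate: $A\setminus B=(A\setminus\mathrm{cl}(U_-))\sqcup(A\setminus\mathrm{cl}(U_+))$ with both pieces open. Connectedness then follows by unicoherence of the sphere obtained by collapsing $\mathbb{S}\times\{\pm1\}$, as you say. (For $B\subset\psi(B)$ you also need $\psi^{-1}(U_\pm)=U_\pm$. This holds because $\psi^{-1}(U_{n,\pm})=U_{n-1,\pm}$, with $U_{0,\pm}=\emptyset$.)
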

\begin{figure}[ht]
    \centering
    \includegraphics[scale=0.7]{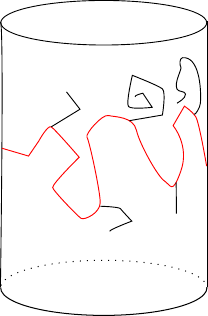}
    \caption{The red set corresponds to the Birkhoff attractor, while the union of the black and red sets gives the attractor à la Conley.}
    \label{Birk-no-attr}
\end{figure}
\begin{es}\label{es-semplice}
{\textnormal{The easiest example of dissipative map is 
$$\mathbb{S}\times [-1,1] \ni (q,p) \to (q,ap) \in \mathbb{S} \times ]-1,1[\, ,$$
where $a\in(0,1)$. Clearly, in such a case, the attractor coincides with the Birkhoff attractor, i.e. $B_0 = B = \mathbb{S}\times\{0\}$. There are in fact examples where the Birkhoff attractor is strictly contained in the attractor, as shown in Figure \ref{Birk-no-attr}.}}
\end{es}
\begin{es} {\textnormal{In the case of the damped pendulum: 
$$\ddot{\theta} = \sin \theta - \lambda \dot{\theta}\, ,$$
$\lambda \in \mathbb{R}_{>0}$, the Birkhoff attractor, which coincides with the attractor, is given by the closure of the unstable manifold of the hyperbolic fixed point $(\pi,0)$, that is
$$B_0 = B = \textnormal{cl}(W^u(\pi,0)) = W^u(\pi,0) \cup (0,0).$$ 
We refer to Figure \ref{fig:attractorPendulum}.}}
\begin{figure}[ht]
  \centering
    %% Creator: Inkscape 1.2.2 (b0a8486541, 2022-12-01), www.inkscape.org
%% PDF/EPS/PS + LaTeX output extension by Johan Engelen, 2010
%% Accompanies image file 'attractorPendulum.pdf' (pdf, eps, ps)
%%
%% To include the image in your LaTeX document, write
%%   \input{<filename>.pdf_tex}
%%  instead of
%%   \includegraphics{<filename>.pdf}
%% To scale the image, write
%%   \def\svgwidth{<desired width>}
%%   \input{<filename>.pdf_tex}
%%  instead of
%%   \includegraphics[width=<desired width>]{<filename>.pdf}
%%
%% Images with a different path to the parent latex file can
%% be accessed with the `import' package (which may need to be
%% installed) using
%%   \usepackage{import}
%% in the preamble, and then including the image with
%%   \import{<path to file>}{<filename>.pdf_tex}
%% Alternatively, one can specify
%%   \graphicspath{{<path to file>/}}
%% 
%% For more information, please see info/svg-inkscape on CTAN:
%%   http://tug.ctan.org/tex-archive/info/svg-inkscape
%%
\begingroup%
  \makeatletter%
  \providecommand\color[2][]{%
    \errmessage{(Inkscape) Color is used for the text in Inkscape, but the package 'color.sty' is not loaded}%
    \renewcommand\color[2][]{}%
  }%
  \providecommand\transparent[1]{%
    \errmessage{(Inkscape) Transparency is used (non-zero) for the text in Inkscape, but the package 'transparent.sty' is not loaded}%
    \renewcommand\transparent[1]{}%
  }%
  \providecommand\rotatebox[2]{#2}%
  \newcommand*\fsize{\dimexpr\f@size pt\relax}%
  \newcommand*\lineheight[1]{\fontsize{\fsize}{#1\fsize}\selectfont}%
  \ifx\svgwidth\undefined%
    \setlength{\unitlength}{184.2519685bp}%
    \ifx\svgscale\undefined%
      \relax%
    \else%
      \setlength{\unitlength}{\unitlength * \real{\svgscale}}%
    \fi%
  \else%
    \setlength{\unitlength}{\svgwidth}%
  \fi%
  \global\let\svgwidth\undefined%
  \global\let\svgscale\undefined%
  \makeatother%
  \begin{picture}(1,0.92307692)%
    \lineheight{1}%
    \setlength\tabcolsep{0pt}%
    \put(0,0){\includegraphics[width=\unitlength,page=1]{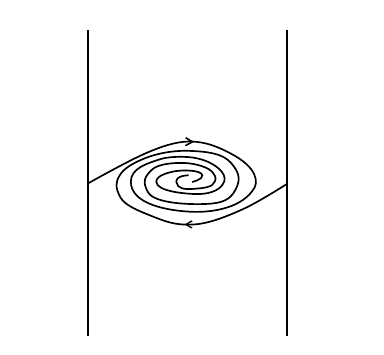}}%
    \put(-0.09232773,0.42331337){\color[rgb]{0,0,0}\makebox(0,0)[lt]{\lineheight{1.25}\smash{\begin{tabular}[t]{l}$(-\pi, 0)$\end{tabular}}}}%
    \put(0.75380052,0.43045041){\color[rgb]{0,0,0}\makebox(0,0)[lt]{\lineheight{1.25}\smash{\begin{tabular}[t]{l}$(\pi, 0)=(-\pi, 0)$\end{tabular}}}}%
    \put(0.32785607,0.59936517){\color[rgb]{0,0,0}\makebox(0,0)[lt]{\lineheight{1.25}\smash{\begin{tabular}[t]{l}$W^u(\pi, 0)$\end{tabular}}}}%
  \end{picture}%
\endgroup%

  \caption{The Birkhoff attractor for the damped pendulum.}
    \label{fig:attractorPendulum}
\end{figure}
\end{es}
\noindent The study of the dynamical as well as the topological complexity of $B$ passes through the notion of upper and lower rotation number $\rho^{\pm}$ of the Birkhoff attractor. To give and idea of $\rho^{\pm}$, let $V_{\pm}(q,p) := \set{ (q,y)\in T^*\mathbb{S}: \pm(y-p) \ge 0}$. Then $\rho^\pm$ are the rotation numbers of the points in the ``upper and lower graphs'' of $B$, that is 
$$B_{\pm} := \set{ (q,p) \in B: V_{\pm}(q,p) \setminus \{(q,p)\} \subseteq U_{\pm}}\, .$$
We stress that the definition of the Birkhoff attractor does not need any twist property. However, in order to define $\rho^\pm$, $\psi$ must be a dissipative twist map. We refer to \cite{lec88} for further details on $\rho^{\pm}$. \\
We conclude this brief discussion on the classical Birkhoff attractor by a fundamental result due to M. Charpentier \cite{cha34}, providing a sufficient condition on the upper and lower rotation number $\rho^{\pm}$ for the existence of a ``complex'' Birkhoff attractor.
\begin{figure}
\centering
\includegraphics[height=4cm, width=5cm,
keepaspectratio]{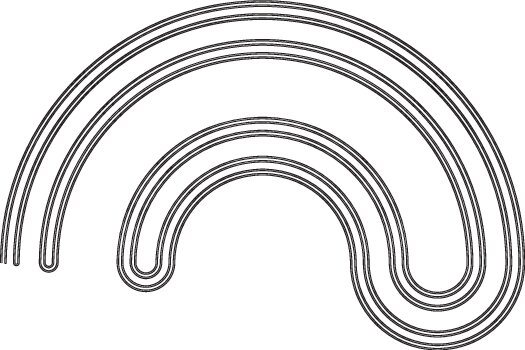}
\caption{An example of an indecomposable continuum (L Rempe-Gillen, CC BY-SA 3.0 https://creativecommons.org/licenses/by-sa/3.0, via Wikimedia Commons).}
\label{ind}
\end{figure}
\begin{thm} \label{charp}
Let $\psi: \mathbb{S} \times [-1 , 1] \to \mathbb{S} \times ]-1,1[$ be a dissipative twist map. If $\rho^+-\rho^->0$, then the corresponding Birkhoff attractor is an indecomposable continuum, i.e. it cannot be written as the union of two compact, connected, non-trivial sets, see Figure \ref{ind}.
\end{thm}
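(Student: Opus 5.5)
The approach is by contraposition: assuming that the Birkhoff attractor $B$ is decomposable, we show that $\rho^+=\rho^-$. The argument is planar-topological and relies on three inputs: Proposition \ref{Birk-attr-minimal}, Carathéodory's theory of prime ends applied to the two invariant components $U_+$ and $U_-$, and the characterisation of $\rho^\pm$ from \cite{lec88}, taken as given rather than proved. That characterisation says $\rho^\pm$ are the rotation numbers of the circle homeomorphisms induced by $\psi$ on the prime end compactifications of $U_\pm$. Under the twist hypothesis, these agree with the rotation numbers of the points of $B_\pm$.

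The first step is to record the structure of $B$. By definition $B=\mathrm{cl}(U_-)\cap\mathrm{cl}(U_+)$. Using Proposition \ref{Birk-attr-minimal} and the invariance of $U_\pm$, one checks that $B=\partial U_-=\partial U_+$. So $B$ is an annular continuum, the common boundary of two complementary annular domains (a ``cofrontier''). Minimality adds a further fact: no proper invariant subcontinuum of $B$ separates the annulus.

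The second step constructs a canonical monotone map from $B$ onto a circle. Suppose $B=K_1\cup K_2$ with $K_1,K_2$ proper subcontinua. A proper subcontinuum of a cofrontier cannot separate the annulus, since otherwise it would already contain the whole common boundary. Adapting Kuratowski's theory of irreducible continua to the annular setting, I would show the following. If such a cofrontier is decomposable, its proper subcontinua have empty interior in $B$, and the relation ``$x\sim y$ iff $x,y$ lie in a subcontinuum of $B$ that is nowhere dense in $B$'' gives an upper semicontinuous monotone decomposition of $B$. Its quotient is a circle $\mathbb S$, and the quotient map $\pi:B\to\mathbb S$ has degree one with respect to the annulus. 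The decomposition is defined purely topologically, so $\psi$ permutes its elements. This yields a circle homeomorphism $h$ with $\pi\circ\psi=h\circ\pi$, which is homotopic to the identity.

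The third step compares rotation numbers. Every prime end of $U_+$ has a principal set, which is a subcontinuum of $B$ lying in a single fibre of $\pi$. Using accessible points, which are dense and are reached by arcs from $U_+$, I would build a monotone degree-one map from the prime end circle of $U_+$ to $\mathbb S$ that semiconjugates the prime end homeomorphism to $h$. A monotone degree-one semiconjugacy preserves the rotation number. Hence $\rho^+=\mathrm{rot}(h)$, and the same argument for $U_-$ gives $\rho^-=\mathrm{rot}(h)$. Therefore $\rho^+=\rho^-$, contradicting $\rho^+-\rho^->0$.

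I expect the main obstacle to be the second step. Two things there need care: proving that the decomposition of a decomposable cofrontier into nowhere dense subcontinua is upper semicontinuous, and proving that its quotient is a circle rather than an arc or a point. This is where decomposability and the separation property must interact. My first attempt would go through irreducibility: lift to the universal cover $\mathbb R\times[-1,1]$, show that a fundamental piece of the lifted $B$ is irreducible between two translates, and then apply Kuratowski's theorem there to obtain the monotone map to $\mathbb R$, equivariant under the deck translation. The compatibility with prime ends in the third step should then be comparatively routine.
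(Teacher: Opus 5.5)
The paper gives no proof of this statement; it is quoted from Charpentier, with $\rho^\pm$ taken from Le Calvez. So your argument has to stand on its own, and Step~2 does not.

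\textbf{The gap in Step~2.} First, the assertion that in a decomposable cofrontier all proper subcontinua have empty interior is backwards. If $B=K_1\cup K_2$ with $K_i\subsetneq B$, then $K_1\supseteq B\setminus K_2$, which is a nonempty open subset of $B$. ``Every proper subcontinuum is nowhere dense'' is exactly the characterisation of \emph{indecomposable} continua.

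More seriously, the object you want, a canonical monotone map of a decomposable cofrontier onto $\mathbb{S}$, need not exist. Lifting to the universal cover does not help, since Kuratowski's theory gives a monotone map onto an arc with nowhere dense fibres only when every indecomposable subcontinuum is nowhere dense. For a concrete counterexample, let $K=I_1\cup I_2$ be a cofrontier in which $I_1,I_2$ are indecomposable, $I_1\cap I_2=\{a,b\}$, and each $I_j$ is irreducible between $a$ and $b$. Such a $K$ can be built Knaster-style: take $I_j$ as a nested intersection of thinner and thinner folded snakes from $a$ to $b$, arranged so that every point is approached from both complementary domains.
\begin{itemize}
\item Your $\sim$-classes inside $I_j$ are its composants, which are dense and not closed, so the decomposition is not upper semicontinuous.
\item Worse, no monotone $f\colon K\to\mathbb{S}$ is onto. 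A fibre over a point outside $f(\{a,b\})$ is connected and misses $\{a,b\}$, so it lies in $I_1$ or in $I_2$. Hence some $f(I_j)$, say $f(I_2)$, misses an open arc $W$. Cut $W$ at one point into arcs $W_1,W_2$. Each $\overline{f^{-1}(W_i)}$ is a subcontinuum of $I_1$ with nonempty interior, hence equals $I_1$. This forces $f(I_1)\subseteq\overline{W_1}\cap\overline{W_2}$, which is absurd.
\end{itemize}
Step~2 uses nothing about $\psi$, so it would have to apply to such a $K$; Step~3, built on the quotient map, falls with it.

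A smaller point: with $U_\pm$ the components of the complement of $B_0$, the equality $B=\partial U_+=\partial U_-$ fails when $B_0$ has hair, as in Figure~\ref{Birk-no-attr}. You should use the two components $U^1_\pm$ of the complement of $B$. For these, $B=\partial U^1_+=\partial U^1_-$ follows from the definitions together with the fact that $B_0$ has zero area.

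\textbf{A route that works.} Drop both the circle quotient and prime ends, and show directly that all points of $B$ share one rotation number. Let $p\colon\mathbb{R}\times[-1,1]\to\mathbb{S}\times[-1,1]$ be the covering map and $T(x,y)=(x+1,y)$.
\begin{enumerate}
\item A proper subcontinuum $L\subsetneq B$ does not separate the annulus, so it misses an arc joining the two boundary circles. Hence $p^{-1}(L)$ is a disjoint union of translates $T^k\tilde L$ of a compact set $\tilde L$.
\item Suppose $B=K_1\cup K_2$. Accessible points are dense in $B=\partial U^1_\pm$, so pick $x,x'\in B\setminus K_2\subseteq K_1$ reached by arcs $\gamma\subset U^1_+$ and $\gamma'\subset U^1_-$ from the two boundary circles. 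Then $\Gamma=\tilde\gamma\cup\tilde K_1\cup\tilde\gamma'$ separates the strip into a left side and a right side, and $\Gamma\cap\tilde B=\tilde K_1$.

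Now suppose some $\tilde L$ had points to the left of $\Gamma$ and to the right of $T^2\Gamma$. Then $\tilde L$ and $T^{-1}\tilde L$ both meet the disjoint sets $\tilde K_1$ and $T\tilde K_1$. Apply Janiszewski's theorem to $\tilde L$ and $T^{-1}\tilde L\cup\tilde K_1\cup T\tilde K_1$; their intersection $(\tilde L\cap\tilde K_1)\sqcup(\tilde L\cap T\tilde K_1)$ is disconnected, so their union separates the plane. A bounded complementary component misses $\widetilde{U}^1_\pm$, which are connected and lie in the unbounded component. So that component lies in $\tilde B$, contradicting that $\tilde B$ has empty interior. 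Hence lifts of proper subcontinua have width at most a constant $W$ independent of $L$.
\item Choose lifts $\tilde K_1,\tilde K_2$ that meet. Since each $\psi^n(K_i)$ is a proper subcontinuum, $\tilde\psi^n(\tilde K_1\cup\tilde K_2)$ has width at most $2W$ for every $n$. Every point of $\tilde B$ is a translate of a point of $\tilde K_1\cup\tilde K_2$. So any two orbits in $B$ stay within horizontal distance $2W$ in the lift, and all points of $B$ share one rotation number.
\end{enumerate}
Since $B_\pm\subseteq B$, this gives $\rho^+=\rho^-$. The twist hypothesis enters only through the definition of $\rho^\pm$.
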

\indent In order to proceed with the definition of the generalized Birkhoff attractor, we need to recall an immediate consequence of Proposition \ref{conformal invariance phi} which will play an important role in the sequel.
\begin{prp} Let $\psi \in \mathrm{Diff}(T^*N)$ be a conformally exact symplectic (CES) diffeomorphism with conformal ratio $a \in (0,1)$. Then, for every $(L_1,L_2) \in \widehat{\mathfrak{L}_0}(T^*N) \times \widehat{\mathfrak{L}_0}(T^*N)$ it holds that
$$\gamma(\psi(L_1),\psi(L_2)) = a \gamma(L_1,L_2)\, .$$
\end{prp}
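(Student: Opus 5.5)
The plan is to prove the identity first for genuine Lagrangians $L_1, L_2 \in \mathfrak{L}_0(T^*N)$, and then extend it to the completion by density and continuity.

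\emph{Step 1: smooth Lagrangians.} Fix $L_1, L_2 \in \mathfrak{L}_0(T^*N)$ and choose branes over them. Apply Proposition \ref{conformal invariance phi} with $\alpha=\mu$ and with $\alpha=1$. Since $\psi$ is homotopic to the identity (it is isotopic to the identity through the conformal flow, as remarked after Proposition \ref{conformal invariance phi}), we have $\psi^*\mu=\mu$ and $\psi^*1=1$. This gives
$$c_\pm(\psi(L_1),\psi(L_2)) = a\,c_\pm(L_1,L_2).$$
Subtracting the two identities yields $\gamma(\psi(L_1),\psi(L_2)) = a\,\gamma(L_1,L_2)$. Two points need a brief check here:
\begin{itemize}
\item $\psi$ maps $\mathfrak{L}_0(T^*N)$ into itself.
\item $\gamma$ does not depend on the choice of primitives, by the remark on $T_a$; so any brane lift of $\psi(L_i)$ may be used.
\end{itemize}
For the first point I would argue as follows. $\psi$ is CES, so $\psi(L)$ is exact whenever $L$ is: $\lambda|_{\psi(L)}$ pulls back to $a\lambda|_L+df$. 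Moreover $\psi(\mathcal O_N)$ is Hamiltonian isotopic to $\mathcal{O}_N$, since $\psi$ is isotopic to the identity through CES maps. This identifies $\psi(\varphi(\mathcal O_N))$ as $(\psi\varphi\psi^{-1})\psi(\mathcal O_N)$, and conjugating a Hamiltonian diffeomorphism by a CES map gives again a Hamiltonian diffeomorphism. If necessary I would simply take this invariance as part of the content of Proposition \ref{conformal invariance phi}, which is stated for pairs in $\mathscr L_0(T^*N)$ mapped by $\psi$.

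\emph{Step 2: extension to the completion.} By Step 1, $\psi$ is $a$-Lipschitz on $(\mathfrak{L}_0(T^*N),\gamma)$, hence uniformly continuous. It therefore extends uniquely to a continuous map on $\widehat{\mathfrak{L}_0}(T^*N)$, and this extension is what $\psi(L)$ means for $L$ in the completion. Given $L_1, L_2$ in the completion, pick sequences $L_1^n \to L_1$ and $L_2^n\to L_2$ in $\mathfrak{L}_0(T^*N)$. Since $\gamma$ is continuous on the completion, passing to the limit in $\gamma(\psi(L_1^n),\psi(L_2^n)) = a\gamma(L_1^n,L_2^n)$ gives the claim.

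The main obstacle is Step 1's use of Proposition \ref{conformal invariance phi}, specifically ensuring that $\psi^*$ acts trivially on $H^*(N)$ and that $\psi$ preserves $\mathfrak{L}_0$. Everything else is a formal Lipschitz-extension argument.
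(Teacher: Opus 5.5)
Your proposal is correct and follows the paper's route: the paper presents this statement as an immediate consequence of Proposition~\ref{conformal invariance phi} applied to $c_\pm$, and leaves implicit the passage to the completion by Lipschitz extension that you carry out in Step 2. One small correction: the hypothesis does not make $\psi$ isotopic, or even homotopic, to the identity (the remark after Proposition~\ref{conformal invariance phi} only covers time-one maps of conformally symplectic flows), but you do not need this, since $\psi^*1=1$ for any map, $\psi^*\mu=\pm\mu$, and $c(-\mu,\cdot)=c(\mu,\cdot)$; similarly, the paper simply takes for granted that $\psi$ preserves $\mathfrak{L}_0(T^*N)$, so your fallback of reading this into Proposition~\ref{conformal invariance phi} is consistent with the text.
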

\noindent As a consequence of this proposition, and by Banach-Caccioppoli’s fixed point theorem,
$$\psi: \widehat{\mathfrak{L}_0}(T^*N) \to \widehat{\mathfrak{L}_0}(T^*N)$$
has a unique fixed point $L_{\infty} = L_{\infty}(\psi)$. 
\begin{defn}
Let $\psi \in \mathrm{Diff}(T^*N)$ be a conformally exact symplectic (CES) diffeomorphism with conformal ratio $a \in (0,1)$. The subset
$$B_{\infty} = B_{\infty}(\psi) = \gamma\textnormal{-supp}(L_{\infty})$$
is the generalized Birkhoff attractor of $\psi$. 
\end{defn}
\noindent The main properties of $B_{\infty}$ are resumed in the next theorem.
\begin{thm}
Let $\psi \in \mathrm{Diff}(T^*N)$ be a conformally exact symplectic (CES) diffeomorphism with conformal ratio $a \in (0,1)$. The Birkhoff attractor $B_{\infty}$ of $\psi$ is an invariant, closed and $\gamma$-coisotropic subset of $T^*N$. 
\end{thm}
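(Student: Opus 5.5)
The three properties are of different depth, so I would treat them separately. Closedness and $\gamma$-coisotropy come for free from what is already established, and invariance is the only point that uses the dynamics.

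\emph{Closedness.} By definition $B_\infty=\gamma\textnormal{-supp}(L_\infty)$ with $L_\infty\in\widehat{\mathfrak{L}_0}(T^*N)$. Item $(ii)$ of Remark \ref{3remarks} says that the $\gamma$-support of any element of the completion is closed. For completeness I would recall why: if $z\notin\gamma\textnormal{-supp}(L_\infty)$, pick $\varepsilon$ witnessing this. Then every $z'\in B(z,\varepsilon/2)$ is witnessed by the ball $B(z',\varepsilon/2)\subset B(z,\varepsilon)$, so the complement is open.

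\emph{$\gamma$-coisotropy.} This is exactly Proposition \ref{gamma coisotropic} applied to $L=L_\infty$. The definition of $\gamma$-coisotropic also requires $B_\infty\neq\emptyset$. I would obtain this from Proposition \ref{no empty}. If one knows $L_\infty\in\widehat{\mathfrak{L}_c}(T^*N)$, then part $(ii)$ gives $B_\infty\cap T^*_qN\neq\emptyset$ for every $q$. In general, the text already asserts that $\gamma$-supports of elements of $\widehat{\mathfrak{L}_0}(T^*N)$ are nonempty as a consequence of Proposition \ref{no empty}, and I would invoke that.

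\emph{Invariance.} This is the key step. $L_\infty$ is the fixed point of the contraction $\psi$ on $\widehat{\mathfrak{L}_0}(T^*N)$, so $\psi(L_\infty)=L_\infty$. Since $\psi$ is CES, item $(iii)$ of Remark \ref{3remarks} gives
$$\psi(B_\infty)=\psi(\gamma\textnormal{-supp}(L_\infty))=\gamma\textnormal{-supp}(\psi(L_\infty))=\gamma\textnormal{-supp}(L_\infty)=B_\infty .$$

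The main obstacle is justifying item $(iii)$ in the conformal setting, since the paper leaves it as an exercise. I would argue as follows. Let $z\in\gamma\textnormal{-supp}(L)$, let $\varepsilon>0$, and let $\phi$ be supported in $B(z,\varepsilon)$ with $\gamma(L,\phi(L))>0$. Then $\psi\phi\psi^{-1}$ is a compactly supported Hamiltonian diffeomorphism: conjugating by a conformal map rescales the Hamiltonian by the conformal factor, i.e.\ $\psi\phi\psi^{-1}$ is generated by $a\,H\circ\psi^{-1}$. Its support lies in $\psi(B(z,\varepsilon))$, which is contained in a small ball around $\psi(z)$ by continuity. Moreover, by the proposition just before the definition of $B_\infty$ (extended to the completion by continuity),
$$\gamma(\psi L,\psi\phi\psi^{-1}(\psi L))=\gamma(\psi L,\psi\phi L)=a\,\gamma(L,\phi L)>0 .$$
This proves $\psi(\gamma\textnormal{-supp}(L))\subset\gamma\textnormal{-supp}(\psi L)$. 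The reverse inclusion follows by applying the same argument to $\psi^{-1}$, which is CES with ratio $a^{-1}$, on a neighbourhood where it is defined.
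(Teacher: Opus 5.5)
Your proposal is correct and follows the paper's proof: closedness and invariance come from items $(ii)$ and $(iii)$ of Remark \ref{3remarks} (the latter combined with $\psi(L_\infty)=L_\infty$), and $\gamma$-coisotropy is Proposition \ref{gamma coisotropic}. Your extra justifications are sound and fill in details the paper leaves implicit: the conjugation $\psi\phi\psi^{-1}$, generated by $a\,H\circ\psi^{-1}$, for item $(iii)$, and nonemptiness via Proposition \ref{no empty}, which applies directly because an empty $\gamma$-support is compact.
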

\begin{proof}
Closeness and invariance of $B_{\infty}$ are points $(ii)$ and $(iii)$ of Remark \ref{3remarks} respectively. The fact that $B_{\infty}$ is $\gamma$-coisotropic in $T^*N$ is exactly Proposition \ref{gamma coisotropic}. 
\end{proof}
\begin{rem} \textnormal{Let $D_rT^*N := \Set{(q, p)\in T^*N: \vert p \vert_{g^*} \leq r}$, where $g^*$ is the dual metric induced by a Riemannian metric $g$ on $N$. We additionally observe that $B_{\infty} \in \widehat{\mathfrak{L}_c}(T^*N)$ provided we assume $\psi(D_rT^*N) \subset D_rT^*N$.}
\end{rem}
\begin{rem}\textnormal{
Let now return to the case of the cylinder $\mathbb{S} \times [-1,1]$. By $(ii)$ of Proposition \ref{no empty}, $B_{\infty}(\psi) = \gamma\textnormal{-supp}(L_{\infty}(\psi))$ intersects all vertical fibers of $\mathbb{S} \times [-1,1]$ so that $B_{\infty}(\psi)$ is an annular set that is separating. Since $B_{\infty}(\psi)$ is also compact and invariant, we have that the $B_{\infty}(\psi) \subseteq B_0(\psi)$, see Definition \ref{Conley} for $B_0(\psi)$. It is worth noting that $B_{\infty}(\psi)$ can be strictly contained in $B_0(\psi)$, because $B_0(\psi)$ can be non $\gamma$-coisotropic at certain points (e.g. at the end of the ``hair'' of Figure \ref{Birk-no-attr}) for the same reason that $[0,1] \times \{0\}$ is not $\gamma$-coisotropic at $(0,0)$ and $(1,0)$. We refer to the Examples of Section \ref{prp:coiImpGamCoi}}.
\end{rem}}}
Here below we prove that the generalized Birkhoff attractor $B_{\infty}$ coincides with the classical Birkhoff attractor $B$ in $T^*\mathbb{S}$.
\begin{thm} Let $\psi \in \mathrm{Diff}(T^*\mathbb{S})$ be a dissipative map with conformal ratio $a \in (0,1)$. Then $B_\infty(\psi) = B(\psi)$.
\end{thm}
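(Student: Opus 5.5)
We prove the two inclusions $B(\psi)\subseteq B_\infty(\psi)$ and $B_\infty(\psi)\subseteq B(\psi)$ separately.

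\textbf{First inclusion, $B \subseteq B_\infty$.} This should follow from the minimality of Proposition \ref{Birk-attr-minimal}. By the preceding remark, $B_\infty(\psi)$ is compact and invariant (Remark \ref{3remarks}(iii)). By Proposition \ref{no empty}(ii) it meets every fibre $T^*_q\mathbb{S}$. It is contained in $B_0(\psi)\subset \mathbb{S}\times[-1,1]$ because $\psi(D_1T^*\mathbb{S})\subset D_1T^*\mathbb{S}$.

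We then check that $B_\infty$ separates the annulus and is connected, so that $B_\infty\in\mathcal{X}$. For separation, we argue by contradiction. If a path in the complement joined $\mathbb{S}\times\{-1\}$ to $\mathbb{S}\times\{1\}$, we would thicken it and push it off with a Hamiltonian equal to a constant $a\neq 0$ near $\gamma\text{-supp}$ below the path and $0$ above it. More simply, we would use a graph $\Gamma_{tf}$ avoiding $B_\infty$, exactly as in the proof of Proposition \ref{no empty}(ii), to contradict Proposition \ref{no empty}(i). For connectedness, if $B_\infty=K_1\sqcup K_2$ with compact pieces, we would take a Hamiltonian $H\equiv a$ near $K_1$ and $H\equiv 0$ near $K_2$. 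Then Lemma \ref{lem:hamTraCom}, or its localized version via Proposition \ref{serve dopo}, gives $\varphi(\tilde L_\infty)=T_a\tilde L_\infty$. Pairing with a suitable graph $\Gamma_f$ that meets only one piece should force $a=0$.

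Once $B_\infty\in\mathcal{X}$, Proposition \ref{Birk-attr-minimal} gives $B\subseteq B_\infty$.

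\textbf{Second inclusion, $B_\infty\subseteq B$.} This is the main obstacle. The plan is to realize $L_\infty$ as the $\gamma$-limit of $\psi^n(L)$ for a suitable Lagrangian $L$ and then apply Lemma \ref{lem:gamSupLimInf}. Since $B$ separates and the complement consists of the open invariant sets $U_\pm$, we approximate $B$ from the outside: for small $\varepsilon$ we choose an exact essential curve $L_\varepsilon$ contained in the $\varepsilon$-neighbourhood of $B$. Such a curve exists because $B$ is the common boundary of $U_-$ and $U_+$, so a curve can be drawn close to $B$ inside $U_-\cup U_+\cup B$; we then correct the area by a small vertical shift to make it exact.

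We would like every iterate $\psi^n(L_\varepsilon)$ to remain in this $\varepsilon$-neighbourhood. The difficulty is that a neighbourhood of $B$ need not be invariant. To handle this, we instead use Lemma \ref{lem:gamSupLimInf} in its limit form, together with the fact that points of $U_\pm$ far from $B$ accumulate only on $B_0$.

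The refined step is then the following. A point $z\notin B$ lies in $U_+$ (say) at positive distance from $\mathrm{cl}(U_-)$. On $U_+\setminus B$, the iterates $\psi^n(L)$ of any curve $L$ lying in $\mathrm{cl}(U_-)\cup B$ never enter, because $\mathrm{cl}(U_-)$ is invariant. Taking $L\subset \mathrm{cl}(U_-)$ and $L'\subset\mathrm{cl}(U_+)$, both exact, we get $\gamma\text{-supp}(L_\infty)\subset \mathrm{cl}(U_-)\cap\mathrm{cl}(U_+)=B$. Both sequences converge to the same $L_\infty$ by uniqueness of the fixed point, and Lemma \ref{lem:gamSupLimInf} applied to each sequence provides the two containments.

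The delicate point is constructing exact curves inside $\mathrm{cl}(U_\pm)$. We would take curves near $\mathbb{S}\times\{\mp1\}$ pushed inward by finitely many iterates of $\psi$, or adjust them by a small translation, using that $U_\pm$ are open neighbourhoods of the boundary circles.
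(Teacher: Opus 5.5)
\textbf{Main gap: the inclusion $B_\infty\subseteq B$.} Your argument needs smooth exact essential curves $L\subseteq\mathrm{cl}(U_-)$ and $L'\subseteq\mathrm{cl}(U_+)$. Given such curves, your use of Banach's theorem and Lemma~\ref{lem:gamSupLimInf} would be fine. The problem is that such curves exist only in a trivial case.

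Since $\psi^*\lambda-a\lambda$ is exact, $\int_{\psi^n(\mathbb{S}\times\{\pm1\})}p\,dq=\pm a^n$. The region between $\mathbb{S}\times\{\pm1\}$ and $\psi^n(\mathbb{S}\times\{\pm1\})$ lies in $U_\pm$ and has area $1-a^n$, so $\mathrm{Area}(U_\pm)\ge 1$. Since $B_0$ is invariant and $\psi$ multiplies areas by $a$, $\mathrm{Area}(B_0)=0$. The two areas therefore sum to $2$, so both equal $1$.

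Now let $L\subseteq\mathrm{cl}(U_-)$ be essential. Then $U_+$ lies above $L$, since it is connected, contains $\mathbb{S}\times\{1\}$ and misses $\mathrm{cl}(U_-)$. Hence $\int_L p\,dq=-1+\mathrm{Area}(\text{region between }\mathbb{S}\times\{-1\}\text{ and }L)\le -1+\mathrm{Area}(U_-)=0$. Equality holds only if no point of $U_-$ lies above $L$, which forces $L\cap U_-=\emptyset$, i.e. $L\subseteq\partial U_-\subseteq B_0$. Applying the same reasoning to the exact curve $\psi(L)\subseteq\mathrm{cl}(U_-)$ gives $\psi(L)=L$, and then $L=B$ by Proposition~\ref{Birk-attr-minimal}.

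So your curves exist only when $B$ is already a smooth circle. They do not exist for the weakly damped pendulum, where $B$ spirals into the focus, nor for an indecomposable $B$ as in Theorem~\ref{charp}. Your suggested repairs run into exactly this. The curve $\psi^n(\mathbb{S}\times\{\mp\alpha\})$ has Liouville class $\mp a^n\alpha\neq 0$, and its exact translate $\tau_{\pm a^n\alpha}\psi^n(\mathbb{S}\times\{\mp\alpha\})$ lies in $\mathrm{cl}(U_\mp)$ only in that trivial case.

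The missing idea is that exactness is needed only up to an error that vanishes in the $\liminf$. The paper takes $\Lambda_0=\mathbb{S}\times\{\alpha\}\subset U_+$ and the exact curves $\Lambda_n=\tau_{-a^n\alpha}\psi^n(\Lambda_0)=f_n(\Lambda_{n-1})$, where $f_n=\tau_{-a^n\alpha}\circ\psi\circ\tau_{a^{n-1}\alpha}$ are contractions of ratio $a$ converging to $\psi$. This is not the orbit of a single map, so Banach's theorem must be replaced by the fixed point result for such sequences (Proposition~3.8 of \cite{arnHumVit24}), which gives $\Lambda_n\to L_\infty$. Lemma~\ref{lem:gamSupLimInf}, together with $\tau_{-a^n\alpha}\to\mathrm{id}$ uniformly, then yields $B_\infty\subseteq\liminf\psi^n(\Lambda_0)\subseteq\mathrm{cl}(U_+)$, and symmetrically for $U_-$.

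\textbf{Second gap: the inclusion $B\subseteq B_\infty$.} Minimality requires $B_\infty$ to be connected, and your argument for connectedness fails. Lemma~\ref{lem:hamTraCom} needs $H\equiv a$ near the \emph{whole} $\gamma$-support. With $H=a$ near $K_1$ and $H=0$ near $K_2$, Proposition~\ref{serve dopo} only gives $\varphi(L_\infty)=L_\infty$, i.e. $\varphi(\tilde L_\infty)=T_b\tilde L_\infty$ for some unknown $b$. Near $K_1$ the lemma would ``predict'' $b=a$, near $K_2$ it would predict $b=0$, and deciding between them is precisely the connectedness question.

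Pairing with an exact curve avoiding $K_1$ just yields $b=0$. A contradiction would also need an exact curve avoiding $K_2$, and nothing guarantees one exists: every exact curve may be forced through $K_1$. Connectedness is a genuine theorem \cite{AGIV24}, which the paper only invokes for this alternative route.

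The paper's main route avoids connectedness. Once $B_\infty\subseteq B$ is known, separation suffices. A point $z\in B\setminus B_\infty$ would have a small ball disjoint from $B_\infty$. That ball meets both $U_+$ and $U_-$ (as $z\in\mathrm{cl}(U_+)\cap\mathrm{cl}(U_-)$), so it would join the two complementary components of $B_\infty$.

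For separation itself, note that a graph $\Gamma_{tf}$ avoiding $B_\infty$ need not exist even when a crossing path does. Instead, thread an exact essential curve through a thickened crossing path, adjusting its area in $\{|p|>1\}$, and apply Proposition~\ref{no empty}(i).
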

\begin{proof}
Let us start by proving the inclusion $B_{\infty}(\psi) \subseteq B(\psi)$. Since $B(\psi)$ is defined by $B(\psi) = \textnormal{cl}(U_-) \cap \textnormal{cl}(U_+)$, we need to show that $B_{\infty}(\psi)$ is contained in both $\textnormal{cl}(U_\pm)$. \\
For every $\alpha \in [-1,1]$, we indicate by $\mathcal{L}_\alpha(\mathbb{S} \times [-1,1])$ the set of simple, homotopically non trivial curves with Liouville class $\alpha \in H^1(\mathbb{S};\mathbb{R}) \cong \mathbb{R}$. Remark that by definition these Lagrangians are not exact unless $\alpha=0$. Moreover, let denote by $\tau_{\alpha}:(q,p) \mapsto (q,p+\alpha)$ the $\alpha$-translation on the fiber. Then $\psi$ sends $\mathcal{L}_\alpha(\mathbb{S} \times [-1,1])$ to $\mathcal{L}_{a\alpha}(\mathbb{S} \times [-1,1])$ and $\tau_{\alpha}$ sends $\mathcal{L}_0(\mathbb{S} \times [-1,1])$ to $\mathcal{L}_{\alpha}(\mathbb{S} \times [-1,1])$. 
Given an element $\Lambda_0 \in \mathcal{L}_{\alpha}(\mathbb{S} \times [-1,1])$, we consider the sequence:
$$\Lambda_n := \tau_{-a^n\alpha} \circ \psi^n(\Lambda_0)\, , \qquad n \ge 1\, .$$
Every $\Lambda_n $ belongs to $\mathcal{L}_0(\mathbb{S} \times [-1,1])$ and hence to $\mathfrak{L}_0(\mathbb{S} \times [-1,1])$ because the Nearby Lagrangian Conjecture is true for $T^*\mathbb{S}$ --see the Exercise at the end of this proof. Moreover, if we define 
\begin{equation} \label{iterazione}
f_n := \tau_{-a^n\alpha} \circ \psi \circ \tau_{a^{n-1} \alpha}, \qquad n \ge 1\ ,
\end{equation}
it is easy to show that
$$
\Lambda_n = f_n(\Lambda_{n-1}), \qquad n \ge 1\, .
$$
Consider the maps $\psi$ and $f_n$ acting from the complete space $(\widehat{\mathfrak{L}_0}(\mathbb{S} \times [-1,1]),\gamma)$ into itself. They are all contractions of ratio $a$ and moreover $f_n \stackrel{\gamma}{\to} \psi$. Consequently, we can apply the fixed point result given by Proposition 3.8 in \cite{arnHumVit24}, guaranteeing that for every $L \in \widehat{\mathfrak{L}_0}(\mathbb{S} \times [-1,1])$, the sequence $f_n \circ \ldots \circ f_2 \circ f_1 (L)$ $\gamma$-converges to the (unique) fixed point of $\psi$, which is $L_{\infty}(\psi)$. \\
Take now $\Lambda_0 = \mathbb{S} \times \{\alpha\}$ for $\alpha$ so close to $1$ that $\mathbb{S} \times \{\alpha\} \subset U_+$ and apply the above convergence result to $\Lambda_1 = f_1(\Lambda_0) \in \mathfrak{L}_0(\mathbb{S} \times [-1,1])$ and to the sequence $(f_n)_{n \ge 2}$ defined as in (\ref{iterazione}). We then obtain that the sequence $(\Lambda_n)_{n \ge 1}$ in $\mathfrak{L}_0(\mathbb{S} \times [-1,1])$, defined iteratively by $\Lambda_n = f_n(\Lambda_{n-1})$, $\gamma$-converges to $L_{\infty}(\psi) \in \widehat{\mathfrak{L}_0}(\mathbb{S} \times [-1,1])$. By Lemma \ref{lem:gamSupLimInf} we find the first inclusion in the following expression:
\begin{eqnarray*}
 & B_{\infty}(\psi) = \gamma\textnormal{-supp}(L_{\infty}(\psi)) \subseteq
 \liminf \gamma\textnormal{-supp}(\Lambda_n) = \lim \Lambda_n \\ & = \liminf \psi^n(\mathbb{S} \times \{\alpha\}) \subseteq \liminf \textnormal{cl}(U_{n,+}) = \textnormal{cl}(U_+)\, ,     
\end{eqnarray*}
where $U_{n,+}$ indicates the connected component of $\mathbb{S} \times [-1,1] \setminus (\psi^n(\mathbb{S} \times [-1,1]))$ containing $\mathbb{S} \times \{1\}$, so that $U_+ = \bigcup_{n \ge 1} U_{n,+}$. \\
We remark that the equality $ \liminf \gamma\textnormal{-supp}(\Lambda_n) = \lim \Lambda_n$ is true since $(\Lambda_n)_{n \ge 1}$ is a sequence in $\mathfrak{L}_0(\mathbb{S} \times [-1,1])$. On the other hand, the equality $\lim \Lambda_n = \liminf \psi^n(\mathbb{S} \times \{\alpha\})$ holds since $\Lambda_n = \tau_{-a^n \alpha} \circ \psi^n(\mathbb{S} \times \{ a \})$ and $\tau_{-a^n \alpha} \circ \psi^n \stackrel{C^0}{\rightarrow} \psi^n$. 
\newline
Similarly, taking $\Lambda_0 = \mathbb{S} \times \{-\alpha\}$ for $\alpha$ close enough to $1$ in order to have
$\mathbb{S} \times \{-\alpha\} \subset U_-$, we have that
$$B_{\infty}(\psi) = \gamma\textnormal{-supp}(L_{\infty}(\psi)) \subseteq \textnormal{cl}(U_-)\, .$$
We then obtain the desired inclusion:
$$B_{\infty}(\psi) \subseteq \textnormal{cl}(U_+) \cap \textnormal{cl}(U_-) = B(\psi)\, .$$
\indent We are now left with the task of proving the reverse inclusion, i.e. that $B(\psi) \subseteq B_{\infty}(\psi)$. By $(ii)$ of Proposition \ref{no empty}, $B_{\infty}(\psi) = \gamma\textnormal{-supp}(L_{\infty}(\psi))$ intersects all vertical fibers of $\mathbb{S} \times [-1,1]$, hence is separating $\mathbb{S} \times [-1,1]$:
$$(\mathbb{S} \times [-1,1]) \setminus B_{\infty}(\psi) = W_+ \sqcup W_-\, .$$
We notice that there cannot be another bounded connected component otherwise it would be invariant and hence of zero measure, since $\psi$ is dissipative with conformal ratio $a \in (0,1)$. We already know that $B_{\infty}(\psi) \subseteq B(\psi)$. Then 
$$\underbrace{(\mathbb{S} \times [-1,1]) \setminus B(\psi)}_{=U^1_+ \sqcup U^1_-} \subseteq \underbrace{(\mathbb{S} \times [-1,1]) \setminus B_{\infty}(\psi)}_{= W_+ \sqcup W_-}\, .$$
Hence $U^1_{\pm} \subseteq W_{\pm}$. In order to conclude, it is sufficient to prove that the previous inclusions are equalities. Without loss of generality, suppose on the contrary that there exists $z = (q,p) \in W_+ \setminus U^1_+$. Then there exists $\varepsilon > 0$ such that $B(z,\varepsilon) \subseteq W_+ \Rightarrow d(z,W_-) \ge \varepsilon$. Since $U_-^1 \subseteq W_-$, we have also $d(z,U^1_{-}) \ge \varepsilon$. But if $z \notin U^+_1$ then we must have $z \in \textnormal{cl}(U_-^1)$ and therefore $d(z,U_-^1) = 0$, which is the desired contradiction. So $U^1_{\pm} = W_{\pm}$ and $B(\psi) \subseteq B_{\infty}(\psi)$. \\
\indent Another way to prove the above inclusion is using the fact that $B_{\infty}(\psi)$ is connected in the case of a cotangent bundle, see \cite{AGIV24}. Consequently, $B_{\infty}(\psi) \in \mathcal{X}$, where $\mathcal{X}$ is the set of compact, connected, invariant subsets of $\mathbb{S} \times [-1 , 1]$ that separate the annulus. As a consequence of Proposition \ref{Birk-attr-minimal}, the desired inclusion immediately follows. 
\end{proof}
\begin{exo}\label{ex:nlc}
\textnormal{Show the Nearby Lagrangian Conjecture on $T^*\sphere$: an exact Lagrangian in $(T^*\mathbb{S}, -d(pdq))$ is Hamiltonian isotopic to $\mathcal{O}_\sphere$. Prove at first the easy case where our exact Lagrangian is the graph of the differential of a function $f:\sphere\rightarrow \R$. Then, consider a general exact Lagrangian $L_1$: it is an essential curve that bounds zero area with the zero section. Consider any isotopy $L:[0, 1]\times\sphere\rightarrow T^*\sphere$ via embedded, essential curves with image $\mathcal{O}_\sphere$ and $L_1$ for $t = 0$ and $t = 1$ respectively: it is a Lagrangian isotopy. Show that this isotopy may be assumed to be in exact Lagrangians. Remark that if $t_0$ and $t_1$ are close enough (without loss of generality $t_0<t_1$) then $L(t_1, \cdot)$ is contained in a Weinstein neighborhood of $L(t_0, \cdot)$. Prove that it is the graph of the differential of a function in this neighborhood and apply the first part. Now, iterate this construction and prove the Nearby Lagrangian Conjecture on $T^*\sphere$.}
\end{exo}
As explained in Proposition \ref{Birk-attr-minimal} and Theorem \ref{charp}, the topology of the classical Birkhoff attractor is rather well understood. The following result partially answers the question on general topological properties of $B_{\infty}(\psi)$. In particular, it generalizes a property of the classical Birkhoff attractor $B(\psi)$, which is cohomologically non-trivial, in the sense that if we restrict the generator of $H^1(\sphere)$ to any open neighborhood of $B(\psi)$, it does not vanish.
\begin{thm}
Let $\psi \in \mathrm{Diff}(T^*N)$ be a conformally exact symplectic (CES) diffeomorphism such that $B_{\infty}(\psi)$ is compact. Let $\pi: B_{\infty}(\psi) \rightarrow N$ be the canonical projection. Then 
$$\pi^*:H^*(N)\rightarrow H^*(B_{\infty}(\psi))$$ 
is injective, where \begin{equation*}
    H^*(B_{\infty}(\psi)):=\lim_{\substack{\longrightarrow \\ U\supset B_{\infty}(\psi) \\ U \ \mathrm{open}}} H^*(U)\, .
\end{equation*}
\end{thm}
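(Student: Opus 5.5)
The plan is to prove the injectivity of $\pi^*$ for the $\gamma$-support of any $L \in \widehat{\mathfrak{L}_c}(T^*N)$, and then apply it to $L = L_\infty(\psi)$, so that $B_\infty(\psi) = \gamma\textnormal{-supp}(L_\infty)$. The mechanism is the Lusternik--Schnirelmann argument (Theorem \ref{thm:lusSch}) applied to spectral invariants computed relative to graphs $\Gamma_{f}$, combined with locality of the $\gamma$-support, namely Lemma \ref{lem:hamTraCom} and Proposition \ref{serve dopo}.

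\textbf{Step 1: reduce to a strict inequality of spectral invariants.} Suppose, by contradiction, that $0 \neq \beta \in H^k(N)$ with $k \geq 1$ satisfies $\pi^*\beta = 0$ in $H^*(U)$ for some open $U \supset B_\infty$. The degree-$0$ case is trivial because $B_\infty \neq \emptyset$. By Poincaré duality, with $\mathbb{Z}_2$ coefficients or $N$ oriented, choose $\alpha$ with $\alpha \cup \beta = \mu$. For smooth $L \in \mathfrak{L}_0(T^*N)$ and $\Gamma_f$ transverse to $L$, Theorem \ref{thm:lusSch}$(ii)$ gives the following: if $c(\alpha,\Gamma_f,L) = c(\mu,\Gamma_f,L)$, then $\beta$ is nonzero on $\mathrm{Crit}$, that is, on $\Gamma_f \cap L$, which is finite. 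This is impossible for $k \geq 1$, so $c(\alpha,\Gamma_f, L) < c(\alpha\cup\beta, \Gamma_f, L)$. I would instead aim for a localized, quantitative version. If $\beta$ vanishes near $\Gamma_f \cap U$ and all relevant critical points lie in $U$, then the two values must differ, and this difference should persist in the $\gamma$-limit.

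\textbf{Step 2: the deformation argument in the completion.} Choose a sequence $L_n \in \mathfrak{L}_0(T^*N)$ with $L_n \to L_\infty$. By Lemma \ref{lem:gamSupLimInf}, and after a Hamiltonian correction, I would like the $L_n$ to be eventually contained in $U$. This is the delicate point, because $\gamma$-convergence does not imply Hausdorff convergence. A cleaner route avoids it. Pick a Hamiltonian $H$ with $H \equiv a$ near $B_\infty$ and supported in a compact set. Lemma \ref{lem:hamTraCom} then gives $\varphi^t_H(\tilde L_\infty) = T_{ta}\tilde L_\infty$. I would exploit this in the form of a spectral ``localization'': spectral invariants $c(\gamma, \Gamma_f, L_\infty)$ only see critical values coming from $B_\infty \cap \Gamma_f$.

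\textbf{Step 3: apply the invariants.} Use invariants $c(\alpha, \Gamma_{tf}, L_\infty)$ for a family $f$. These are continuous in $t$ and Lipschitz in $L$, so they pass to the completion. Choose $f$ with $\beta$ killed on $U$ and a Morse function adapted so that $\Gamma_{tf} \cap U$ carries the cohomology. The Lusternik--Schnirelmann argument, transferred through the generating-function sublevel sets and restricted to the neighborhood $U$, should then force $c(\alpha\cup\beta,\Gamma_{tf},L_\infty) = c(\alpha,\Gamma_{tf},L_\infty)$ for all $t$. On the other hand, for $t$ large, $\Gamma_{tf}$ dominates and the invariants approach $t\,c(\alpha\cup\beta,f) > t\,c(\alpha,f)$, which gives a contradiction.

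\textbf{Main obstacle.} The hardest step is making Step 3 rigorous. The reduction ``the cup product with $\beta$ is seen only on $\gamma$-supp'' requires a sheaf-theoretic or Floer-theoretic localization, namely a version of Theorem \ref{thm:lusSch}$(ii)$ valid for elements of the completion. I would first try to prove it for smooth $L_n$ with uniform estimates, using the deformation of Theorem \ref{thm:lusSch}$(ii)$ on $U\cup f^{c-\varepsilon}$. I would then pass to the limit using the $1$-Lipschitz property of $c$.
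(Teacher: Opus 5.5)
There is a genuine gap, and it goes beyond the technical localization you flag as the main obstacle: Step 3 runs Theorem \ref{thm:lusSch} backwards. Part (ii) says that an \emph{equality} $c(\alpha\cup\beta)=c(\alpha)$ forces $\beta$ to be nonzero near the critical set. So if $\beta$ vanished near all relevant critical points, Lusternik--Schnirelmann would give the \emph{strict} inequality $c(\alpha\cup\beta,\Gamma_{tf},L_\infty)>c(\alpha,\Gamma_{tf},L_\infty)$, exactly as in your own Step 1. That is also what happens for large $t$, so no contradiction arises. Nothing in your proposal ever produces an equality between the invariants of two classes of different degree. Step 1 has a related defect: the strict inequality you obtain for smooth approximants $L_n$ does not pass to the $\gamma$-limit, since strict inequalities only survive as non-strict ones, and in the completion equality genuinely occurs. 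Finally, there is no version of Theorem \ref{thm:lusSch} for $L_\infty$ itself, which has no generating function.

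The missing idea is that the equality comes for free from locality of the $\gamma$-support, and Lusternik--Schnirelmann is then applied not to $L_\infty$ but to a smooth function on $T^*N$. Given $U\supset B_\infty$, take a closed neighbourhood $V\subset U$ of $B_\infty$ and a $C^2$-small, compactly supported $H$ with $H\equiv-\varepsilon$ on $V$, $H>-\varepsilon$ off $V$ and $H=0$ off $U$. Lemma \ref{lem:hamTraCom} gives $\varphi_H(\tilde L_\infty)=T_{-\varepsilon}\tilde L_\infty$. Hence $c(\alpha,L_\infty,\varphi_H(L_\infty))=c(\alpha,L_\infty,L_\infty)-\varepsilon=-\varepsilon=\min H$ for \emph{every} $\alpha$. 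The bridge to a smooth object is the comparison $c(\alpha,\varphi_H)\le c(\alpha,L,\varphi_H(L))$ between Hamiltonian Floer spectral invariants and Lagrangian ones (Monzner--Vichery--Zapolsky; Arnaud--Humili\`ere--Viterbo, Lemma 6.1). This is combined with $c(\alpha,\varphi_H)=c(\alpha,H)$ for $C^2$-small autonomous $H$. Together they yield $c(\alpha,H)\le\min H$, so $c(\alpha,H)=c(1,H)=\min H$. Lusternik--Schnirelmann for the genuine function $H$ then shows that $\alpha$ is nonzero on every neighbourhood of $H^{-1}(-\varepsilon)=V$, in particular on $U$. Your Step 2 already invokes the translation lemma. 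What is missing is using it to compute the Lagrangian invariant exactly, and the Hamiltonian--Lagrangian inequality that transfers that computation to $H$.
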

\begin{proof}
We need to prove that for any open neighborhood $U$ of $B_{\infty}(\psi)$, the map $H^{\ell}(N) \to H^{\ell}(U)$ is injective for any integer $\ell \ge 0$. We may assume without loss of generality that the $\textnormal{cl}(U)$ is compact. Moreover, since this map sends the class $1 \in H
^0(N)$ to $1 \in H^0(U)$, we may assume that $\ell > 0$. \\
Let $V$ be a closed neighborhood of $B_{\infty}(\psi)$ contained in $U$. Define a Hamiltonian function $H \in C^\infty_c(T^*N; \R)$ satisfying the following properties for a small $\varepsilon>0$:
\begin{enumerate}
\item $H$ is $C^2$-small.
\item $H$ is equal to $-\varepsilon$ on $V$.
\item $H > -\varepsilon$ in $T^*N\setminus V$.
\item $H = 0$ in $T^*N\setminus U$.
\end{enumerate}
In order to prove the theorem, given $\alpha \in H^{\ell}(N)$ a non-zero cohomology class of positive degree, we need to verify that it does not vanish on $U$. Let $c(\alpha, \varphi_H)$ be the Hamiltonian spectral invariant of $\varphi_H$, defined by using Hamiltonian Floer theory, see \cite{FS07} and also \cite{Lan16}. This invariant is related to the spectral invariant of Definition \ref{serve in ultima} by the next inequality, see \cite[Proposition 2.14]{monVicZap12} and also \cite[Lemma 6.1]{arnHumVit24} for its cohomological counterpart:
    \begin{equation}\label{open closed}
        c(\alpha, \varphi_H)\leq c(\alpha, L, \varphi_H(L))\, .
    \end{equation}
Moreover, since $H$ is sufficiently $C^2$-small and autonomous, and the only periodic orbits of $\varphi_H$ are fixed throughout the isotopy and have action $-\varepsilon$ and $0$, it holds that 
\begin{equation} \label{morse spectral}
c(\alpha, \varphi_H) = c(\alpha,H)\, ,
\end{equation}
where $c(\alpha,H)$ denotes the Morse theoretic spectral invariant of $H$, we refer to \cite[Page 29]{arnHumVit24} for its detailed definition. \\
Now we implicitly pass to a brane of $L$ and use Lemma \ref{lem:hamTraCom}. Since $\varphi_H(L)=T_{-\varepsilon}L$, we have that
\begin{equation}\label{eqn:minH}
c(\alpha, L,\varphi_H(L))=c(\alpha, L, T_{-\varepsilon}L)=c(\alpha, L, L)-\varepsilon=-\varepsilon=\min H\, .
\end{equation}
By using (\ref{open closed}), (\ref{morse spectral}) and (\ref{eqn:minH}), we then obtain that
$$c(\alpha,H) \le \min H\, .$$
Recalling that $c(\alpha,H) \in \{0,\min H\}$, the unique possibility is that $c(\alpha,H) = \min H$. Since $c(1,H) = \min H$, we then obtain that $c(\alpha,H) = c(1,H)$. Applying Lusternik-Schnirelman theory --see \cite[Lemma 6.1]{arnHumVit24} and also Theorem \ref{thm:lusSch}-- we find that $\alpha$ induces a non-zero class in every neighborhood of $H^{-1}(-\varepsilon)$. In particular, $\alpha$ induces a non-zero class in $U$, which concludes the proof of the injectivity of the map $H^{\ell}(N) \to H^{\ell}(U)$.
\end{proof}
\noindent We notice that the following question remains open: Is $B_{\infty}$ always connected? This was recently answered positively in the case of a cotangent bundle in \cite{AGIV24}. It is also proved that $\gamma\textnormal{-supp}(L)$ does not have to be connected when non-compact. Finally, also questions regarding the dynamical complexity of $B_{\infty}$ are open and need to be exploited. \\
\indent To conclude this section, we remind that a natural dissipative system is the one associated to the discounted Hamilton-Jacobi equation:
$$\alpha u(x) + H(x,du(x)) = 0, \qquad \alpha > 0\, .$$
In fact, the corresponding time-1 map $\varphi^1_{H,\alpha}$ is conformally symplectic with conformal ratio $a = e^{-\alpha}$. In \cite[Theorem 1.5]{arnHumVit24} it is proved that, for a Tonelli Hamiltonian $H$ on $T^*N$, the graph of the corresponding viscosity solution $(x,du_{H,\alpha}(x))$ belongs to $B_{\infty}(\varphi^1_{H,\alpha})$. Moreover, in the Appendix A of the same paper, M. Zavidovique constructs an example of a Hamiltonian (non Tonelli) for which the conclusion of the above theorem does not hold, as well as an example of a
Tonelli Hamiltonian for which the generalized Birkhoff attractor of $\varphi^1_{H,\alpha}$ is strictly larger than the closure of $\bigcup_{t \in \mathbb{R}} \varphi^t_{H,\alpha} (\textnormal{Graph}(du_{H,\alpha}))$.

\section{Concluding remarks}
We saw in the specific example of conformally symplectic maps how the introduction of the Humilière completion of the space of exact Lagrangian submanifolds has deep and far reaching applications to the existence of invariants subsets and their properties that could only be previously evidenced in dimension $2$. This Humilière completion, together with its Hamiltonian counterpart, the completion $\widehat{\mathrm{DHam}}(T^*N)$ have many other applications, and we are convinced that many more are just waiting to be found. 

\begin{enumerate}
\item One can show via the methods we described here that the action of $\widehat{\mathrm{DHam}}(T^*N)$ is transitive on  $\mathfrak{L}(T^*N)$. In fact, the next weak version of the Nearby Lagrangian Conjecture holds:
\begin{thm}
Let $L,L'$ be two closed exact Lagrangians in $T^*N$. Then there exists $\psi \in \widehat{\mathrm{DHam}}(T^*N)$ such that $\psi(L) = L'$.  
\end{thm}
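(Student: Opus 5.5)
The plan is to realise $L'$ as the $\gamma$-limit of images of $L$ under a Cauchy sequence of Hamiltonian diffeomorphisms. The ingredients are the results of Section \ref{SPEC} and Section \ref{sezione 4}, together with the known fact (Abouzaid–Kragh) that any closed exact Lagrangian $L\subset T^*N$ is simple homotopy equivalent to the zero section via the projection.

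\textbf{Step 1: reduce to $L=\mathcal{O}_N$.} The relation ``there is $\psi\in\widehat{\mathrm{DHam}}(T^*N)$ with $\psi(L)=L'$'' is an equivalence relation, since $\widehat{\mathrm{DHam}}(T^*N)$ is a group acting on the completions. So it suffices to send $\mathcal{O}_N$ onto an arbitrary closed exact $L'$.

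\textbf{Step 2: shrink $L'$ towards the zero section.} Use the Liouville flow $\phi^t$ (fibrewise rescaling $p\mapsto e^{-t}p$). It is CES with ratio $e^{-t}$, and $L'_t:=\phi^t(L')$ is exact and converges in Hausdorff distance to a subset of $\mathcal{O}_N$. By Proposition \ref{conformal invariance phi}, spectral quantities for pairs scale by $e^{-t}$. The family $t\mapsto L'_t$ is an exact Lagrangian isotopy, hence it is generated by a Hamiltonian isotopy $\varphi^t$ (exactness lets us integrate the flux-free deformation). Fix $T$ large and set $\Lambda:=L'_T$, which lies in a tiny neighbourhood $D_\delta T^*N$, $\delta=e^{-T}$. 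Then $L'=(\varphi^{T})^{-1}(\Lambda)$ with $\varphi^T\in\mathrm{DHam}$. So it suffices to reach $\Lambda$ from $\mathcal{O}_N$ by an element of $\widehat{\mathrm{DHam}}$.

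\textbf{Step 3: approximate $\Lambda$ by Hamiltonian images of the zero section.} Working inside $D_\delta T^*N$, the goal is $\psi_\delta\in\mathrm{DHam}_c$ with $\psi_\delta(\mathcal{O}_N)$ and $\Lambda$ lying in a common $\delta$-small region. Then every Hamiltonian of the form used in the transport has oscillation $O(\delta)$, giving $\gamma$-norm $O(\delta)$ by $\gamma(\varphi)\le\mathrm{osc}(H)$. One way to organise this is to take
\[
\psi_k:=(\varphi^{T_k})^{-1}\circ\chi_k,
\]
where $\chi_k$ is a compactly supported Hamiltonian diffeomorphism supported in $D_{e^{-T_k}}T^*N$ that maps $\mathcal{O}_N$ to something $\gamma$-close to $L'_{T_k}$. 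The sequence $(\psi_k)$ should then be Cauchy for $\gamma$, because consecutive terms differ by rescaled maps of oscillation $\lesssim e^{-T_k}$. Its limit $\psi\in\widehat{\mathrm{DHam}}(T^*N)$ satisfies $\psi(\mathcal{O}_N)=\lim\psi_k(\mathcal{O}_N)$ in $\widehat{\mathfrak{L}}$. Finally, identify this limit with $L'$ using Lemma \ref{lem:gamSupLimInf} and the result of \cite{AGIV24}: its $\gamma$-support is the smooth exact Lagrangian $L'$, so the limit equals $L'$.

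\textbf{Main obstacle.} The hard part is Step 3, namely producing the $\chi_k$. We are not allowed to assume the Nearby Lagrangian Conjecture, so $\Lambda$ need not be Hamiltonian isotopic to $\mathcal{O}_N$. What is needed is only $\gamma$-closeness. To get it, I would use the fact that in $D_\delta T^*N$ all Hamiltonians of bounded support have oscillation $\le C\delta$ after composing with the conformal rescaling. This suggests that $\gamma$-distances between any two exact Lagrangians inside $D_\delta T^*N$ are $O(\delta)$: compare both to $\mathcal{O}_N$ via GF-homology and the Floer-theoretic Abouzaid–Kragh equivalence, whose action spectra are $O(\delta)$. So $\gamma(\mathcal{O}_N,\Lambda)=O(\delta)$ in a suitably extended sense, and one can then pick a chain of Hamiltonian diffeomorphisms realising this. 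Making this extension of $\gamma$ to non-GFQI Lagrangians rigorous, via Floer theory instead of generating functions, is where I expect the real work to lie.
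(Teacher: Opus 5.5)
There is a genuine gap, and it sits in Step~3, which is where the content of the theorem lies. First, the maps $\chi_k$ are never produced. Moving $\mathcal{O}_N$ onto $\Lambda$ by Hamiltonians of oscillation $O(\delta)$ would require a Hamiltonian isotopy between them inside $D_\delta T^*N$, which is the nearby Lagrangian problem again. And ``pick a chain of Hamiltonian diffeomorphisms realising'' the bound $\gamma(\mathcal{O}_N,\Lambda)=O(\delta)$ is not a construction. Second, the Cauchy property of $(\psi_k)$ is asserted for the wrong reason. The map $\psi_k^{-1}\psi_{k+1}$ is conjugate to $\varphi^{T_k}(\varphi^{T_{k+1}})^{-1}\circ\chi_{k+1}\chi_k^{-1}$. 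Its first factor, the piece of the shrinking isotopy between times $T_k$ and $T_{k+1}$, is not a rescaled map. You never estimate its $\gamma$-norm, yet everything hinges on that estimate. Third, the identification of the limit fails as stated. Lemma~\ref{lem:gamSupLimInf} only places the $\gamma$-support inside the lower limit of the sets $\psi_k(\mathcal{O}_N)=(\varphi^{T_k})^{-1}\chi_k(\mathcal{O}_N)$. You do not control these sets, since $(\varphi^{T_k})^{-1}$ is only controlled on $L'_{T_k}$. So you never obtain that the support equals $L'$, which is what an appeal to \cite{AGIV24} would require.

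The missing observation is that no $\chi_k$ is needed. Once an extended $\gamma$ is available, $\chi_k=\mathrm{id}$ already meets your requirement, and the real issue is the Cauchy estimate, which Step~2 already supplies. The isotopy $L'_t=\phi^t(L')$ only constrains the generating Hamiltonian on $L'_t$, where it must be a primitive of $\lambda$, namely $e^{-t}f_{L'}\circ\phi^{-t}$ up to a constant. Extending it by a cutoff in a thin tubular neighbourhood gives compactly supported $H_t$ with $\mathrm{osc}(H_t)\le e^{-t}\,\mathrm{osc}(f_{L'})$. Hence $\gamma(\varphi^{T'}(\varphi^{T})^{-1})\le\int_T^{T'}\mathrm{osc}(H_t)\,dt\le e^{-T}\,\mathrm{osc}(f_{L'})$, so $(\varphi^{T})$ is itself $\gamma$-Cauchy and converges to some $\varphi^\infty\in\widehat{\mathrm{DHam}}(T^*N)$.

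To identify $(\varphi^\infty)^{-1}(\mathcal{O}_N)$ with $L'$, use a Floer- or sheaf-theoretic spectral distance defined for all pairs of closed exact Lagrangians. It exists because $L'$ is Floer-theoretically equivalent to $\mathcal{O}_N$; this is where the Abouzaid--Kragh type input genuinely enters. You need such an extension anyway to give $\psi(L)=L'$ a meaning when $L'$ is not known to lie in $\mathfrak{L}_0(T^*N)$. This distance is Hamiltonian invariant and scales by $e^{-T}$ under $\phi^T$, so $\gamma((\varphi^{T})^{-1}(\mathcal{O}_N),L')=\gamma(\mathcal{O}_N,\phi^T(L'))=e^{-T}\gamma(\mathcal{O}_N,L')\to 0$.

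This conformal-contraction mechanism, which turns the Liouville dilation into a $\gamma$-Cauchy family of compactly supported Hamiltonian diffeomorphisms, is what the notes mean by ``the methods we described here'' when deferring to \cite[Section 7]{arnHumVit24}; the notes themselves give no proof. Neither the approximation of $\Lambda$ by Hamiltonian images of $\mathcal{O}_N$ nor the $\gamma$-support argument is needed.
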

We refer to \cite[Section 7]{arnHumVit24} for all details. Remind that the Nearby Lagrangian Conjecture states that the restriction of the action to $\mathrm{DHam}(T^*N)$ is still transitive.
\item It would be interesting to describe the elements in $\widehat{\mathrm{DHam}}(T^*N)$. So far, we know that the $C^0$-closure of $\mathrm{DHam}(T^*N)$ is contained in $\widehat{\mathrm{DHam}}(T^*N)$, just like the flows of continuous Hamiltonian functions which are discontinuous at submanifolds of codimension at least 1, and Hamiltonians on the cotangent of a torus obtained via homogenization: if $H \in C^\infty(T^*\mathbb{T}^n)$, define $H_k(q, p):=H(kq, p)$ and the Hamiltonian diffeomorphisms generated by $H_k$ admit a $\gamma$-limit in $\widehat{\mathrm{DHam}}(T^*N)$. We refer to \cite{SymplHo} for an exhaustive discussion of this topic. 
\item For a Liouville manifold, the Liouville vector field defines a Liouville flow that has an attractor (see \cite{Biran-Skeleta}) which --if the manifold is Weinstein-- is a Lagrangian piecewiselinear subset. Then we have as many (generalized) Birkhoff attractors as there are equivalence classes (for the Floer-Fukaya equivalence relation) of Lagrangians, and these Birkhoff attractors are contained in the skeleton, which allows one in some cases to compute the Floer-Fukaya equivalence classes. 
\end{enumerate}
\printbibliography

@article{Biran-Skeleta,
	author = {Paul Biran},
	journal = {Geometric and Functional Analysis},
	pages = {279-326},
	title = {Lagrangian non-intersections},
	volume = {16},
	year = {2006}}

@book{Bott-Tu,
	author = {Bott, R. and Tu, L.},
	publisher = {Springer-Verlag},
	series = {GTM 82},
	title = {Differential forms in Algebraic Topology},
	year = {1982}}

@book{Spanier,
	author = {Spanier, Edwin},
	publisher = {Springer-Verlag},
	title = {Algebraic topology},
	year = {1966}}

@article{Lalonde-Sikorav,
	author = {Lalonde, Fran{\c c}ois and Sikorav, Jean-Claude},
	journal = {Commentarii Mathematici Helvetici},
	pages = {18-33},
	title = {Sous-vari{\'e}t{\'e}s lagrangiennes et lagrangiennes exactes des fibr{\'e}s cotangents},
	volume = {66},
	year = {1991}}

@misc{Viterbo1995b,
	author = {Viterbo, Claude},
	eprint = {1805.01316},
	eprinttype = {arxiv},
	primaryclass = {math.SG},
	title = {Functors and Computations in Floer cohomology, II}}

@article{Abouzaid-Courte-Guillermou-Kragh,
	author = {Mohammed Abouzaid and Sylvain Courte and St{\'e}phane Guillermou and Thomas Kragh},
	journal = {Duke Mathematical Journal},
	number = {5},
	pages = {949-1011},
	title = {Twisted generating functions and the nearby Lagrangian conjecture},
	volume = {174},
	year = {2025}}

@misc{ACLS,
	author = {Marcelo S. Atallah and Jean-Philippe Chass{\'e} and R{\'e}mi Leclercq and Egor Shelukhin},
	eprint = {2410.04158},
	eprinttype = {arXiv},
	primaryclass = {math.SG},
	title = {Weinstein exactness of nearby Lagrangians and related questions},
	year = {2024}}

@article{arnFéj,
 author = {Arnaud, Marie-Claude and Fejoz, Jacques},
 title = {Invariant submanifolds of conformal symplectic dynamics},
 fjournal = {Journal de l'{\'E}cole Polytechnique -- Math{\'e}matiques},
 journal = {J. {\'E}c. Polytech., Math.},
 issn = {2429-7100},
 volume = {11},
 pages = {159--185},
 year = {2024},
 language = {English},
 doi = {10.5802/jep.252},
 zbMATH = {7811891},
 Zbl = {1541.37062}
}

@book{mcDSal16,
 author = {McDuff, Dusa and Salamon, Dietmar},
 title = {Introduction to symplectic topology},
 edition = {3rd edition},
 fseries = {Oxford Graduate Texts in Mathematics},
 series = {Oxf. Grad. Texts Math.},
 volume = {27},
 isbn = {978-0-19-879489-9; 978-0-19-879490-5},
 year = {2016},
 publisher = {Oxford: Oxford University Press},
 language = {English},
 zbMATH = {6638013},
 Zbl = {1380.53003}
}

@article{the99,
 author = {Th{\'e}ret, David},
 title = {A complete proof of {Viterbo}'s uniqueness theorem on generating functions},
 fjournal = {Topology and its Applications},
 journal = {Topology Appl.},
 issn = {0166-8641},
 volume = {96},
 number = {3},
 pages = {249--266},
 year = {1999},
 language = {English},
 doi = {10.1016/S0166-8641(98)00049-2},
 zbMATH = {1340387},
 Zbl = {0952.53037}
}

@article{vit92,
 author = {Viterbo, Claude},
 title = {Symplectic topology as the geometry of generating functions},
 fjournal = {Mathematische Annalen},
 journal = {Math. Ann.},
 issn = {0025-5831},
 volume = {292},
 number = {4},
 pages = {685--710},
 year = {1992},
 language = {English},
 doi = {10.1007/BF01444643},
 url = {https://eudml.org/doc/164936},
 zbMATH = {4215961},
 Zbl = {0735.58019}
}

@book{mil63,
 author = {Milnor, John W.},
 title = {Morse theory. {Based} on lecture notes by {M}. {Spivak} and {R}. {Wells}},
 fseries = {Annals of Mathematics Studies},
 series = {Ann. Math. Stud.},
 volume = {51},
 year = {1963},
 publisher = {Princeton University Press, Princeton, NJ},
 language = {English},
 doi = {10.1515/9781400881802},
 zbMATH = {3176330},
 Zbl = {0108.10401}
}

@book{spa95,
 author = {Spanier, Edwin H.},
 title = {Algebraic topology},
 isbn = {0-387-94426-5},
 year = {1995},
 publisher = {Berlin: Springer-Verlag},
 language = {English},
 zbMATH = {715014},
 Zbl = {0810.55001}
}

@article{milOh97,
 author = {Milinkovi{\'c}, Darko and Oh, Yong-Geun},
 title = {Floer homology as the stable {Morse} homology},
 fjournal = {Journal of the Korean Mathematical Society},
 journal = {J. Korean Math. Soc.},
 issn = {0304-9914},
 volume = {34},
 number = {4},
 pages = {1065--1087},
 year = {1997},
 language = {English},
 zbMATH = {1139387},
 Zbl = {0921.58019}
}

@book{hum08,
 author = {Humili{\`e}re, Vincent},
 title = {Continuit{\'e} en topologie symplectique},
 year = {2008},
 publisher = {Paris: {\'E}cole Polytechnique (Diss.)},
 language = {French},
 zbMATH = {7024031},
 Zbl = {1406.37002}
}

@misc{vic13,
 author = {Vichery, Nicolas},
 title = {Homological differential calculus},
 year = {2013},
 howpublished = {Preprint, {arXiv}:1310.4845 [math.{AT}] (2013)},
 url = {https://arxiv.org/abs/1310.4845},
 arXiv = {arXiv:1310.4845}
}

@article{ban78,
 author = {Banyaga, Augustin},
 title = {Sur la structure du groupe des diff{\'e}omorphismes qui preservent une forme symplectique},
 fjournal = {Commentarii Mathematici Helvetici},
 journal = {Comment. Math. Helv.},
 issn = {0010-2571},
 volume = {53},
 pages = {174--227},
 year = {1978},
 language = {French},
 doi = {10.1007/BF02566074},
 url = {https://eudml.org/doc/139732},
 zbMATH = {3610489},
 Zbl = {0393.58007}
}

@misc{vit22,
 author = {Viterbo, Claude},
 title = {On the supports in the {Humili{\`e}re} completion and ${{\gamma}}$-coisotropic sets},
 year = {2022},
 howpublished = {Preprint, {arXiv}:2204.04133 [math.{SG}] (2022)},
 url = {https://arxiv.org/abs/2204.04133},
 arXiv = {arXiv:2204.04133}
}

@misc{arnHumVit24,
 author = {Arnaud, Marie-Claude and Humili{\`e}re, Vincent and Viterbo, Claude},
 title = {Higher {Dimensional} {Birkhoff} attractors (with an appendix by {Maxime} {Zavidovique})},
 year = {2024},
 howpublished = {Preprint, {arXiv}:2404.00804 [math.{SG}] (2024)},
 url = {https://arxiv.org/abs/2404.00804},
 arXiv = {arXiv:2404.00804}
}

@article{cha34,
 author = {Charpentier, M.},
 title = {Sur quelques propri{\'e}t{\'e}s des courbes de {M}. {Birkhoff}},
 fjournal = {Bulletin de la Soci{\'e}t{\'e} Math{\'e}matique de France},
 journal = {Bull. Soc. Math. Fr.},
 issn = {0037-9484},
 volume = {62},
 pages = {193--224},
 year = {1934},
 language = {French},
 doi = {10.24033/bsmf.1221},
 url = {https://eudml.org/doc/86632},
 zbMATH = {3015894},
 Zbl = {0010.37701}
}

@article{monVicZap12,
 author = {Monzner, Alexandra and Vichery, Nicolas and Zapolsky, Frol},
 title = {Partial quasimorphisms and quasistates on cotangent bundles, and symplectic homogenization},
 fjournal = {Journal of Modern Dynamics},
 journal = {J. Mod. Dyn.},
 issn = {1930-5311},
 volume = {6},
 number = {2},
 pages = {205--249},
 year = {2012},
 language = {English},
 doi = {10.3934/jmd.2012.6.205},
 zbMATH = {6094024},
 Zbl = {1258.53091}
}

@article{Masl,
 author = {Maslov, Viktor Pavlovitch},
 title = {Théorie des perturbations et méthodes asymptotiques / V.P. Maslov ; suivi de deux notes complémentaires de V.I. Arnol'd et V.C. Bouslaev ; traduit par J. Lascoux,... R. Seneor,... ; préface de J. Leray,...},
 fjournal = {},
 journal = {},
 issn = {},
 volume = {},
 number = {},
 pages = {384pp},
 year = {1972},
 language = {French},
 doi = {},
 zbMATH = {},
 Zbl = {}
}

@article {Hor,
    AUTHOR = {H{\"o}rmander, Lars},
     TITLE = {Fourier integral operators. {I}},
   JOURNAL = {Acta Math.},
  FJOURNAL = {Acta Mathematica},
    VOLUME = {127},
      YEAR = {1971},
    NUMBER = {1-2},
     PAGES = {79--183},
      ISSN = {0001-5962,1871-2509},
   MRCLASS = {58G15 (35S05 47G05)},
  MRNUMBER = {388463},
MRREVIEWER = {Yu.\ V.\ Egorov},
       DOI = {10.1007/BF02392052},
       URL = {https://doi.org/10.1007/BF02392052},
}

@book {Car,
    AUTHOR = {Cardin, Franco},
     TITLE = {Elementary symplectic topology and mechanics},
    SERIES = {Lecture Notes of the Unione Matematica Italiana},
    VOLUME = {16},
 PUBLISHER = {Springer, Cham},
      YEAR = {2015},
     PAGES = {xviii+222},
      ISBN = {978-3-319-11025-7; 978-3-319-11026-4},
   MRCLASS = {70G45 (35F21 35Q41 37J05 58E05 70H03 70H05)},
  MRNUMBER = {3289033},
MRREVIEWER = {Maria\ Letizia\ Bertotti},
       DOI = {10.1007/978-3-319-11026-4},
       URL = {https://doi.org/10.1007/978-3-319-11026-4},
}

@article {Theret,
    AUTHOR = {Th\'eret, David},
     TITLE = {A complete proof of {V}iterbo's uniqueness theorem on
              generating functions},
   JOURNAL = {Topology Appl.},
  FJOURNAL = {Topology and its Applications},
    VOLUME = {96},
      YEAR = {1999},
    NUMBER = {3},
     PAGES = {249--266},
      ISSN = {0166-8641,1879-3207},
   MRCLASS = {53D12 (57R10 57R40 57R52)},
  MRNUMBER = {1709692},
MRREVIEWER = {Meike\ Akveld},
       DOI = {10.1016/S0166-8641(98)00049-2},
       URL = {https://doi.org/10.1016/S0166-8641(98)00049-2},
}

@article {ACZ,
    AUTHOR = {Conley, C. C. and Zehnder, E.},
     TITLE = {The {B}irkhoff-{L}ewis fixed point theorem and a conjecture of
              {V}. {I}. {A}rnol'd},
   JOURNAL = {Invent. Math.},
  FJOURNAL = {Inventiones Mathematicae},
    VOLUME = {73},
      YEAR = {1983},
    NUMBER = {1},
     PAGES = {33--49},
      ISSN = {0020-9910,1432-1297},
   MRCLASS = {58F05 (58C30)},
  MRNUMBER = {707347},
MRREVIEWER = {Yu.\ E.\ Gliklikh},
       DOI = {10.1007/BF01393824},
       URL = {https://doi.org/10.1007/BF01393824},
}

@incollection {Cha1,
    AUTHOR = {Chaperon, Marc},
     TITLE = {Quelques questions de g\'eom\'etrie symplectique (d'apr\`es,
              entre autres, {P}oincar\'e, {A}rnol'd, {C}onley et
              {Z}ehnder)},
 BOOKTITLE = {Bourbaki seminar, {V}ol. 1982/83},
    SERIES = {Ast\'erisque},
    VOLUME = {105-106},
     PAGES = {231--249},
 PUBLISHER = {Soc. Math. France, Paris},
      YEAR = {1983},
   MRCLASS = {58F05 (58-02)},
  MRNUMBER = {728991},
MRREVIEWER = {Richard\ C.\ Swanson},
}

@article {Cha2,
    AUTHOR = {Chaperon, Marc},
     TITLE = {Une id\'ee du type ``g\'eod\'esiques bris\'ees'' pour les
              syst\`emes hamiltoniens},
   JOURNAL = {C. R. Acad. Sci. Paris S\'er. I Math.},
  FJOURNAL = {Comptes Rendus des S\'eances de l'Acad\'emie des Sciences.
              S\'erie I. Math\'ematique},
    VOLUME = {298},
      YEAR = {1984},
    NUMBER = {13},
     PAGES = {293--296},
      ISSN = {0249-6291},
   MRCLASS = {58F05 (53C15)},
  MRNUMBER = {765426},
MRREVIEWER = {Karsten\ Grove},
}

@book {Wei,
    AUTHOR = {Weinstein, Alan},
     TITLE = {Lectures on symplectic manifolds},
    SERIES = {CBMS Regional Conference Series in Mathematics},
    VOLUME = {29},
      NOTE = {Corrected reprint},
 PUBLISHER = {American Mathematical Society, Providence, RI},
      YEAR = {1979},
     PAGES = {ii+48},
      ISBN = {0-8218-1679-9},
   MRCLASS = {58F05 (47G05 53C15 70Hxx)},
  MRNUMBER = {598470},
}

@article {Sik,
    AUTHOR = {Sikorav, Jean-Claude},
     TITLE = {Sur les immersions lagrangiennes dans un fibr\'e{} cotangent
              admettant une phase g\'en\'eratrice globale},
   JOURNAL = {C. R. Acad. Sci. Paris S\'er. I Math.},
  FJOURNAL = {Comptes Rendus des S\'eances de l'Acad\'emie des Sciences.
              S\'erie I. Math\'ematique},
    VOLUME = {302},
      YEAR = {1986},
    NUMBER = {3},
     PAGES = {119--122},
      ISSN = {0249-6291},
   MRCLASS = {58F05},
  MRNUMBER = {830282},
MRREVIEWER = {Mich\`ele\ Audin},
}

@article {Tray,
    AUTHOR = {Traynor, Lisa},
     TITLE = {Symplectic homology via generating functions},
   JOURNAL = {Geom. Funct. Anal.},
  FJOURNAL = {Geometric and Functional Analysis},
    VOLUME = {4},
      YEAR = {1994},
    NUMBER = {6},
     PAGES = {718--748},
      ISSN = {1016-443X,1420-8970},
   MRCLASS = {58E05 (58F05)},
  MRNUMBER = {1302337},
MRREVIEWER = {Karl\ Friedrich\ Siburg},
       DOI = {10.1007/BF01896659},
       URL = {https://doi.org/10.1007/BF01896659},
}

@article{carVit08,
 author = {Cardin, Franco and Viterbo, Claude},
 title = {Commuting {Hamiltonians} and {Hamilton}-{Jacobi} multi-time equations},
 fjournal = {Duke Mathematical Journal},
 journal = {Duke Math. J.},
 issn = {0012-7094},
 volume = {144},
 number = {2},
 pages = {235--284},
 year = {2008},
 language = {English},
 doi = {10.1215/00127094-2008-036},
 zbMATH = {5317180},
 Zbl = {1153.37029}
}

@article{ush19,
 author = {Usher, Michael},
 title = {Local rigidity, symplectic homeomorphisms, and coisotropic submanifolds},
 fjournal = {Bulletin of the London Mathematical Society},
 journal = {Bull. Lond. Math. Soc.},
 issn = {0024-6093},
 volume = {54},
 number = {1},
 pages = {45--53},
 year = {2022},
 language = {English},
 doi = {10.1112/blms.12555},
 zbMATH = {7729747},
 Zbl = {1523.53083}
}

@article{AGHIV23,
 author = {Asano, Tomohiro and Guillermou, St{\'e}phane and Humili{\`e}re, Vincent and Ike, Yuichi and Viterbo, Claude},
 title = {The {{\(\gamma\)}}-support as a micro-support},
 fjournal = {Comptes Rendus. Math{\'e}matique. Acad{\'e}mie des Sciences, Paris},
 journal = {C. R., Math., Acad. Sci. Paris},
 issn = {1631-073X},
 volume = {361},
 pages = {1333--1340},
 year = {2023},
 language = {English},
 doi = {10.5802/crmath.499},
 zbMATH = {7811799},
 Zbl = {1535.53082}
}

@article{bir32,
 author = {Birkhoff, George D.},
 title = {Sur quelques courbes ferm{\'e}es remarquables},
 fjournal = {Bulletin de la Soci{\'e}t{\'e} Math{\'e}matique de France},
 journal = {Bull. Soc. Math. Fr.},
 issn = {0037-9484},
 volume = {60},
 pages = {1--26},
 year = {1932},
 language = {French},
 doi = {10.24033/bsmf.1182},
 url = {https://eudml.org/doc/86601},
 zbMATH = {3006567},
 Zbl = {0005.22002}
}

@article{lec88,
 author = {Le Calvez, P.},
 title = {Propri{\'e}t{\'e}s des attracteurs de {Birkhoff}. ({Properties} of {Birkhoff} attractors)},
 fjournal = {Ergodic Theory and Dynamical Systems},
 journal = {Ergodic Theory Dyn. Syst.},
 issn = {0143-3857},
 volume = {8},
 number = {2},
 pages = {241--310},
 year = {1988},
 language = {French},
 doi = {10.1017/S0143385700004442},
 zbMATH = {4074001},
 Zbl = {0657.58009}
}

@article{AGIV24,
 author = {Asano, Tomohiro and Guillermou, St{\'e}phane and Ike, Yuichi and Viterbo, Claude},
 title = {Regular {Lagrangians} are smooth {Lagrangians}},
 year = {2025},
journal={J. Math. Soc. Japan},
pages={1-18},
doi={10.2969/jmsj/93799379 },
 howpublished = {Preprint, {arXiv}:2407.00395 [math.{SG}] (2024)},
 url = {https://arxiv.org/abs/2407.00395},
 arXiv = {arXiv:2407.00395}
}

@article{SymplHo,
 author = {Viterbo, C.},
 title = {Symplectic Homogenization},
 fjournal = {Journal de l’École polytechnique — Mathématiques},
 journal = {J. Éco. Pol. — Math.},
 issn = {},
 volume = {10},
 number = {},
 pages = {67--140},
 year = {2013},
 language = {English},
 doi = {10.5802/jep.214},
 zbMATH = {},
 Zbl = {}
}

@article {FS07,
    AUTHOR = {Frauenfelder, Urs and Schlenk, Felix},
     TITLE = {Hamiltonian dynamics on convex symplectic manifolds},
   JOURNAL = {Israel J. Math.},
  FJOURNAL = {Israel Journal of Mathematics},
    VOLUME = {159},
      YEAR = {2007},
     PAGES = {1--56},
      ISSN = {0021-2172,1565-8511},
   MRCLASS = {53D40 (37J05 37J45)},
  MRNUMBER = {2342472},
MRREVIEWER = {Michael\ J.\ Usher},
       DOI = {10.1007/s11856-007-0037-3},
       URL = {https://doi.org/10.1007/s11856-007-0037-3},
}

@article {Lan16,
    AUTHOR = {Lanzat, Sergei},
     TITLE = {Hamiltonian {F}loer homology for compact convex symplectic
              manifolds},
   JOURNAL = {Beitr. Algebra Geom.},
  FJOURNAL = {Beitr\"age zur Algebra und Geometrie. Contributions to Algebra
              and Geometry},
    VOLUME = {57},
      YEAR = {2016},
    NUMBER = {2},
     PAGES = {361--390},
      ISSN = {0138-4821,2191-0383},
   MRCLASS = {53D05 (53D40 53D45)},
  MRNUMBER = {3493995},
MRREVIEWER = {Umberto\ Leone\ Hryniewicz},
       DOI = {10.1007/s13366-015-0254-6},
       URL = {https://doi.org/10.1007/s13366-015-0254-6},
}
\end{document}